\documentclass[ijoc,sglanonrev]{informs5}
\RequirePackage{tgtermes}
\RequirePackage{newtxtext}
\RequirePackage{newtxmath}
\RequirePackage{bm}
\RequirePackage{endnotes}
\OneAndAHalfSpacedXII % Current default line spacing
\usepackage[ruled,linesnumbered]{algorithm2e}
\usepackage{natbib}
 \bibpunct[, ]{(}{)}{,}{a}{}{,}%
 \def\bibfont{\small}%
 \def\BIBand{and}%
\graphicspath{{/images/}}

\usepackage{booktabs}
\usepackage{hyperref}
\usepackage[para]{threeparttable}
\usepackage{mathtools}
\usepackage{bbm}
\usepackage{enumitem}
\usepackage{multirow}
\usepackage[caption=false]{subfig}

\EquationsNumberedThrough    % Default: (1), (2), ...
\TheoremsNumberedThrough     % Preferred (Theorem 1, Lemma 1, Theorem 2)
\ECRepeatTheorems  %  
\MANUSCRIPTNO{IJOC-0001-2026.00}

\DeclarePairedDelimiterX\Set[2]{\lbrace}{\rbrace}{ #1 \,\delimsize| \,\mathopen{} #2 }
\newcommand{\Bs}[1]{\mathbb{#1}} %for upper case mathbb
\newcommand{\Cs}[1]{\mathcal{#1}} %for upper case mathcal
\newcommand{\one}{\mathbf{1}}

\DeclareMathOperator{\dom}{dom}
\DeclareMathOperator{\dist}{dist}
\DeclareMathOperator{\Graph}{Graph}
\newcommand{\e}[1]{\Bs{E} \left[ #1 \right]}
\newcommand{\ep}[2]{\Bs{E}_{#1} \left[ #2 \right]}

\begin{document}
%%%%%%%%%%%%%%%%
% Outcomment only when entries are known. Otherwise leave as is and
%   default values will be used.
%\setcounter{page}{1}
%\VOLUME{00}%
%\NO{0}%
%\MONTH{Xxxxx}% (month or a similar seasonal id)
%\YEAR{0000}% e.g., 2005
%\FIRSTPAGE{000}%
%\LASTPAGE{000}%
%\SHORTYEAR{00}% shortened year (two-digit)
%\ISSUE{0000} %
%\LONGFIRSTPAGE{0001} %
%\DOI{10.1287/xxxx.0000.0000}%
% Author's names for the running heads
% Sample depending on the number of authors;
% \RUNAUTHOR{Jones}
% \RUNAUTHOR{Jones and Wilson}
% \RUNAUTHOR{Jones, Miller, and Wilson}
% \RUNAUTHOR{Jones et al.} % for four or more authors
% Enter authors following the given pattern:
\RUNAUTHOR{Abtahi, Rahimian, and Khademi}
% Title or shortened title suitable for running heads. Sample:
% \RUNTITLE{Predictive Maintenance in Manufacturing}
% Enter the (shortened) title:
\RUNTITLE{Facility Location with Endogenous Demand Learning}
% Full title. Sample:
% \TITLE{Optimal Resource Allocation in Humanitarian Logistics: A Stochastic Programming Approach}
% Enter the full title:
\TITLE{A Two-Stage Stochastic Facility Location Problem with Endogenous Demand Learning}
% Block of authors and their affiliations starts here:
% NOTE: Authors with same affiliation, if the order of authors allows,
%   should be entered in ONE field, separated by a comma.
%   \EMAIL field can be repeated if more than one author
\ARTICLEAUTHORS{%
%\AUTHOR{John Doe,\textsuperscript{a} Jane Smith,\textsuperscript{b}}
%\AFF{\textsuperscript{a}Department of Industrial Engineering, University of XYZ, \EMAIL{john.doe@xyz.edu; \textsuperscript{b}Department of Computer Science, University of ABC, \EMAIL{jane.smith@abc.edu}} 
\AUTHOR{Mahbod Abtahi}
\AFF{Department of Industrial Engineering, Clemson University, Clemson SC 29634, USA  \EMAIL{sabtahi@clemson.edu}}
\AUTHOR{Hamed Rahimian}
\AFF{Department of Industrial Engineering, Clemson University, Clemson SC 29634, USA  \EMAIL{hrahimi@clemson.edu}}
\AUTHOR{Amin Khademi}
\AFF{Department of Industrial Engineering, Clemson University, Clemson SC 29634, USA  \EMAIL{khademi@clemson.edu}}
% Enter all authors
} % end of the block
\ABSTRACT{%
In facility location problems, planners can reduce demand uncertainty by collecting data, but these actions are costly. In this paper, we ask how a planner should jointly decide where to open facilities and invest in demand learning, which comes with a cost. The challenge is that learning changes how future random costs should be 
evaluated, creating a model that is both endogenous and nonlinear in location decisions. 
We model this problem within a two-stage stochastic framework, where both location and learning decisions have to be made here-and-now. 
Under some mild assumptions, we obtain a closed-form reformulation, which further enables a theoretical analysis of the sensitivity of learning decisions to model parameters and design of convergent solution algorithms that alternate between updating location and learning decisions.
Numerical experiments on benchmark instances show that learning is most valuable when initial demand uncertainty is high, learning is effective, and learning costs are low. Moreover, the proposed solution algorithms find high-quality solutions much faster than exhaustive search and a standard commercial nonlinear solver.
These findings show how information-acquisition decisions can be built directly into strategic facility planning. They help decision makers target limited learning resources where better demand information has the greatest value.
}%

%\FUNDING{The authors gratefully acknowledges the support of the U.S. Air Force Office of Scientific Research through grant FA9550-24-1-0241.}
%Supplemental Material:
%Data Ethics & Reproducibility Note:
% Fill in data. If unknown, outcomment the field
\KEYWORDS{Facility location, Two-stage stochastic programming, Demand learning, Nonconvex optimization, Proximal coordinate descent} 
%\HISTORY{Received: Month DD, YYYY; Accepted: Month DD, YYYY; Published Online: Month DD, YYYY}
\maketitle

%%%%%%%%%%%%%%%%%%%%%%%%%%%%%%%%%%%%%%%%%%%%%%%%%
\section{Introduction}
\label{sec: intro}

The facility location problem (FLP) is a fundamental model in operations research for deciding where to open facilities and how to allocate customer demand to them. These decisions arise in logistics and supply chain management, energy and transportation networks, emergency response, and other strategic planning problems \citep{daskin2005facility,melo2009facility}. Because facility location decisions are often costly, long-term, and difficult to reverse, the consequences of poor location choices can persist long after the original plan.
A central difficulty in FLP is that demand is rarely known with certainty at the time location decisions are made, motivating notable work on facility location with demand uncertainty. 
Two-stage stochastic FLP models address this difficulty by selecting facilities before demand is realized and then allocating demand, or taking recourse actions, after uncertainty is revealed \citep{snyder2006facility}. 
Such models usually treat the demand distribution as an exogenous input. In many applications, however, the planner can influence the quality of the demand information available before operations begin. For example, firms may collect local market data, run pilot studies, deploy sensors, purchase forecasts, or conduct targeted surveys before committing to the final location plan. These activities do not eliminate demand uncertainty, but reduce it.

The goal of this work is to propose a model for a facility location problem where the decision maker (DM) can learn about the demand with some effort, and then provide a solution methodology to the model, as well as insight about decisions and optimal costs in the presence of demand learning.
To that end, we consider a setting where the DM is able to purchase signals about the demand with the property that higher quality signals are more costly.
Our modeling approach is inspired by the framework proposed in \citet{artstein1993sensors}, in which a DM can seek more information about an optimization problem through costly ``sensors.''
For example, with an effort of $n$ with a linear cost of $d \times n$ the DM will experience a demand with a variance of $\sigma^2/n$.
Specifically, in our model, in the first stage we introduce a decision for how much costly effort the DM should invest in learning the demand variance in addition to the standard location decisions.
For the second stage, then, the demand distribution depends on how much effort is exerted in the first stage.

\subsection{Contributions}
In this paper, we study the trade-off between sampling effort and higher quality signals in a two-stage stochastic capacitated FLP, which makes the demand distribution endogenous. 
In the first stage, the DM chooses which facilities to open and allocates the sampling effort to each customer. Learning is modeled as a variance-reducing mechanism: demand means remain fixed, while customer-specific sampling efforts reduce the variances of normally distributed demands. In the second stage, demand is served subject to capacity restrictions, and unmet demand incurs a penalty, yielding a recourse structure that is separable across customers.

The main challenge is that learning makes the objective both decision-dependent and nonlinear. Opening facilities changes the recourse function, while sampling decisions change the distribution under which the recourse function is evaluated. A direct treatment of this problem leads to nonconvex mixed-integer programs (MIPs). 
Nonetheless, by exploiting the separability of the recourse function we first derive a closed-form reformulation, which facilitates the analysis of the problem. 
Specifically, under independent normal demands, this reformulation turns the learning problem into a structured nonconvex MIP with separability in the continuous learning decisions, which is amenable to theoretical and numerical analysis (Section~\ref{sec: FL_Learning}). 

Second, we conduct comparative statics analysis for the optimal learning effort and the associated optimal value with respect to the sampling cost and the demand uncertainty (Theorems~\ref{thm:cs_min} and \ref{thm:cs_value}). 
This analysis provides insight about the trade-off between sampling effort and signal quality.
Third, we develop solution algorithms that alternate between updating facility location and learning decisions, in the spirit of Gauss-Seidel methods, also known as {\it coordinate descent} methods (Section~\ref{sec: algorithms}). We use sample average and piecewise-linear approximations to handle the nonlinear terms efficiently (Section~\ref{sec: alg_SAA} and \ref{sec: alg_PWL}).
We also establish theoretical convergence of the proposed solution algorithms and consistency of the set of stationary points and local minimizers (Theorems~\ref{thm:min_convergence}--\ref{thm:pwl_stationary_consistency}).

Finally, we apply our model and analysis to standard benchmarks for FLPs by adapting them to our setting.
The numerical study then examines the value of learning, the comparative statics, and the computational performance of the proposed algorithms. The experiments show how reducing demand uncertainty through learning may be beneficial, and how partial learning may change the benefit. Moreover, it shows that the proposed solution approaches achieve strong performance in both solution quality and computational efficiency compared with enumeration and a commercial nonconvex solver. 

\subsection{Organization}
The remainder of the paper is organized as follows. Section~\ref{sec: lit} provides a review of related work. Section~\ref{sec: problem_statement} presents the problem statement, the learning model, and the closed-form reformulation. Section~\ref{sec: comparativestatics} analyzes how key model parameters affect
the optimal learning decisions and the corresponding optimal objective values. Section~\ref{sec: algorithms} introduces the proposed solution approaches. Section~\ref{sec: results} reports the numerical results, including the value of learning and computational comparisons.
Section~\ref{sec: conclusion} ends with conclusions and directions of future research.
Omitted proofs and all technical definitions from variational analysis are relegated to the Electronic Companion (EC). %can be found in Section \ref{sec: EC_prelim}. 

\noindent {\bf Notation}. 
%For $n \in \Bs{N}$, we refer to the ordered index set $\{1,\dotsc,n\}$ by $[n]$. 
%For any $n \in \Bs{N}$, $\one$ refers a vector of ones of size $n$. 
For a closed set $\Cs{S} \subseteq \Bs{R}^n$, its indicator function is defined as $i_{\Cs{S}}(x)=0$ if $x \in \Cs{S}$ and $+\infty$, otherwise.
For a function $f$, $\partial f (x)$ denotes the subdifferential of $f$.  
%For a function $f(x,y)$, we use $\frac{\partial f}{\partial x}$ and $\frac{\partial^2 f}{\partial x^2}$, and $\frac{\partial^2 f}{\partial x \partial y}$ to denote partial derivatives of $f$. 

\section{Related Work}
\label{sec: lit}

There are two main streams of literature that are related to our study: (i) the literature for learning the demand in facility location, and (ii) multistage stochastic programs with endogenous uncertainty.

\subsection{Demand Learning in Facility Location}

In this study, learning in facility location refers to an information-acquisition decision that improves the distributional description of demand before recourse decisions are made. 
This view is related to the general framework of \citet{artstein1993sensors}, where an information structure, coined as sensors, refines the distribution of uncertain parameters before optimization decisions are evaluated. In facility location, customer demand is a natural target for such learning because both location and allocation decisions depend on the  demand.

Several studies on FLP include mechanisms through which demand information becomes available. 
For example, \citet{bhatti2015alternative} study a two-stage stochastic FLP in which first-stage deployment decisions affect the time required for demand uncertainty to be revealed. 
In that setting, learning is represented by a decision-dependent timing of uncertainty resolution rather than through a decision that directly changes demand distribution or improves its precision.
\citet{ma2024probing} consider a FLP with probing decisions in which selected components of an auxiliary random vector are observed and then used to update the conditional distribution of demand. 
Their model obtains demand information indirectly through correlated observations and leads to a three-stage structure in which probing, facility decisions, and recourse decisions are made sequentially. \citet{cao2026online} study an online FLP for mobile stores, in which sales observations are used to learn demand over time and update locations sequentially, emphasizing exploration and exploitation in repeated operations.

Our model takes a different perspective. Learning is neither a change in the timing of revelation nor an online update from repeated operations. Instead, it is a costly first-stage planning decision that directly reduces customer-level demand variance before demand is realized. 
Our approach is based on the premise that sampling effort reduces the variance of distributions.
For example, in \citet{moscarini2001optimal} learning occurs by reducing the variance of the normally distributed signals in the following way: by exerting an effort of $n$ with a convex cost of $c(n)$ the DM receives normally distributed signals with a variance of $\sigma^2/n$. 
Long-lasting location and learning decisions are chosen simultaneously, and the value of learning is measured through its effect on the expected recourse cost of the stochastic FLP. This perspective allows us to study how costly learning effort should be allocated across customers.

\subsection{Decision Dependency in Stochastic Programs}

The proposed model is also related to stochastic programs (SPs) with decision-dependent uncertainty, or {\it endogenous} uncertainty. 
Classical two-stage SPs assume that the probability distribution of uncertain parameters is known in advance and does not depend on the first-stage decisions.
In contrast, decision-dependent uncertainty allows decisions to affect either the timing of uncertainty resolution or the probability distribution itself \citep{jonsbraaten1998,Goel2006,hellemo2018,vayanos2026robust}. The latter case is especially relevant here: sampling decisions determine the variance of each demand component, so the expected recourse cost is evaluated under a distribution induced by the first-stage decision.

Decision dependence has also been studied through set-based descriptions of uncertainty. In robust optimization, uncertainty set may depend on the decision 
\citep{Nohadani2018};  
in facility location, \citet{Zeng2022} study a two-stage robust model in which the demand uncertainty set depends on the opened facilities. In distributionally robust optimization, the ambiguity set of probability distributions may likewise depend on the decision 
\citep{LuoMehrotra2020,Basciftci2021,RahimianMehrotra2023}. 
These approaches capture endogenous uncertainty through sets of possible realizations or distributions, while our setting uses a parametric stochastic model in which the variance of each demand component is controlled by a costly learning decision.

Existing studies of decision-dependent probability distributions typically consider specific structures through which decisions affect the underlying probability distribution \citep{hellemo2018}. These structures allow the decision dependence to be exploited to develop reformulation and algorithmic strategies. 
For instance, \citet{bazotte2026} represent decision-dependent random vectors as transformations of first-stage decisions and exogenous random vectors, thereby enabling decision-independent reformulations and standard SP methods such as sample average approximation. 
\citet{pantuso2025shaped} study two-stage SPs in which a finite partition of the first-stage feasible region determines the relevant probability distribution and develop 
distribution-specific L-shaped cuts, see \cite{hewitt2025production} and \cite{pantuso2025solution} for applications in facility location and production planning problems, respectively.

Our setting differs in both motivation and structure: the distributional dependence is continuous, rather than induced by a predefined ``discrete selection of distributions" and is driven by costly learning effort. This leads to a nonconvex optimization problem in the joint location and learning decisions, which necessitates tailored solution approaches.

%%%%%%%%%%%%%%%%%%%%%%%%%%%%%%%%%%%%%%%%%%%%%%%%%
\section{Problem Statement and Formulation}
\label{sec: problem_statement}
%%%%%%%%%%%%%%%%%%%%%%%%%%%%%%%%%%%%%%%%%%%%%%%%%

We introduce a two-stage stochastic capacitated FLP in which the DM can better learn demand by costly effort. Section~\ref{sec: FL_formulation} presents the problem under a known demand distribution to set the stage. Section~\ref{sec: FL_Learning} formulates the problem with the demand learning framework. We derive a closed-form expression for the objective function and establish its structure in Sections~\ref{sec: ExplicitReformulation} and \ref{sec:closed-form-structure}, respectively.

\subsection{Two-Stage Stochastic Facility Location under a Known Demand Distribution}
\label{sec: FL_formulation}

Consider a set $\Cs{I}$ of potential facility locations and a set $\Cs{J}$ of customer types/points. For each $i \in \Cs{I}$, let $u_i$ denote the installation cost, with $p$ being the number of facilities to be installed. For each $j \in \Cs{J}$, let $c_{0j}$ denote the unit penalty cost of unmet demand. Moreover, for each $i \in \Cs{I}$ and $j \in \Cs{J}$, let $c_{ij}$ denote the unit procurement cost incurred 
%and revenue incurred/obtained 
by satisfying the demand of customer $j \in \Cs{J}$ from location $i \in \Cs{I}$. We assume that $c_{0j} > c_{ij}>0$ %and $r_{j} > c_{ij}$ 
for all $i \in \Cs{I}$ and $j \in \Cs{J}$. In addition, let $D_{ij}$ denote the allotted capacity of location $i \in \Cs{I}$ for customer $j \in \Cs{J}$. 
Let $\xi \in \Bs{R}^{|\Cs{J}|}$ denote the uncertain demand, and suppose that $\xi$ has probability density function (pdf) $\varpi(\xi \mid \vartheta)$, where $\vartheta$ is a known parameter vector, e.g., a multivariate normal distribution with known mean and covariance. 
%distribution $\Bs{P}$.  
A two-stage stochastic capacitated FLP can then be formulated as
\begin{equation}
    \label{eq: FL_DI_first}
    %\begin{aligned}
         \min_{x \in \Cs{X}} \ \sum_{i \in \Cs{I}} u_i x_i + \ep{\xi \mid \vartheta}{R(x,\xi)}, \\ 
        %\textrm{s.t.} \quad & \sum_{i \in \Cs{I}} x_i =p, \\   
        %& x \in \{0,1\}^{|\Cs{I}|},
    %\end{aligned}
\end{equation}
where $\Cs{X}:=\{x: \sum_{i \in \Cs{I}} x_i  \le p, \; x \in \{0,1\}^{|\Cs{I}|}\}$. Moreover, for a fixed location decision $x \in \Cs{X}$ and demand realization $\xi \in \Bs{R}^{|\Cs{J}|}$, cost function $R(x,\xi)$ is defined as 
\begin{equation}
    \label{eq: FL_DI_second}
    \begin{aligned}
         R(x, \xi) = \min_{y,s} \ & \sum_{j \in \Cs{J}} \left\{ \sum_{i \in \Cs{I}}  c_{ij} y_{ij} + c_{0j} s_{j} \right\} \\ 
        \textrm{s.t.} \quad & \sum_{i \in \Cs{I}} y_{ij} + s_j = \xi_j, \quad & j \in \Cs{J}, \\
        & y_{ij} \leq D_{ij} x_i, \quad & i \in \Cs{I}, \; j \in \Cs{J}, \\
        & s_j \geq 0, \,\,\, y_{ij} \geq 0, \quad & i \in \Cs{I}, \;  j \in \Cs{J}.  
    \end{aligned}
\end{equation}
Here, the second-stage variables are $y_{ij}$, the procured amount from location $i \in \Cs{I}$ to customer $j \in \Cs{J}$, and $s_j$, the unmet demand for customer $j \in \Cs{J}$ \citep{Basciftci2021}.

\subsection{Two-Stage Stochastic Facility Location with Demand Learning}
\label{sec: FL_Learning}

Formulation \eqref{eq: FL_DI_first} assumes that the parameter vector $\vartheta$ is known, which may not hold in practice. In many settings, the parametric family $\varpi(\xi \mid \vartheta)$ is known while $\vartheta$ itself is unknown. 
In this paper, we assume that before the demand is realized, the DM can exert effort which may yield an update on parameter $\vartheta$. 

To formalize the learning process, we assume that demand follows an uncorrelated multivariate normal distribution. For each $j \in \Cs{J}$, let $n_j$ denote the sampling effort for customer $j$, and let $d_j$ denote the corresponding unit sampling cost. A sampling effort $n=(n_1, \ldots, n_{|\Cs{J}|})$ may update the distribution of demand as $\xi \sim \textrm{N}(\mu,\Sigma_n)$. Here, $\Sigma_n$ is a diagonal matrix, with entries $h_j(n_j; \sigma_{j})^2$, where $\sigma_j$ is a fixed parameter, $j \in \Cs{J}$, collectively referred to as the demand uncertainty. 
Throughout the paper, we make the following assumption about $h_j(n_j; \sigma_{j})$, $j \in \Cs{J}$. 
\begin{assumption}[Structural Properties of Standard Deviation]
\label{assum:properties}
For any $j \in \Cs{J}$, 
\begin{enumerate}[label=(\roman*)]
    \item $h_j(\cdot;\sigma_{j})$ is finite, positive, strictly convex, decreasing, and continuously differentiable on $[0,\infty)$ for any $\sigma_{j}$.
    \item $\frac{\partial h_j}{\partial \sigma_{j}}(n_j;\sigma_{j})>0$ for all $n_j\ge 0$.
    \item $\frac{\partial^2 h_j}{\partial n_j \partial \sigma_{j}}(n_j;\sigma_{j})<0$ for all $n_j \ge 0$.
\end{enumerate}
\end{assumption}

Assumption~\ref{assum:properties} implies that for each customer $j$, an increase in the sampling effort results in a reduction in the standard deviation of demand, $h_j(\cdot,\sigma_j)$, while mean demand remains unchanged, $j \in \Cs{J}$. Moreover, the marginal effect of increasing the sampling effort $n_j$ on the standard deviation, $h_j$, decreases as the demand uncertainty $\sigma_j$ increases. 
Assumption~\ref{assum:properties} generalizes, in a natural way, the classical setting that sampling effort $n$ will result in the variance of $\sigma^2/n$.
In other words, the classical setting satisfies this assumption.

For a fixed location $x$, sampling, in turn, may result in a reduction in the expected recourse cost, $\ep{\xi}{R(x,\xi)}$, provided that $R(\cdot,\xi)$ is not linear in $\xi$. However, sampling incurs a cost, creating a trade-off. Putting this together with the typical trade-off in \eqref{eq: FL_DI_first}, between facility location and recourse costs, yields a two-stage stochastic capacitated FLP with endogenous demand learning as
\begin{equation}
    \label{eq: FL_DI_first_learning}
    %\begin{aligned}
         \min_{x \in \Cs{X}, n\in \Cs{N}} \  \left\{F(x,n):= \sum_{i \in \Cs{I}} u_i x_i + \sum_{j \in \Cs{J}} d_j n_j + \ep{\xi}{R(x,\xi)}\right\}, 
         %\textrm{s.t.} \quad & \sum_{i \in \Cs{I}} x_i =p, \\   
        %& x \in \{0,1\}^{|\Cs{I}|},\\
        %& n \in [0,b]^{|\Cs{J}|}.
    %\end{aligned}
\end{equation}
where $x \in \Cs{X}$ and $n \in \Cs{N} := [0, b]^{|\Cs{J}|}$ have to be made here-and-now, with $\xi \sim \textrm{N}(\mu, \Sigma_n)$.  Formulation \eqref{eq: FL_DI_first_learning} is the central framework for study in this paper, for which we derive a closed-form expression for the objective function $F(x,n)$ next.

\subsection{Closed-Form Expression of the Objective Function}
\label{sec: ExplicitReformulation}

Observe that given fixed \(x\) and \(\xi\), \(R(x, \xi)\), as defined in~\eqref{eq: FL_DI_second}, is decomposable across customers. 
Let \(R_j(x, \xi_j)\) denote the cost associated with customer $j \in \Cs{J}$, with $R(x, \xi) = \sum_{j \in \Cs{J}} R_j(x, \xi_j)$. Proposition~\ref{prop: recourse_j}  presents a closed-form expression for the expected recourse function for each customer. %under a fixed sampling effort and location decision. 

\begin{proposition} 
\label{prop: recourse_j}
    Consider a fixed $x$ and $n$. For customer $j \in \Cs{J}$, let $\pi$ be a permutation of $\Cs{I} \cup \{0\}$ such that $c_{\pi(0)j} > c_{\pi(1)j} \geq c_{\pi(2)j} \geq \dots \geq c_{\pi(|\Cs{I}|)j} \geq 0$. 
    Suppose that $\xi \sim N(\mu, \Sigma_n)$. Then, %we have 
    \begin{align*}
    \ep{\xi_j}{R_j(x, \xi_j)} &= a_{\pi(0)j}(x) + c_{\pi(0)j} \mu_j + h_j(n_j;\sigma_j) \sum_{k=1}^{|\Cs{I}|} \left(c_{\pi(k-1)j} - c_{\pi(k)j}\right)\Psi\!\left(\frac{\hat{\xi}_{\pi(k)j}(x) - \mu_j}{h_j(n_j;\sigma_j)}\right),
    \end{align*}
    where 
    %\begin{equation}
    %\label{eq: intercept_j}
    %    \begin{aligned}
            $ a_{\pi(0)j}(x) = \sum_{i=1}^{|\Cs{I}|} D_{\pi(i)}(c_{\pi(i)j} - c_{\pi(0)j}) \, x_{\pi(i)}$, 
    %    \end{aligned}
    %\end{equation} 
    %\begin{equation}
    %\label{eq: break_j}
    %    \begin{aligned}
            $\hat{\xi}_{\pi(k)j}(x) = \sum_{i=k}^{|\Cs{I}|} D_{\pi(i)} x_{\pi(i)}$ for $k=0, \dots, |\Cs{I}|$, 
    %    \end{aligned}
    %\end{equation}
    and 
    \begin{equation}
    \label{eq:Psi}
        \Psi(z):= z\Phi(z) + \phi(z),   
    \end{equation}
    with $\Phi(\cdot)$ and $\phi(\cdot)$ being the cdf and pdf of a standard normal random variable, respectively.
\end{proposition}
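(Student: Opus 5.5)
First compute $R_j(x,\xi_j)$ in closed form for fixed $x$ and a realized $\xi_j$, then take expectations under $\xi_j\sim \mathrm{N}(\mu_j,h_j(n_j;\sigma_j)^2)$, which is the $j$th marginal of $\mathrm{N}(\mu,\Sigma_n)$ since $\Sigma_n$ is diagonal. For fixed $x$, the subproblem for customer $j$ is a continuous knapsack-type LP. We have $\xi_j$ units to serve. Source $\pi(k)$, $k\ge 1$, has capacity $D_{\pi(k)}x_{\pi(k)}$ at unit cost $c_{\pi(k)j}$. The penalty option $\pi(0)=0$ has unlimited capacity and the largest cost $c_{0j}$. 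Greedy filling, cheapest source first, is optimal: open facilities in the order $\pi(|\Cs{I}|),\pi(|\Cs{I}|-1),\dots,\pi(1)$, then the penalty. Hence $R_j(x,\cdot)$ is convex and piecewise linear. Its slope on $[\hat\xi_{\pi(k+1)j}(x),\hat\xi_{\pi(k)j}(x)]$ equals $c_{\pi(k)j}$, with $\hat\xi_{\pi(|\Cs{I}|+1)j}:=0$, and it equals $c_{0j}$ beyond $\hat\xi_{\pi(1)j}(x)$.

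Next, write $R_j$ as an affine function plus hinge terms:
\[
R_j(x,\xi_j)=a_{\pi(0)j}(x)+c_{\pi(0)j}\xi_j+\sum_{k=1}^{|\Cs{I}|}\bigl(c_{\pi(k-1)j}-c_{\pi(k)j}\bigr)\bigl(\hat\xi_{\pi(k)j}(x)-\xi_j\bigr)^+ .
\]
Check this on $\xi_j\ge \hat\xi_{\pi(1)j}(x)$. There all hinges vanish and the cost is
\[
\sum_i c_{\pi(i)j}D_{\pi(i)}x_{\pi(i)}+c_{0j}\bigl(\xi_j-\hat\xi_{\pi(1)j}\bigr)=a_{\pi(0)j}(x)+c_{0j}\xi_j .
\]
The slopes match on every breakpoint interval by telescoping: below $\hat\xi_{\pi(k)j}$ the slope becomes $c_{\pi(0)j}-\sum_{l\le k}(c_{\pi(l-1)j}-c_{\pi(l)j})=c_{\pi(k)j}$. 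I would verify this identity by induction on $k$, or by matching slopes and one value, since both sides are continuous and piecewise linear.

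I am treating $\xi_j$ as possibly negative under the normal model. For $\xi_j<0$ the LP is infeasible as written. The hinge representation above gives the natural extension, with slope $c_{\pi(|\Cs{I}|)j}$ continued below $0$, which is what the formula corresponds to. I would note explicitly that the formula is to be read with this extension, or equivalently that $R_j$ is interpreted through this expression on all of $\Bs{R}$.

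Finally, take expectations. Since $\mathbb{E}[\xi_j]=\mu_j$, the affine part contributes $a_{\pi(0)j}(x)+c_{\pi(0)j}\mu_j$. For each hinge I use the standard normal loss identity: if $\xi_j=\mu_j+hZ$ with $Z\sim\mathrm N(0,1)$ and $z=(\hat\xi-\mu_j)/h$, then
\[
\mathbb E(\hat\xi-\xi_j)^+=h\,\mathbb E(z-Z)^+=h\bigl(z\Phi(z)+\phi(z)\bigr)=h\Psi(z).
\]
The identity follows from $\int_{-\infty}^z (z-t)\phi(t)\,dt=z\Phi(z)+\phi(z)$, using $\phi'(t)=-t\phi(t)$. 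Summing with weights $c_{\pi(k-1)j}-c_{\pi(k)j}$ and factoring out $h=h_j(n_j;\sigma_j)$ gives the claim. The only real obstacle is getting the hinge decomposition and indexing right, including ties among the costs, which merely produce zero weights. The Gaussian computation is routine.
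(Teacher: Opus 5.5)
Your proof is correct, but it takes a different route from the paper's.

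\textbf{The paper's route.} Relabel so that $c_0>c_1\ge\cdots\ge c_{|\Cs{I}|}$ and suppress $j$, as the paper does. The paper first uses LP duality: it enumerates the dual extreme points $\beta=c_k$, $\alpha_i=\min\{0,c_i-c_k\}$ to write $R(x,\xi)=\max_k\{a_k(x)+c_k\xi\}$. It then identifies the active affine piece on each interval $[\hat\xi_{k+1},\hat\xi_k)$. Next it integrates each piece against the normal density using $\int_a^b\zeta\phi(\zeta)\,d\zeta=\phi(a)-\phi(b)$. Only after that does it telescope the resulting $\Phi$ and $\phi$ terms, via $a_k-a_{k-1}=(c_{k-1}-c_k)\hat\xi_k$.

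\textbf{Your route.} You get the same function from the primal by greedy filling. You then telescope deterministically, at the level of slopes, by writing $R$ as an affine function plus a nonnegative combination of hinges $(\hat\xi_k-\xi)^+$. Linearity of expectation and the normal loss identity give each term $h\Psi(\cdot)$ directly. There is no interval-by-interval integration and no bookkeeping of the limits at $\pm\infty$.

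\textbf{What each buys.} Your route is shorter and shows where the structure of \eqref{eq: abstract_model_f} comes from. The weights are the slope increments $c_{k-1}-c_k\ge 0$. Each hinge expectation is the same representation $\Psi(z)=\Bs{E}[(\zeta+z)_+]$ that the paper later uses for the SAA. The dual route, in exchange, carries an explicit optimality certificate and gives convexity of $R(x,\cdot)$ for free as a maximum of affine functions. Your greedy claim is the standard fractional-knapsack fact; if you want it airtight, the dual point $\beta=c_k$ on the $k$th interval certifies it.

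\textbf{Negative demand.} Your explicit handling of $\xi_j<0$ is more careful than the paper's. For $\xi_j<0$ the primal is infeasible and the dual is unbounded (let $\beta\to-\infty$), which the extreme-point enumeration ignores. The paper's convention $\hat\xi_{|\Cs{I}|+1}=-\infty$ silently adopts exactly your extension, $R(x,\xi)=c_{|\Cs{I}|}\xi$ for $\xi<0$.
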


By the fact that $R(x, \xi) = \sum_{j \in \Cs{J}} R_j(x, \xi_j)$, we have $\ep{\xi}{R(x, \xi)} = \sum_{j \in \Cs{J}} \ep{\xi_j}{R_j(x, \xi_j)}$, where $\ep{\xi_j}{R_j(x, \xi_j)}$ is characterized by Proposition~\ref{prop: recourse_j}, for each $j \in \Cs{J}$. Hence, we present an abstract form of problem \eqref{eq: FL_DI_first_learning} as the following nonconvex MIP
\begin{equation}
\label{eq: abstract_model}
    \min_{x \in \Cs{X}, n \in \Cs{N}} \Big \{ F(x, n) = e+ d^\top n + r^\top x  + f(x, n)\Big\}
\end{equation}
where 
\begin{equation}
\label{eq: abstract_model_f}
    f(x, n) := \sum_{j \in \Cs{J}} h_j(n_j;\sigma_j) \sum_{i \in \Cs{I}}  q_{ij} \Psi\left(\frac{g_{ij}(x)}{h_j(n_j;\sigma_j)}\right)
\end{equation}
with $q_{ij}>0$ and $g_{ij}(x)$ being a linear function of $x$, $i \in \Cs{I}$ and $j \in \Cs{J}$. %Moreover, $\Psi'(z) = \Phi(z)$. 
%We also have $\Psi(-\infty)=0$ and $\Psi(+\infty)=+\infty$. 
We note that by Proposition~\ref{prop: recourse_j},  the constant term in \eqref{eq: abstract_model} corresponds to $\sum_{j \in \Cs{J}} c_{\pi(0)j} \mu_j$, while the linear terms in $x$ correspond to $a_{\pi(0)j}(x)$, $j \in \Cs{J}$, and fixed installation cost, $u^\top x$. 

\subsection{Structural Properties of the Objective Function}
\label{sec:closed-form-structure}

In this section, we establish structural properties of \eqref{eq: abstract_model}, which provide the foundation for the comparative statics analysis in Section~\ref{sec: comparativestatics} and the solution algorithms developed in Section~\ref{sec: algorithms}.

\begin{proposition}
\label{prop:convexity_x}
    For a fixed $n \in \Bs{R}^{|\Cs{J}|}$, $F(\cdot, n)$, as defined in \eqref{eq: abstract_model}, is convex on $\Bs{R}^{|\Cs{I}|}$. 
\end{proposition}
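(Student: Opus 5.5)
With $n$ fixed, every $h_j(n_j;\sigma_j)$ is a fixed positive constant by Assumption~\ref{assum:properties}(i). Write it as $h_j>0$. In \eqref{eq: abstract_model}, the terms $e + d^\top n + r^\top x$ are affine in $x$, so convexity of $F(\cdot,n)$ reduces to convexity of $f(\cdot,n)$ in \eqref{eq: abstract_model_f}. A nonnegative combination of convex functions is convex, and each coefficient $h_j q_{ij}$ is positive. It therefore suffices to show that each map $x \mapsto \Psi\big(g_{ij}(x)/h_j\big)$ is convex on $\Bs{R}^{|\Cs{I}|}$.

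The key step is that $\Psi(z) = z\Phi(z)+\phi(z)$ from \eqref{eq:Psi} is convex on $\Bs{R}$. Using $\phi'(z) = -z\phi(z)$, we get
\begin{equation*}
\Psi'(z) = \Phi(z) + z\phi(z) - z\phi(z) = \Phi(z), \qquad \Psi''(z) = \phi(z) > 0,
\end{equation*}
so $\Psi$ is smooth and strictly convex. The same conclusion follows probabilistically from $\Psi(z) = \ep{}{(Z+z)^+}$ for $Z\sim \textrm{N}(0,1)$, which is an expectation of convex functions of $z$.

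Since $g_{ij}$ is linear (affine) in $x$ and $h_j>0$ is constant, the map $x\mapsto g_{ij}(x)/h_j$ is affine. A convex function composed with an affine map is convex, so each summand is convex, and summing finishes the proof. The statement covers all of $\Bs{R}^{|\Cs{I}|}$ rather than just $\Cs{X}$, and this is consistent because $g_{ij}$ and the linear terms are defined for all real $x$.

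There is essentially no obstacle. The only points needing care are computing $\Psi''$ correctly and confirming that $h_j(n_j;\sigma_j)>0$ at the given $n$, which Assumption~\ref{assum:properties}(i) ensures on $[0,\infty)$. The proposition is stated for $n\in\Bs{R}^{|\Cs{J}|}$, so I will restrict to the meaningful range $n\ge 0$, or equivalently note that only positivity of $h_j$ is used.
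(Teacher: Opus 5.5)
Your proof is correct and follows the paper's argument: both reduce convexity of $F(\cdot,n)$ to convexity of each summand $h_j\,q_{ij}\,\Psi\big(g_{ij}(x)/h_j\big)$, using $\Psi'=\Phi$, $\Psi''=\phi>0$, $h_j>0$, $q_{ij}>0$, and the affinity of $g_{ij}$. The only difference is packaging. The paper computes the Hessian $\nabla g_{ij}\nabla g_{ij}^\top \phi(\cdot)/h_j \succeq 0$ directly, whereas you invoke the general fact that a convex function composed with an affine map is convex.
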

\proof{Proof.} 
    Recall that $F(x, y) := e+ d^\top n + r^\top x  + \sum_{j \in \Cs{J}} f_j(x, n_j)$, where $f_j(x, n_j) = h_j(n_j;\sigma_j) \sum_{i \in \Cs{I}}  q_{ij} \Psi\left(\frac{g_{ij}(x)}{h_j(n_j;\sigma_j)}\right)$, as defined in \eqref{eq: abstract_model_f_j}. Hence, it suffices to show $f_j(\cdot, n_j)$ is convex for all $j \in \Cs{J}$. For simplicity, let us suppress $j$. Consider $K_i(x):=h(n;\sigma) \Psi\left(\frac{g_{i}(x)}{h(n;\sigma)}\right)$ for $i \in \Cs{I}$. Let $\alpha=h(n\sigma)>0$ and $r_i(x)=g_{i}(x)/ \alpha$. 
    Then, $K_i(x)=\alpha \Psi\big(r_i(x)\big)$. We claim that $K_i(\cdot)$ is convex for all $i \in \Cs{I}$, which leads to convexity of $f(\cdot,n)$ as $q_i>0$ for $i \in \Cs{I}$. We have  
    $\nabla K_i(x)  =\alpha 
\nabla r_i(x) \Psi'\big(r_i(x)\big)= 
\nabla g_i(x) \Phi\big(r_i(x)\big)$. Hence, 
$\nabla^2 K_i(x) = 
\nabla g_{i}(x) 
\nabla r_i(x)^\top \Phi'\big(r_i(x)\big) 
        = 
\nabla g_{i}(x) 
\nabla g_{i}(x)^\top \phi\big(r_i(x)\big) / \alpha 
        \succeq 0$.
\hfill \Halmos 
\endproof  

\begin{proposition}
\label{prop:convexity_n}
    For a fixed $x \in \Bs{R}^{|\Cs{I}|}$, $F(x,\cdot)$, as defined in \eqref{eq: abstract_model}, is strictly convex on $\Bs{R}_{+}^{|\Cs{J}|}$. 
\end{proposition}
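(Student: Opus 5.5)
The plan is to exploit separability and a perspective-function argument. For fixed $x$, the terms $e + r^\top x$ are constant and $d^\top n$ is linear in $n$. By \eqref{eq: abstract_model_f}, $f(x,n)=\sum_{j\in\Cs{J}} f_j(x,n_j)$, where each summand depends only on the single coordinate $n_j$. A sum of functions of distinct coordinates is strictly convex on the product domain $\Bs{R}_+^{|\Cs{J}|}$ as soon as each summand is strictly convex in its own coordinate: if $n\neq n'$, some coordinate differs, that term gives a strict inequality, and all other terms give weak ones. So it suffices to fix $j$ and show that
\[
n_j \mapsto \sum_{i\in\Cs{I}} q_{ij}\, P_{ij}\bigl(h_j(n_j;\sigma_j)\bigr), \qquad P_{ij}(t):=t\,\Psi\!\left(\frac{g_{ij}(x)}{t}\right),\ t>0,
\]
is strictly convex on $[0,\infty)$. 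Here $g_{ij}(x)$ is just a constant $g$, and $q_{ij}>0$.

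The key step is to study the scalar function $P(t)=t\Psi(g/t)$ on $t>0$, which is the perspective of $\Psi$.

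\emph{Convexity of $P$.} Since $\Psi'(z)=\Phi(z)$ and $\Psi''(z)=\phi(z)>0$, the function $\Psi$ is convex. Its perspective is therefore convex in $t$; this can also be checked directly, since $P''(t)=\phi(g/t)\,g^2/t^3\ge 0$.

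\emph{Strict monotonicity of $P$.} Write $z=g/t$. Then
\[
P'(t)=\Psi(z)-z\Psi'(z)=z\Phi(z)+\phi(z)-z\Phi(z)=\phi(z)>0,
\]
so $P$ is strictly increasing.

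Now compose with $h=h_j(\cdot;\sigma_j)$, which by Assumption~\ref{assum:properties}(i) is positive and strictly convex on $[0,\infty)$. For $a\neq b$ in $[0,\infty)$ and $\lambda\in(0,1)$, strict convexity of $h$ gives $h(\lambda a+(1-\lambda)b)<\lambda h(a)+(1-\lambda)h(b)$. Applying $P$, which is strictly increasing, and then using convexity of $P$ gives
\[
P\bigl(h(\lambda a+(1-\lambda)b)\bigr) < P\bigl(\lambda h(a)+(1-\lambda)h(b)\bigr) \le \lambda P(h(a))+(1-\lambda)P(h(b)).
\]
Thus each $P_{ij}\circ h_j$ is strictly convex. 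Multiplying by $q_{ij}>0$ and summing over $i$ preserves strict convexity, which completes the argument after combining with the separability reduction above.

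The only delicate point is strictness. Assumption~\ref{assum:properties} gives only $C^1$ regularity for $h_j$, so a second-derivative argument is not available in general. Moreover, the formal second derivative $\phi(z)h''+\phi(z)z^2(h')^2/h$ could degenerate where $h''$ vanishes. This is why I would use the monotone-composition argument, which needs only strict convexity of $h_j$ and strict monotonicity of $P$ (from $\phi>0$), rather than a Hessian computation. Positivity of $h_j$ on $[0,\infty)$ ensures that $P$ is evaluated only at $t>0$, so no boundary issues arise at $n_j=0$.
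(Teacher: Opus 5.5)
Your proof is correct, but it takes a different route from the paper's. Both arguments reduce to a single coordinate by separability. The paper then differentiates twice. It obtains $f_j'(n)=h'(n)A(n)$ and $f_j''(n)=h''(n)A(n)+\frac{(h'(n))^2}{h(n)}B(n)$, where $A(n)=\sum_{i} q_{ij}\phi\big(g_{ij}(x)/h(n)\big)>0$ and $B(n)=\sum_i q_{ij}\big(g_{ij}(x)/h(n)\big)^2\phi\big(g_{ij}(x)/h(n)\big)\ge 0$, and concludes $f_j''>0$ from $h''>0$.

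That last step reads more into Assumption~\ref{assum:properties} than it states. The assumption gives only $C^1$ regularity and strict convexity, which guarantee neither that $h''$ exists nor that it is positive everywhere.

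Your perspective argument avoids this. You show that $P(t)=t\Psi(g/t)$ is convex with $P'(t)=\phi(g/t)>0$, so $P\circ h_j$ inherits strict convexity from $h_j$ by monotone composition. This uses exactly the stated hypotheses, positivity and strict convexity of $h_j$, and so proves the proposition as written. It also handles the degenerate case $g_{ij}(x)=0$ cleanly: there $P$ is linear, and strictness comes entirely from $h_j$.

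What the paper's computation buys in return is the explicit formulas for the first and second derivatives. These are reused in Lemma~\ref{lem:char-g} and, more importantly, to get $\varphi''(n^\ast)>0$ for the implicit-function-theorem step behind Theorems~\ref{thm:cs_min} and~\ref{thm:cs_value}. Strict convexity alone, which is all your argument yields, does not give a strictly positive second derivative at the minimizer. For that downstream use, some twice-differentiability with positive curvature has to be assumed anyway.
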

\proof{Proof.} 
Similar to the proof of Proposition~\ref{prop:convexity_x}, it suffices to show 
$f_j(x, n_j)$ is strictly convex in $n_j$ on $\Bs{R}$ for each $j \in \Cs{J}$. For simplicity, let us suppress $x$, $j$, and $\sigma$. 
Consider $f(n):= h(n) \sum_{i \in \Cs{I}} q_i \Psi\big(r_i(n)\big)$, where $r_i(n):=\frac{g_i}{h(n)}$. Moreover, define
%\begin{equation*}
$A(n):=\sum_{i \in \Cs{I}}q_i\phi\big(r_i(n)\big)$ and $ B(n):=\sum_{i \in \Cs{I}}q_i r_i(n)^2\phi\big(r_i(n)\big)$.
%\end{equation*}
Using $\Psi'(z)=\Phi(z)$ and $\Psi(z)-z\Phi(z)=\phi(z)$, we have $f'(n)=h'(n)A(n)$ and 
%\begin{equation*}
$f''(n)=h''(n)A(n)+\frac{\big(h'(n)\big)^2}{h(n)}B(n)$.
%\end{equation*}
Since $q_i>0$, $\phi(\cdot)>0$, and $h(n)>0$, we have $A(n) >0$ and $B(n)>0$ for all $n \ge 0$. 
By Assumption~\ref{assum:properties}, $h'(n)<0$ and $h''(n) > 0$  for $n \ge 0$. Thus, $f''(n)>0$ for $n\ge 0$; hence, $f$ is strictly convex on $[0, \infty)$. 
\hfill \Halmos 
\endproof

%%%%%%%%%%%%%%%%%%%%%%%%%%%%%%%%%%%%%%%%%%%%%%%%%
\section{Comparative Statics Analysis}
\label{sec: comparativestatics}
%%%%%%%%%%%%%%%%%%%%%%%%%%%%%%%%%%%%%%%%%%%%%%%%%

This section develops local comparative statics for the two-stage stochastic FLP with endogenous demand learning, characterized by \eqref{eq: abstract_model}. Specifically, we examine how changes in the key learning parameters---the sampling cost, $d$, and the initial demand uncertainty, $\sigma$---affect the local minimizer and the corresponding local optimal value. Throughout the analysis, we assume that the optimal discrete facility decision remains unchanged within a neighborhood of the parameter values under consideration (see Assumption~\ref{assum:regularity}).

Observe that for a fixed $x \in \Cs{X}$, $f(x, \cdot)$, defined in \eqref{eq: abstract_model_f}, is additive over $j$. That is, $f(x, n):=\sum_{j \in \Cs{J}} f_j(x,n_j)$, where 
\begin{equation}
\label{eq: abstract_model_f_j}
    f_j(x, n_j) := h_j(n_j;\sigma_j) \sum_{i \in \Cs{I}}  q_{ij} \Psi\left(\frac{g_{ij}(x)}{h_j(n_j;\sigma_j)}\right).
\end{equation}
Given this separability, our goal in this section is to study the sensitivity of local minimizers of $F$, defined in \eqref{eq: abstract_model}, over $\Cs{X} \times \Cs{N}$ with respect to the scalar parameters $d_j$ and $\sigma_{j}$, $j \in \Cs{J}$, varying one parameter at a time while holding all other model parameters fixed.
To isolate the effect of parameter changes on coordinate $j$, we fix $x \in \Cs{X}$ and $n_{-j}:=(n_\ell)_{\ell 
\neq j}$. 

\begin{assumption}[Local Regularity Condition]
\label{assum:regularity}
Consider $j \in \Cs{J}$. For a reference parameter value $\bar\theta_j$, let $\big(x^\ast(\bar \theta_j),n^\ast(\bar \theta_j)\big)$ be a local minimizer of $F$, defined in \eqref{eq: abstract_model}, over $\Cs{X} \times \Cs{N}$. Suppose that
there exists an open neighborhood $\Cs{T}$ of $\bar\theta_j$ such that for every $\theta_j\in\Cs{T}$, the corresponding local minimizer $\big(x^\ast(\theta_j),n^\ast(\theta_j)\big)$ of $F$ over $\Cs{X} \times \Cs{N}$ satisfies $x^\ast(\theta_j)=x^\ast(\bar\theta_j)$, i.e., the optimal discrete decision remains unchanged locally, and $n^\ast(\theta_j) \in (0,b)$. 
\end{assumption}

Let $\theta_j\in \{d_j,\sigma_{j}\}$ denote the free parameter. 
Define the one-dimensional subproblem
\begin{equation}
\label{eq:subproblem}
\min_{0\le n_j\le b} \Bigg\{\varphi_j(n_j;\theta_j):=d_j n_j + f_j(x,n_j)\Bigg\}. 
\end{equation}
Under Assumption \ref{assum:regularity} and the strict convexity of $F(x,\cdot)$ on $\Bs{R}_{+}^{|\Cs{J}|}$, we have 
%\begin{equation*}
$\frac{\partial \varphi_j}{\partial n_j}(n_j;\theta_j)=0$ and $\frac{\partial^2 \varphi_j}{\partial n_j^2}( n_j;\theta_j)>0$
%\end{equation*}
at $n=n^\ast(\theta_j)$ for $\theta_j \in \Cs{T}$ (see Proposition~\ref{prop:convexity_n} and Corollary~\ref{cor:min-g}).
Thus, the implicit-function theorem implies that there exists a unique differentiable local solution mapping $n_j^\ast(\theta_j)$ on $\Cs{T}$. Define the resulting differentiable local optimal value function
\begin{equation*}
V^\ast_j(\theta_j):=\varphi_j\big(n_j^\ast(\theta_j);\theta_j\big).
\end{equation*}
%Observe that $V^\ast_j(\theta_j)$ is differentiable on $\Cs{T}$. 

\subsection{Main Theorems}
\label{sec: cs_results}
We are now ready to state the main theorems in this section. 

%, we introduce some notation (see Section \ref{secEC:compstat} for explicit definitions). Let $d_0(\theta_j)=-\frac{\partial h_j}{\partial n_j} (0)\sum_{i \in \Cs{I}} p_{ij} \phi\Big(\frac{g_{ij}(x)}{h_j(0; \theta_j)}\Big)$.

\begin{theorem}[Sensitivity of Local Minimizer]
\label{thm:cs_min}
Consider customer $j \in \Cs{J}$. Under Assumption \ref{assum:regularity}, for $\theta_j \in \Cs{T}$, $\theta_j \in \{d_j, \sigma_{j}\}$, we have 
$n^\ast_j$ locally  
\begin{enumerate}[label=(\roman*).]
\item decreasing in the sampling cost, i.e., 
$\frac{\partial n_j^\ast}{\partial d_j}<0$,
\item increasing in the initial demand uncertainty, i.e., $\frac{\partial n_j^\ast}{\partial \sigma_{j}} >0 $. 
\end{enumerate}
\end{theorem}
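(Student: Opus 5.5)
The plan is to apply the implicit-function theorem to the first-order condition of the one-dimensional subproblem \eqref{eq:subproblem}. By Assumption~\ref{assum:regularity}, $x$ is fixed at $x^\ast(\bar\theta_j)$ and $n_j^\ast(\theta_j)\in(0,b)$ is interior. Hence $n_j^\ast$ satisfies
\[
\frac{\partial \varphi_j}{\partial n_j}(n_j^\ast;\theta_j)=d_j+\frac{\partial h_j}{\partial n_j}(n_j^\ast;\sigma_j)\,A_j(n_j^\ast;\sigma_j)=0,
\]
where $A_j:=\sum_{i\in\Cs{I}} q_{ij}\phi\big(g_{ij}(x)/h_j\big)$. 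This uses the derivative $f'=h'A$ computed in the proof of Proposition~\ref{prop:convexity_n}. Differentiating the identity in $\theta_j$ gives
\[
\frac{\partial n_j^\ast}{\partial \theta_j}=-\frac{\partial^2\varphi_j/\partial n_j\partial\theta_j}{\partial^2\varphi_j/\partial n_j^2}.
\]
The denominator is strictly positive by Proposition~\ref{prop:convexity_n}. So in each case it suffices to determine the sign of the mixed partial.

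For (i), the parameter $d_j$ enters only through the linear term. Thus $\partial^2\varphi_j/\partial n_j\partial d_j=1>0$, which gives $\partial n_j^\ast/\partial d_j=-1/\varphi_j''<0$ immediately.

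For (ii), I differentiate $h_j'A_j$ with respect to $\sigma_j$, writing $h=h_j$, $h_n=\partial h/\partial n_j$, $h_\sigma=\partial h/\partial\sigma_j$ and $r_i=g_{ij}/h$. Since $\partial r_i/\partial\sigma=-r_i h_\sigma/h$ and $\phi'(r)=-r\phi(r)$,
\[
\frac{\partial A_j}{\partial\sigma_j}=\sum_i q_{ij}\,\phi(r_i)\,r_i^2\,\frac{h_\sigma}{h}=\frac{h_\sigma}{h}B_j,
\]
where $B_j$ is as in the proof of Proposition~\ref{prop:convexity_n}. Therefore
\[
\frac{\partial^2\varphi_j}{\partial n_j\partial\sigma_j}=\frac{\partial^2 h}{\partial n_j\partial\sigma_j}\,A_j+h_n\,\frac{h_\sigma}{h}\,B_j.
\]
Each term is negative. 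In the first, $\partial^2h/\partial n_j\partial\sigma_j<0$ by Assumption~\ref{assum:properties}(iii) and $A_j>0$. In the second, $h_n<0$ by part (i), $h_\sigma>0$ by part (ii), and $h>0$, $B_j\ge 0$. The mixed partial is therefore strictly negative, so $\partial n_j^\ast/\partial\sigma_j>0$.

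The main obstacle is keeping the chain rule straight in the $\sigma_j$ derivative, since $\sigma_j$ enters through both $h$ and $h_n$, and the argument rests on the two resulting pieces having the same sign. A minor point is $B_j$: it vanishes if all $g_{ij}(x)=0$. Strictness is nonetheless guaranteed by the first term alone, so $B_j\ge0$ suffices. One should also check that $h_j$ is $C^2$ jointly in $(n_j,\sigma_j)$ so that the implicit-function theorem applies. This is implicit in Assumption~\ref{assum:properties}(iii) and in the setup preceding the theorem.
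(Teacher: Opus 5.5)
Your proof is correct and follows the paper's route: you implicitly differentiate the interior first-order condition, take positivity of $\varphi_j''$ from the convexity analysis, note that the cross-partial equals $1$ for $d_j$, and show the cross-partial is negative for $\sigma_j$ (the content of the paper's Lemma~\ref{lem:g-signs}). Your computation of that last cross-partial is more careful than the paper's. Your second term, $\frac{h_n h_\sigma}{h}B_j$, correctly involves $g_{ij}(x)^2$ and so needs only $B_j\ge 0$. The paper instead writes $g_i(x)$ unsquared and appeals to $g_i(x)\ge 0$, which is not guaranteed, since $g_{ij}(x)=\hat\xi_{\pi(k)j}(x)-\mu_j$ can be negative.
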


\begin{theorem}[Sensitivity of Local Optimal Objective Value]
\label{thm:cs_value}
Consider customer $j \in \Cs{J}$. Under Assumption \ref{assum:regularity}, for $\theta_j \in \Cs{T}$, $\theta_j \in \{d_j, \sigma_{j}\}$, we have 
$V^\ast_j$ locally increasing, i.e., $\frac{\partial V_j^\ast}{\partial \theta_{j}} >0 $. 
\end{theorem}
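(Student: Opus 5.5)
The plan is to apply the envelope theorem to the one-dimensional subproblem \eqref{eq:subproblem}. Under Assumption~\ref{assum:regularity}, $x^\ast$ is locally constant and $n_j^\ast(\theta_j)\in(0,b)$ is an interior stationary point of $\varphi_j(\cdot;\theta_j)$. This stationary point is differentiable in $\theta_j$ by the implicit-function argument preceding the theorems. Differentiating $V_j^\ast(\theta_j)=\varphi_j(n_j^\ast(\theta_j);\theta_j)$ by the chain rule, the term multiplying $\partial n_j^\ast/\partial\theta_j$ is $\frac{\partial\varphi_j}{\partial n_j}(n_j^\ast;\theta_j)=0$. Hence
\[
\frac{\partial V_j^\ast}{\partial\theta_j}=\frac{\partial\varphi_j}{\partial\theta_j}\big(n_j^\ast(\theta_j);\theta_j\big).
\]
It therefore suffices to sign the partial derivative of $\varphi_j$ with respect to each parameter, evaluated at the optimum, with $x$ and $n_j$ held fixed.

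\textbf{Case $\theta_j=d_j$.} Only the linear term $d_jn_j$ depends on $d_j$, so $\partial\varphi_j/\partial d_j=n_j^\ast(d_j)$. This is strictly positive because Assumption~\ref{assum:regularity} places $n_j^\ast$ in the open interval $(0,b)$.

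\textbf{Case $\theta_j=\sigma_j$.} Here $d_jn_j$ does not depend on $\sigma_j$, so $\partial\varphi_j/\partial\sigma_j=\partial f_j/\partial\sigma_j$. Write $h=h_j(n_j;\sigma_j)$ and $r_i=g_{ij}(x)/h$. Following the computation in the proof of Proposition~\ref{prop:convexity_n}, differentiate $h\,\Psi(g/h)$ with respect to $h$ using $\Psi'=\Phi$ and $\Psi(z)-z\Phi(z)=\phi(z)$. This gives $\frac{\partial}{\partial h}\big[h\Psi(g/h)\big]=\phi(g/h)$, and therefore
\[
\frac{\partial f_j}{\partial\sigma_j}=\frac{\partial h_j}{\partial\sigma_j}(n_j;\sigma_j)\sum_{i\in\Cs{I}}q_{ij}\,\phi(r_i).
\]
This is strictly positive by Assumption~\ref{assum:properties}(ii), $q_{ij}>0$, and $\phi>0$. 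Both cases together give $\partial V_j^\ast/\partial\theta_j>0$, which proves Theorem~\ref{thm:cs_value}.

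\textbf{Main obstacle.} There is little difficulty beyond justifying the envelope step. It needs differentiability of $n_j^\ast(\theta_j)$ and the vanishing first-order condition. Both follow from interiority in Assumption~\ref{assum:regularity} together with the strict convexity in Proposition~\ref{prop:convexity_n}, which gives $\partial^2\varphi_j/\partial n_j^2>0$ for the implicit-function theorem. One should also check that $h_j$ is differentiable in $\sigma_j$; this is implicit in Assumption~\ref{assum:properties}(ii).

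Finally, because $x^\ast$ is locally fixed and $F$ is separable in $n$, the remaining coordinates $n_{-j}^\ast$ do not depend on $\theta_j$. So this scalar analysis also describes how the full local optimal value of $F$ varies with $\theta_j$.
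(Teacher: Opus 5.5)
Your proposal is correct and follows the paper's proof essentially verbatim. The first-order condition $\partial\varphi_j/\partial n_j=0$ removes the $\partial n_j^\ast/\partial\theta_j$ term, leaving $\partial\varphi_j/\partial d_j=n_j^\ast>0$ by interiority and $\partial\varphi_j/\partial\sigma_j=\frac{\partial h_j}{\partial\sigma_j}\sum_{i}q_{ij}\phi(r_i)>0$, which is exactly Lemma~\ref{lem:g-signs}(i). You also state the first-order condition and the $d_j$-derivative correctly, whereas the paper's write-up contains typos there: it writes $\partial\varphi/\partial\theta=0$ and $\partial V^\ast/\partial d=y$.
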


Theorems~\ref{thm:cs_min} and \ref{thm:cs_value} establish local sensitivity results for both the local minimizer $n_j^\ast$ and the local optimal value $V_j$ on regions where the optimal facility decision $x^\ast$ remains locally unchanged, as ensured by Assumption~\ref{assum:regularity}. 
We glean the following insights from the above results.
(i) The optimal learning effort decreases when the cost per unit of effort increases, i.e., more costly signals result in less optimal exploration (effort).
(ii) The optimal learning effort increases when the uncertainty in demand increases, i.e., higher uncertainty induces more exploration.
(iii) Increasing the sampling cost and demand variance will result in an increase in the total cost.

These results need not hold across parameter regions where the optimal discrete decision changes, since the solution mapping may become nonsmooth at such points. 
Likewise, even for the stochastic facility location problem \eqref{eq: FL_DI_first}, which does not include the sampling decision $n$, the sensitivity of the optimal discrete facility decision $x$ with respect to model parameters, such as the installation cost $u$, capacity, $D$, and transportation/penalty cost, $c$, is typically investigated numerically in the literature rather than through theoretical comparative statics analysis. 

\subsection{Proofs}
\label{sec:compstat_proof}

%In this section, we analyze the comparative statics of a local minimizer and the local optimal objective value of the subproblem \eqref{eq:subproblem} with respect to $\theta_j \in \{d_j, \sigma_{j}\}$. 
To simplify notation, we suppress the index $j$ in this section, whenever there is no ambiguity. We also drop the parameter argument $\theta$ when writing derivatives of the univariate functions $h(n;\theta)$ and $\varphi(n;\theta)$ with respect to $n$. 

Before proving main theorems, we first present the structural properties and derivatives of the objective function. 

%\subsection{Structural Properties of the Objective Function}
%\label{secEC:obj_structure}

\begin{lemma}[Structure of $\varphi(n)$ and its Derivatives]\label{lem:char-g}

Consider the function $\varphi(n)$, as defined in \eqref{eq:subproblem}, and its derivatives. 
Define $d_{0}:=-  h'(0)A(0)>0$, where $A(n):=\sum_{i \in \Cs{I}}q_i\phi\left(\frac{g_i(x)}{h(n)}\right)>0$. Then, 
\begin{enumerate}[label=(\roman*).]
  \item $\varphi$ is strictly convex on $[0,\infty)$. That is, $\varphi'$ is  strictly increasing on $[0,\infty)$. 
   \item $\varphi$ is strictly increasing on $[0,\infty)$ if and only if $d \ge d_{0}$. 
    \item $\varphi'$ has unique root $n_{min}\in(0,\infty)$ on $[0,\infty)$ if and only if $d<d_{0}$, with $\varphi$ strictly decreasing on $[0, n_{min})$ and strictly increasing on $(n_{min},\infty)$.
  
\end{enumerate}
\end{lemma}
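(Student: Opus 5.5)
The plan is to work entirely with the one-dimensional function $\varphi(n)=dn+f(n)$, where $f(n):=f_j(x,n)=h(n)\sum_{i}q_i\Psi\big(g_i(x)/h(n)\big)$ with $x$ fixed. The derivative formulas from the proof of Proposition~\ref{prop:convexity_n} give
\[
\varphi'(n)=d+h'(n)A(n),\qquad \varphi''(n)=h''(n)A(n)+\frac{\big(h'(n)\big)^2}{h(n)}B(n)>0 .
\]
Part (i) then follows at once. Since $\varphi''>0$ on $[0,\infty)$, $\varphi$ is strictly convex there. Equivalently, $\varphi'$ is strictly increasing there, because $\varphi'$ is differentiable with positive derivative.

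Before parts (ii) and (iii), I check that $d_0>0$. We have $A(0)>0$ because $q_i>0$ and $\phi>0$, and $h'(0)<0$ by Assumption~\ref{assum:properties}. Hence $d_0=-h'(0)A(0)>0$, and
\[
\varphi'(0)=d+h'(0)A(0)=d-d_0 .
\]
Everything else reduces to comparing $d$ with $d_0$ via the monotonicity of $\varphi'$ established in (i).

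\textbf{Part (ii).} If $d\ge d_0$, then $\varphi'(0)\ge 0$. Strict monotonicity of $\varphi'$ gives $\varphi'(n)>0$ for all $n>0$, so $\varphi$ is strictly increasing on $[0,\infty)$; a derivative that vanishes only at the endpoint $0$ does no harm. Conversely, suppose $d<d_0$. Then $\varphi'(0)<0$, and by continuity $\varphi'<0$ on some interval $[0,\varepsilon)$. So $\varphi$ is strictly decreasing there, which contradicts strict increase.

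\textbf{Part (iii).} Since $\varphi'$ is strictly increasing, it has at most one root, and any root is positive exactly when $\varphi'(0)<0$. If $d\ge d_0$, there is no root in $(0,\infty)$ by (ii). For the converse, $d<d_0$, I must also show that $\varphi'$ actually reaches zero, i.e.\ $\varphi'(n)>0$ for some $n$. This is the main obstacle, because $h'(n)A(n)\to 0$ is not automatic. Here $h$ is decreasing and strictly convex, so $h'$ is increasing and negative. Hence $h'(n)\to L\le 0$ as $n\to\infty$, and $h(n)\to h_\infty\ge 0$.
\begin{itemize}
\item If $L=0$: $A$ is bounded by $\sum_i q_i\phi(0)$, so $h'(n)A(n)\to 0$. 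Then $\varphi'(n)\to d>0$, assuming $d>0$ as a sampling cost.
\item If $L<0$: $h$ would eventually become negative, contradicting positivity of $h$. So this case cannot occur.
\end{itemize}
Thus $\varphi'$ eventually turns positive, and the intermediate value theorem gives a unique root $n_{min}\in(0,\infty)$. The sign pattern of $\varphi'$, negative before $n_{min}$ and positive after, gives strict decrease on $[0,n_{min})$ and strict increase on $(n_{min},\infty)$.

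The $L=0$ case rests on $d>0$, which is implicit in the model. Within the feasible box $[0,b]$ this limiting issue matters only for existence, not for the characterization on $[0,\infty)$.
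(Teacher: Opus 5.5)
Your proof is correct and follows the paper's argument essentially step for step. It uses the same formulas for $\varphi'$ and $\varphi''$, the identity $\varphi'(0)=d-d_0$, the limit $\varphi'(n)\to d$, and the intermediate value theorem, and it relies on $d>0$ just as the paper's proof does when it concludes $0<d<d_0$. Your convexity-plus-positivity argument that $h'(n)\to 0$ is tighter than the paper's, which only asserts this and also claims $h(n)\to\bar a>0$. That claim fails for $h=\sigma/\sqrt{1+\omega n}$ but is not needed, since boundedness of $A$ suffices, as you observe.
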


\proof{Proof.}

We have $\varphi(n)= dn+ h(n) \sum_{i \in \Cs{I}} q_i \Psi\big(r_i(n)\big)$, where $r_i(n):=\frac{g_i(x)}{h(n)}$. Define $B(n):=\sum_{i \in \Cs{I}}q_i r_i(n)^2\phi\big(r_i(n)\big)$. Then, we have 
%Using $\Psi'(z)=\Phi(z)$ and $\Psi(z)-z\Phi(z)=\phi(z)$, we have 
$\varphi'(n)=d+h'(n)A(n)$ and 
%\begin{equation*}
$\varphi''(n)=h''(n)A(n)+\frac{(h'(n))^2}{h(n)}B(n)$. Similar to the proof of Proposition~\ref{prop:convexity_n}, we have 
%\end{equation*}
%Since $q_i>0$, $\phi(\cdot)>0$, and $h(n)>0$, we have $A(n) >0$ and $B(n)>0$ for all $y \ge 0$. 
%By Lemma~\ref{lem:sigma-signs}, $h'(n)<0$ and $h''(n) > 0$  for $y \ge 0$. Thus, 
$\varphi''(n)>0$ for $n\ge 0$; hence, $\varphi$ is strictly convex on $[0, \infty)$, proving part (i).

Now, note that $\varphi'(0)=d+h'(0)A(0)=d-d_0$. Moreover, we have $h'(n)\to0$ as $n\to\infty$. By Assumption~\ref{assum:properties}, there exists a constant $0<\bar{a}<+\infty$ such that $h(n)\to\bar a$. Since $A(n)\to\sum_{i \in \Cs{I}}q_i\phi(g_i(x)/\bar a)<\infty$ as $n\to\infty$, we obtain $\lim_{n\to\infty}\varphi'(n)=d$. 

For part (ii), if $d\ge d_0$, then $\varphi'(0)\ge0$. Since $\varphi'$ is strictly increasing by part (i), $\varphi'(n)>0$ for all $n>0$. Therefore, $\varphi$ is strictly increasing on $[0,\infty)$. Conversely, suppose that $\varphi$ is strictly increasing on $[0,\infty)$. Suppose by contradiction that  $d<d_0$. Then, $\varphi'(0)<0$. By continuity of $\varphi'$, there exists $\epsilon>0$ such that $\varphi'(n)<0$ for all $n\in[0,\epsilon]$, contradicting that $\varphi$ is strictly increasing on $[0,\infty)$. Hence, $\varphi$ is strictly increasing on $[0,\infty)$ if and only if $d\ge d_0$.

For part (iii), if $0<d<d_0$, then $\varphi'(0)<0$, whereas $\lim_{y\to\infty}\varphi'(n)=d>0$ as $\lim_{y\to\infty} h'(n)=0$ and $\lim_{y\to\infty} A(n)=0$. Continuity and strict monotonicity of $\varphi'$ imply that there exists a unique $n_{min}\in(0,\infty)$ such that $\varphi'(n_{min})=0$. 
Conversely, suppose that $\varphi'$ has a unique root $n_{min}\in(0,\infty)$. Since $\varphi'$ is strictly increasing, $0=\varphi'(n_{min})>\varphi'(0)=d-d_0$, so $d<d_0$. Also, strict monotonicity and $\lim_{n\to\infty}\varphi'(n)=d$ give $0=\varphi'(n_{min})<d$, so $d>0$. Therefore, $0<d<d_0$. The sign pattern follows from the strict monotonicity of $\varphi'$: $\varphi'(n)<0$ for $n<n_{min}$ and $\varphi'(n)>0$ for $n>n_{min}$.
\hfill \Halmos
\endproof

\begin{lemma}[Sufficient Conditions for Global Minimizer of $\varphi(n)$]\label{lem:min-g}
Consider the function $\varphi(n)$, as defined in \eqref{eq:subproblem}. 
Define $d_{0}:=-  h'(0)A(0)>0$, where $A(n):=\sum_{i \in \Cs{I}}q_i\phi\left(\frac{g_i(x)}{h(n)}\right)>0$.
Consider a global minimizer $n^\ast$ of $\varphi$ over $[0,\infty)$. Moreover, consider $n_{min}$, as defined in Lemma \ref{lem:char-g}.  
\begin{enumerate}[label=(\roman*).]
  \item $n^\ast=0$ is the unique global minimizer of $\varphi$  if and only if $d \ge d_0$. 
  \item $n^\ast=n_{min}$ is the unique global minimizer of $\varphi$ if and only if $0<d<d_0$.  
\end{enumerate} 
\end{lemma}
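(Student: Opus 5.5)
The plan is to derive both parts directly from Lemma~\ref{lem:char-g}, using the strict convexity of $\varphi$ on $[0,\infty)$ established there. Throughout I use the standard fact that a strictly convex function has at most one global minimizer. This reduces each ``unique global minimizer'' claim to showing that the candidate point satisfies the appropriate first-order condition. I also treat $d>0$ as standing, since $d$ is a unit sampling cost. This is needed because Lemma~\ref{lem:char-g}(iii) is effectively stated for $0<d<d_0$.

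For part (i), sufficiency: if $d\ge d_0$, then Lemma~\ref{lem:char-g}(ii) says $\varphi$ is strictly increasing on $[0,\infty)$. Hence $\varphi(0)<\varphi(n)$ for every $n>0$, so $n^\ast=0$ is the unique global minimizer.

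For part (i), necessity: suppose $0$ is the unique global minimizer but $d<d_0$. Then $\varphi'(0)=d+h'(0)A(0)=d-d_0<0$. By continuity of $\varphi'$, the function $\varphi$ is strictly decreasing on some interval $[0,\epsilon]$, so $\varphi(\epsilon)<\varphi(0)$. This contradicts minimality of $0$.

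For part (ii), sufficiency: if $0<d<d_0$, Lemma~\ref{lem:char-g}(iii) gives a unique root $n_{min}\in(0,\infty)$ of $\varphi'$. It also gives that $\varphi$ is strictly decreasing on $[0,n_{min})$ and strictly increasing on $(n_{min},\infty)$. Therefore $\varphi(n)>\varphi(n_{min})$ for all $n\neq n_{min}$, and $n_{min}$ is the unique global minimizer.

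For part (ii), necessity: suppose $n_{min}$ is the unique global minimizer. It lies in the interior $(0,\infty)$, so $\varphi'(n_{min})=0$, and $n_{min}$ is a root of $\varphi'$. This root is unique because $\varphi'$ is strictly increasing by Lemma~\ref{lem:char-g}(i). The converse direction of Lemma~\ref{lem:char-g}(iii) then yields $0<d<d_0$. Equivalently, one can argue directly: $\varphi'(0)<\varphi'(n_{min})=0$ gives $d<d_0$, and $0=\varphi'(n_{min})<\lim_{n\to\infty}\varphi'(n)=d$ gives $d>0$.

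The main subtlety is existence of the global minimizer $n^\ast$ over the unbounded set $[0,\infty)$, together with the limit $\lim_{n\to\infty}\varphi'(n)=d$. Both are inherited from the proof of Lemma~\ref{lem:char-g}. I would take that limit as given: $h'(n)\to0$ by convexity plus monotonicity plus boundedness below, and $A$ stays bounded. With that limit, $\varphi$ grows at least linearly at infinity when $d>0$, so a minimizer exists.

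A small additional remark would note that the two cases $d\ge d_0$ and $0<d<d_0$ are exhaustive. This makes (i) and (ii) a complete dichotomy, and it is what later justifies invoking the interior first-order condition under Assumption~\ref{assum:regularity}.
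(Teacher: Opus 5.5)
Your proposal is correct and follows the paper's proof: both parts are read off from Lemma~\ref{lem:char-g}(ii)--(iii) together with the strict convexity in Lemma~\ref{lem:char-g}(i), and you spell out the converse directions (via $\varphi'(0)=d-d_0$ and the strict monotonicity of $\varphi'$) that the paper leaves implicit. Your justification of $\lim_{n\to\infty}\varphi'(n)=d$ via $h'(n)\to 0$ and boundedness of $A$ is sounder than the paper's appeal to $h(n)\to\bar a>0$, which fails for, e.g., $h(n)=\sigma/\sqrt{1+\omega n}$, and your remark that a minimizer exists when $d>0$ addresses a point the paper does not.
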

\proof{Proof.}
By Lemma \ref{lem:char-g}.(ii), $\varphi$ is strictly increasing on $[0, \infty)$ if and only if $d \ge d_0$. Thus, $n^\ast=0$ is the unique global minimizer of $\varphi$ over $[0,\infty)$.
On the other hand, by Lemma \ref{lem:char-g}.(iii), $\varphi'$ has a unique root $n_{min}$ on $[0,\infty)$ if and only if $0<d< d_0$. As $\varphi$ is strictly convex on by Lemma \ref{lem:char-g}.(i), $n_{min}$ is the unique global minimizer over $[0,\infty)$ if and only if $0<d< d_0$.
\hfill \Halmos
\endproof

% \begin{remark}
%     Lemma \ref{lem:min-g} implies that under Assumption~\ref{assum:regularity}, $\theta \in \Cs{T}$ at which $n^\ast(\theta)$ is an interior point, i.e., $n^\ast(\theta) \in(0,b)$, satisfies  $0<d < d_0$. 

% \end{remark}

\begin{corollary}[Necessary Conditions for Global Minimizer of $\varphi(n)$]\label{cor:min-g}
Consider the function $\varphi(n)$, as defined in \eqref{eq:subproblem}. 
Consider a global minimizer $n^\ast$ of $\varphi$ over $[0,\infty)$. Moreover, consider $n_{min}$, as defined in Lemma \ref{lem:char-g}.  
If $n^\ast=0$, we have $\varphi'(0) y \ge 0$ for every $y \in [0,\infty)$ and and $\varphi$ satisfies a local linear growth around $n^\ast$. If $n^\ast=n_{min}$, we have $\varphi'(n_{min})=0$, $\varphi''(n_{min})>0$, and $\varphi$ satisfies a local quadratic growth around $n^\ast$. 
\end{corollary}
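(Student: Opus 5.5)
The plan is to split into the two cases of Lemma~\ref{lem:min-g} and, in each, derive the first-order and growth conditions from the derivative formulas in the proof of Lemma~\ref{lem:char-g}: $\varphi'(n)=d+h'(n)A(n)$ and $\varphi''(n)=h''(n)A(n)+\frac{(h'(n))^2}{h(n)}B(n)>0$.

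\emph{Case $n^\ast=0$.} By Lemma~\ref{lem:min-g}(i) this happens exactly when $d\ge d_0$, so $\varphi'(0)=d-d_0\ge 0$. Hence $\varphi'(0)\,y\ge 0$ for every $y\in[0,\infty)$; this is the first-order condition over the tangent cone $[0,\infty)$ of the feasible set at $0$. For linear growth I would use convexity of $\varphi$ (Lemma~\ref{lem:char-g}(i)) together with $\varphi''>0$:
\begin{equation*}
\varphi(n)-\varphi(0)=\int_0^n \varphi'(t)\,dt \ \ge\ \varphi'(0)\, n,
\end{equation*}
since $\varphi'$ is increasing.

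\begin{itemize}
\item If $d>d_0$, this gives $\varphi(n)\ge\varphi(0)+(d-d_0)n$, which is linear growth with constant $d-d_0>0$, valid globally.
\item If $d=d_0$, then $\varphi'(0)=0$ and only quadratic growth holds, using $\varphi''(0)>0$ as in the next case.
\end{itemize}

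The boundary value $d=d_0$ is therefore the main obstacle: strictly linear growth fails there. I would resolve it in one of two ways. Either read ``local linear growth'' in the weak sense $\varphi(n)-\varphi(0)\ge \varphi'(0)\,n$, which holds for all $d\ge d_0$, or note that in the case $d=d_0$ one has $\varphi(n)-\varphi(0)\ge \tfrac{c}{2}n^2$ locally, and state that.

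\emph{Case $n^\ast=n_{min}$.} By Lemma~\ref{lem:min-g}(ii), $0<d<d_0$, and by Lemma~\ref{lem:char-g}(iii), $n_{min}$ is the interior root, so $\varphi'(n_{min})=0$. Also $\varphi''(n_{min})>0$ by the formula above, using $A,B>0$, $h''>0$, $h>0$ and $h'<0$ from Assumption~\ref{assum:properties}. Since $h$ is continuously differentiable and $\varphi''$ is continuous wherever $h''$ is, there is a neighborhood $U$ of $n_{min}$ with $\varphi''\ge c:=\tfrac12\varphi''(n_{min})>0$ on $U$. Taylor's theorem with integral remainder then gives
\begin{equation*}
\varphi(n)\ \ge\ \varphi(n_{min})+\tfrac{c}{2}(n-n_{min})^2 \qquad \text{for } n\in U,
\end{equation*}
which is local quadratic growth.

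A minor caveat is that Assumption~\ref{assum:properties} only guarantees $h\in C^1$ with strict convexity, not $h\in C^2$. If $h''$ is not available, I would replace the second-derivative argument by strong monotonicity of $\varphi'$ near $n_{min}$.
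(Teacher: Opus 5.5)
Your proposal is correct. For $n^\ast=n_{min}$ it is the paper's argument: continuity of $\varphi''$ at $n_{min}$ plus a second-order Taylor bound. Your uniform bound $\varphi''\ge\tfrac12\varphi''(n_{min})$ on a whole neighborhood is what the paper's ``there exists $\hat n$'' step actually needs.

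You depart from the paper in the case $n^\ast=0$, and there your version is the more accurate one. The paper asserts $\varphi'(0)>0$ ``by Lemma~\ref{lem:char-g}(i)--(ii)''. It then derives $\varphi(n)\ge\varphi(0)+\tfrac12\varphi'(0)\,n$ on a small interval from continuity of $\varphi'$. But Lemma~\ref{lem:min-g}(i) only gives $d\ge d_0$, i.e.\ $\varphi'(0)=d-d_0\ge 0$, and strict monotonicity of $\varphi$ does not force $\varphi'(0)>0$.

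As you observe, at $d=d_0$ one has $\varphi'(0)=0$. Hence $\varphi(n)-\varphi(0)=\int_0^n\varphi'=o(n)$ by continuity of $\varphi'$, and no linear growth with a positive constant is possible. The corollary as stated therefore fails at that one parameter value.

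Of your two fixes, only the second has content: ``linear growth'' with constant $\varphi'(0)=0$ is just $\varphi(n)\ge\varphi(0)$. The correct statement is linear growth for $d>d_0$ and quadratic growth for $d=d_0$. Nothing downstream is affected, since the proof of Theorem~\ref{thm:min_convergence} only uses that $n^\ast$ is a strict local minimizer.

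For $d>d_0$, your monotonicity argument $\varphi(n)-\varphi(0)=\int_0^n\varphi'\ge\varphi'(0)\,n$ is also simpler than the paper's. It gives the bound globally with constant $d-d_0$, rather than locally with $\tfrac12\varphi'(0)$.

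Your $C^2$ caveat is also legitimate. Assumption~\ref{assum:properties} gives only $h\in C^1$, and strict convexity does not give $h''>0$, yet the paper uses both.

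The fallback you sketch goes through. For $n>m$,
$\varphi'(n)-\varphi'(m)=[h'(n)-h'(m)]A(n)+h'(m)[A(n)-A(m)]\ge -h'(m)\,[A(m)-A(n)]$.
Moreover $A'=(h'/h)B$ requires only $h\in C^1$. So $\varphi'$ is strongly monotone near $n_{min}$ (and near $0$ when $d=d_0$) as long as $B>0$, i.e.\ some $g_i(x)\neq 0$, an assumption both you and the paper already rely on. Integrating then gives the quadratic growth without ever invoking $\varphi''$.
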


\proof{Proof.}
Suppose $n^\ast=0$. By Lemma \ref{lem:char-g}.(i)--(ii), $\varphi'(0) >0$  and $\varphi''(0)>0$. Therefore,  $\varphi'(0) n \ge 0$ for every $n \in [0,\infty)$. Moreover, because $\varphi'$ is continuous, there exists $b>\varepsilon>0$ and $\hat n \in [0,\varepsilon)$ such that  $\varphi'(\hat n)\ge \varphi'(0)/2$. Therefore, by the mean value theorem,  $\varphi(n) \ge \varphi(0) + \frac{\varphi'(0)}{2} n$ for $n \in  [0,\varepsilon)$, implying a local linear growth around $n^\ast=0$. 

Now, suppose $n^\ast=n_{min}$. By Lemma \ref{lem:char-g}.(iii), $\varphi'(n_{min})=0$ and by Lemma \ref{lem:char-g}.(i), $\varphi''(n_{min})>0$. Moreover, because $\varphi''$ is continuous, there exists $b>\varepsilon>0$ and $\hat n \in \Bs{N}_{\varepsilon}(n_{min})$ such that  $\varphi''(\hat n)\ge \varphi''(n_{min})/2$.  Therefore, by the mean value theorem,  $\varphi(n) \ge \varphi(n_{min}) + \frac{\varphi''(n_{min})}{2} (n-n_{min})^2$ for $n \in \Bs{N}_{\varepsilon}(n_{min}) \cap [0,b]$, implying a local quadratic growth around $n^\ast$. 
\hfill \Halmos
\endproof

%\subsection{Partial Derivatives of Objective Function}
%\label{secEC:obj_derivative}

\begin{lemma}[Signs of Partial Derivatives of the Objective Function]\label{lem:g-signs}
%\begin{enumerate}[label=(\roman*).]
    %\item 
    (i) $\frac{\partial \varphi}{\partial \sigma}(n;\sigma)>0$ for all $n \ge 0$, and 
    %\item 
    (ii) $\frac{\partial^2 \varphi}{\partial n \partial \sigma}(n;\sigma)<0$ for all $n\ge 0$. 
%\end{enumerate}
\end{lemma}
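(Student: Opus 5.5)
The plan is a direct computation. For both parts we differentiate $\varphi(n;\sigma)=dn+h(n;\sigma)\sum_{i\in\Cs{I}}q_i\Psi\big(g_i(x)/h(n;\sigma)\big)$ through its dependence on $h$. We then read off signs from Assumption~\ref{assum:properties}, from $q_i>0$, and from positivity of $\phi$. The linear term $dn$ does not depend on $\sigma$, so it drops out of both derivatives.

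\textbf{Part (i).} First we record a one-variable identity. For fixed $g$ and $\alpha>0$,
\[
\frac{d}{d\alpha}\Big[\alpha\Psi(g/\alpha)\Big]=\Psi(r)-r\Phi(r)=\phi(r), \qquad r=g/\alpha .
\]
This uses $\Psi'=\Phi$ and $\Psi(z)-z\Phi(z)=\phi(z)$, the same facts used in the proof of Proposition~\ref{prop:convexity_n}. By the chain rule,
\[
\frac{\partial \varphi}{\partial\sigma}(n;\sigma)=\frac{\partial h}{\partial\sigma}(n;\sigma)\,A(n),
\]
with $A(n)=\sum_{i\in\Cs{I}} q_i\phi(r_i(n))$ as in Lemma~\ref{lem:char-g}. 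Assumption~\ref{assum:properties}(ii) gives $\partial h/\partial\sigma>0$, and $A>0$ because $q_i>0$ and $\phi>0$. Hence part (i) holds for all $n\ge0$.

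\textbf{Part (ii).} We differentiate $\varphi'(n)=d+h'(n)A(n)$, taken from the proof of Lemma~\ref{lem:char-g}, with respect to $\sigma$. This relies on $h$ being smooth enough that the mixed partial is well defined, which Assumption~\ref{assum:properties}(iii) implicitly presumes. The dependence of $A$ on $\sigma$ enters only through $h$. Using $\phi'(z)=-z\phi(z)$ and $\partial r_i/\partial h=-r_i/h$, we get
\[
\frac{\partial A}{\partial h}=\sum_{i\in\Cs{I}} q_i\big(-r_i\phi(r_i)\big)\Big(-\frac{r_i}{h}\Big)=\frac{B(n)}{h(n)},
\]
where $B(n)=\sum_{i\in\Cs{I}} q_i r_i^2\phi(r_i)$. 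Therefore
\[
\frac{\partial^2\varphi}{\partial n\,\partial\sigma}=\frac{\partial^2 h}{\partial n\,\partial\sigma}\,A(n)+\frac{\partial h}{\partial n}\,\frac{\partial h}{\partial \sigma}\,\frac{B(n)}{h(n)} .
\]
The first term is strictly negative by Assumption~\ref{assum:properties}(iii) together with $A>0$. In the second term, $h_n<0$ by part (i) of the assumption, $h_\sigma>0$ by part (ii), $h>0$, and $B\ge0$, so the term is nonpositive. The sum is therefore strictly negative.

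The main point needing care is the bookkeeping in part (ii): correctly differentiating $A$ through $h$, and noting that only $B\ge 0$ is needed. This matters because $B$ could vanish if all $g_i(x)=0$, but the first term alone already supplies the strict sign.
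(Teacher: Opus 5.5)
Your proof is correct and follows the same route as the paper. You differentiate through $h$ using $\Psi'=\Phi$ and $\Psi(z)-z\Phi(z)=\phi(z)$ to obtain $\frac{\partial\varphi}{\partial\sigma}=\frac{\partial h}{\partial\sigma}A$, then differentiate $\frac{\partial h}{\partial n}A$ in $\sigma$ and read off the signs from Assumption~\ref{assum:properties}.

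Your cross term $\frac{\partial h}{\partial n}\frac{\partial h}{\partial\sigma}B/h$, whose weights are $q_i g_i(x)^2\phi(r_i)/h^3$, is the correct expression. The paper instead writes $g_i(x)$ in place of $g_i(x)^2$ and then appeals to $g_i(x)\ge 0$, which need not hold, since $g_{ij}(x)=\hat\xi_{\pi(k)j}(x)-\mu_j$ can be negative. Your version therefore needs no sign assumption on $g_i$.
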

\proof{Proof.}
We have $\varphi(n;\theta)= dn+ h(n;\sigma) \sum_{i \in \Cs{I}} q_i \Psi\big(r_i(n;\sigma)\big)$, where $r_i(n;\sigma):=\frac{g_i(x)}{h(n;\sigma)}$. Define
%\begin{equation*}
$A(n;\sigma):=\sum_{i \in \Cs{I}}q_i\phi\big(r_i(n;\sigma)\big)$. % and $B(n;\theta):=\sum_{i \in \Cs{I}}q_i r_i(n;\theta)^2\phi\big(r_i(n;\theta)\big)$. 
The first partial derivative with respect to $\sigma$ is
%\begin{equation*}
$\frac{\partial \varphi}{\partial \sigma}(n;\sigma)
=\frac{\partial h}{\partial \sigma}(n;\sigma)\sum_{i \in \Cs{I}}q_i\Big[\Psi\big(r_i(n;\sigma)\big)-r_i(n;\sigma)\Phi\big(r_i(n;\sigma)\big)\Big]
=\frac{\partial h}{\partial \sigma}(n;\sigma) A(n;\sigma)$.
%using $\Psi'(z)=\Phi(z)$ and $\Psi(z)-z\Phi(z)=\phi(z)$. 
By direct differentiation, we obtain
%\begin{equation*}
$\frac{\partial^2 \varphi}{\partial n \partial \sigma}(n;\sigma)
= \frac{\partial^2 h}{\partial n \partial \sigma}(n;\sigma)A(n;\sigma)
+ \frac{\frac{\partial h}{\partial n}(n;\sigma)\frac{\partial h}{\partial \sigma}(n;\sigma)}{h(n;\sigma)^3}
\sum_{i \in \Cs{I}} q_i g_i(x) \phi \big(r_i(n;\sigma)\big)$.
%\end{equation*}
Note that $\phi(\cdot)>0$, $h(\cdot)>0$, and $q_i>0$ and $g_i(x)\ge0 $ for all $i \in \Cs{I}$. Hence, the sign of $\frac{\partial \varphi}{\partial \sigma}(n;\sigma)$ equals the sign of $\frac{\partial h}{\partial \sigma}(n;\sigma)$; which is positive by Assumption~\ref{assum:properties}. This assumption also ensures that 
$\frac{\partial h}{\partial n}(n;\sigma)<0$ and $\frac{\partial h^2}{\partial n \partial \sigma}(n;\sigma)<0$ for $n\ge 0$, which prove (ii). 
\hfill \Halmos
\endproof

%\subsection{Proofs of Theorems}
%\label{secEC:cs_proofs}
We are now ready to prove the main theorems. 

\proof{Proof of Theorem \ref{thm:cs_min}.}
The first-order condition is $\frac{\partial \varphi}{\partial n}(n;\theta)=0$ at $n=n^\ast(\theta)$. 
Differentiating with respect to $\theta$ gives
%\begin{equation*}
$\frac{\partial^2 \varphi}{\partial n^2}(n;\theta) \frac{\partial n^\ast}{\partial\theta}
+\frac{\partial^2\varphi}{\partial n \partial \theta}( n;\theta)=0$
%\end{equation*}
at $n=n^\ast(\theta)$.
Hence,
%\begin{equation*}
$\frac{\partial n^\ast}{\partial\theta}
=-\frac{\partial^2 \varphi}{\partial n \partial \theta}(n;\theta)/{\frac{\partial^2 \varphi}{\partial n^2}( n;\theta)}$.
%\end{equation*}
Recall that $\frac{\partial^2 \varphi}{\partial n^2}( n;\theta)>0$ at $n=n^\ast(\theta)$ by Corollary~\ref{cor:min-g}. 
For $\theta=d$, $\frac{\partial n^\ast}{\partial d}<0$  
since $\frac{\partial^2 \varphi}{\partial n \partial d}(n;d)=1$. This proves part (i). For $\theta=\sigma$, the sign of $\frac{\partial n^\ast}{\partial\theta}$ is determined by the opposite sign of the corresponding cross-partial derivative, as given by Lemma~\ref{lem:g-signs}, proving part (ii).
\hfill \Halmos
\endproof

\proof{Proof of Theorem \ref{thm:cs_value}}
By definition, $V^\ast(\theta):=\varphi(n^\ast(\theta);\theta)$.
Using the chain rule, we have 
%\begin{equation*}
$\frac{\partial V^\ast}{\partial \theta}
=\frac{\partial \varphi}{\partial n}(n; \theta)\frac{\partial n^\ast}{\partial \theta}
+\frac{\partial \varphi}{\partial \theta}(n; \theta)$
%\end{equation*}
at $n=n^\ast(\theta)$.
Recall that by the first-order condition, we have $\frac{\partial \varphi}{\partial \theta}\big(n^\ast(\theta);\theta\big)=0$. Hence, $\frac{\partial V^\ast}{\partial \theta}
=\frac{\partial \varphi}{\partial \theta}(n; \theta)
$. 
In particular, $\frac{\partial V^\ast}{\partial d}= y >0$ since $\frac{\partial \varphi}{\partial d}=y$. For $\theta= \sigma$, the sign of $\frac{\partial \varphi}{\partial \sigma}(n; \sigma)$ is positive by Lemma~\ref{lem:g-signs}. This completes the proof. 
\hfill \Halmos
\endproof

%%%%%%%%%%%%%%%%%%%%%%%%%%%%%%%%%%%%%%%%%%%%%%%%%
\section{Solution Algorithms}
\label{sec: algorithms}
%%%%%%%%%%%%%%%%%%%%%%%%%%%%%%%%%%%%%%%%%%%%%%%%%

The nonconvex function $f(x,n)$, defined in \eqref{eq: abstract_model_f}, poses computational challenges for solving the nonconvex MIP formulation \eqref{eq: abstract_model}, whose objective function is $F(x,n)$. Despite the nonconvexity of $f$, the objective exhibits blockwise convexity properties: $F(\cdot,n)$ is convex for every $n \in \Cs{N}$ (Proposition \ref{prop:convexity_x}), while $F(x,\cdot)$ is strictly convex for every $x \in \Cs{X}$ (Proposition \ref{prop:convexity_n}). 
We exploit these unique  properties to develop an alternating optimization procedure. However, the nonlinear structure of $f$, primarily due to the function $\Psi$, defined in \eqref{eq:Psi}, and its product with $h(n,\;\sigma)$, still remains computationally challenging.
 
In this section, we develop solution algorithms to solve \eqref{eq: abstract_model} based on (linearizable) approximations of $f(x,n)$ and, consequently, of the objective function $F(x,n)$. 
Exploiting the finiteness of the discrete facility decisions $x$ and the separability of $F$ in the continuous sampling decisions $n$ for a fixed $x$, we design {\it proximal} block coordinate descent algorithms. We prove that the resulting iterates converge to a local minimizer of $F$ and establish the \emph{consistency} of the corresponding sets of local minimizers/stationary points.
We consider two approximations to the function $f(x,n)$: 
\begin{itemize}
    \item {\it sample average approximation} (SAA) using {\it one} set of $M$ realizations from $\textrm{N}(0,1)$ to handle $\Psi\left(\frac{g_{ij}(x)}{h_j(n_j;\sigma_j)}\right)$ for all $i \in \Cs{I}$ and $j \in \Cs{J}$, given that $\Psi(z)=\e{(\zeta+z)_{+}}$, where $\zeta \sim \textrm{N}(0,1)$ and $(a)_{+}=\max\{0,a\}$ (see Lemma \ref{lem:SAA}), 
    \item {\it piece-wise linear approximation} (PWLA) using {\it one} set of $M$ breakpoints to interpolate $\Psi\left(\frac{g_{ij}(x)}{h_j(n_j;\sigma_j)}\right)$ for all $i \in \Cs{I}$ and $j \in \Cs{J}$. 
\end{itemize}
These approximations may include some (mixed-integer) auxiliary variables and (linear) constraints. Nevertheless, given a fixed $x$, $n$, and $M$, the auxiliary variables are uniquely determined by the constraints. Hence, we suppress them in our notation. % and simply denote such an approximation by $K(x,n;M)$. 

We are now ready to present the proposed algorithm to solve \eqref{eq: abstract_model}, or the resulting problems using either SAA or PWLA, as outlined in Algorithm~\ref{alg:bcd}. 
Given an initial point $n^0$, the algorithm iterates through the following steps:

{\bf $x$-step: MIP Update for $x$}:
Solve for $x^{k+1}$ given $n^k$:
\begin{equation}
\label{eq:x_step}
    x^{k+1} = T_M(n^k) \in  \argmin_{x \in \Cs{X}} P(x, n^k;M),
\end{equation}
where $P(x, n;M)$ is an approximation of $F(x,n)$, parametrized by $M \in \Bs{N}$, with the property that $P(\cdot, n;M)$ is convex for every $n \in \Cs{N}$ and $M \in \Bs{N}$. 
Thus, the $x$-step is a pure-integer convex program. We break the tie with a Lexicographical ordering, e.g., $x^1 \in \Cs{X}$ is preferred over $x^2 \in \Cs{X}$ if 
there exists an index $i^\ast \in \Cs{I}$ such that $x^1_{i^\ast}=1$ and $x^2_{i^\ast}=0$, with $|x^1_{i}-x^2_i|=0$ for all $i<i^\ast$. That is, $T_M(n^k): \Cs{N} \to \Cs{X}$ is a single-valued mapping (see Assumption~\ref{assum:stability}). 
%Function $P(x,n;M)$ is defined as $P(x,n;M)= \min_{u \in \Cs{U}(x,n)} \ K(x,n,u;M)$, where $\Cs{U}(x,n)$ indicates the tie between auxiliary decision variables $u$ and the original decision variables $x$ and $y$. It is easy to verify that if for a given $y$ and $M$, $K(\cdot,n,\cdot;M)$ is convex (jointly in $x$ and $u$) and $\Cs{U}(x,n)$ is a convex set, then $P(\cdot,n;M)$ is convex. 

{\bf $n$-step: Proximal Continuous Update for $n$}:
Solve for $n^{k+1}$ given $x^{k+1}$:

\begin{equation}
\label{eq: n_step}
    n^{k+1} \in \argmin_{n \in \Cs{N}}  \left\{ Q(x^{k+1}, n;M) + \frac{\lambda_k}{2} \|n - n^k\|^2 \right\},
\end{equation}
where $\lambda_k > 0$ and $Q(x, n;M)$ is an approximation of $F(x,n)$, parametrized by $M \in \Bs{N}$, with the property that $Q(x,\cdot;M)$ is convex (not necessarily strictly convex) and additive for every $x \in \Cs{X}$ and $M \in \Bs{N}$, i.e., $Q(x, n;M):=\sum_{j \in \Cs{J}} Q_j(x, n_j;M)$. 
Hence, we can simply write the $n$-step as 
\begin{equation*}
    n^{k+1}_j \in \argmin_{n_j \in [0,b]} \left\{ Q_j(x^{k+1}, n_j;M) + \frac{\lambda_k}{2} (n_j - n^k_j)^2 \right\},
\end{equation*}
for each $j \in \Cs{J}$. 

We make the following local stability assumption about the singled-valued mapping $T_{M}$. 
\begin{assumption}[Local Stability Condition]
    \label{assum:stability}
    If the sequence $\{n^k\}$ converges to $\bar n$, the single-valued mapping $T_{M}$ is locally stable at $\bar n$; that is, there exist $\bar x \in\Cs{X}$ and sufficiently large $K$ such that for all $k > K$, we have $\bar x = T_M(n^k) \in \argmin_{x \in \Cs{X}} P(x,n^k;M)$. % for all $n\in\Cs{N}\cap\Bs{N}_\rho(\bar n)$. 
\end{assumption}
%Assumption~\ref{assum:properties} implies that  $x^{k+1}=x^\ast$ for all sufficiently large $k$. 
\begin{remark}
\label{rem:stability}
By the continuity of $P(z,\cdot;M)$ and finiteness of $\Cs{X}$, Assumption~\ref{assum:properties} holds if   $\bar x$ is the unique minimizer of $P(\cdot,\bar n;M)$ over $\Cs{X}$. Local stability also holds
when multiple minimizers remain tied in a neighborhood of $\bar n$ and the
tie-breaking rule selects the same $x$ throughout that neighborhood.
\end{remark}

Theorem~\ref{thm:min_convergence} establishes convergence of Algorithm \ref{alg:bcd}, when both $P(\cdot;M)$ and $Q(\cdot; M)$ are the same as $F$; this is an algorithm to solve the {\it original} problem \eqref{eq: abstract_model}, where the optimization problem in each block is nonlinear. 

\begin{theorem}[Convergence to a Local Min]
\label{thm:min_convergence}
    In Algorithm \ref{alg:bcd}, suppose that $P(\cdot;M)=Q(\cdot;M)=F$, as defined in \eqref{eq: abstract_model}. Suppose that the sequence $\{n^k\}$ is convergent. 
    Then, under Assumption~\ref{assum:stability}, Algorithm \ref{alg:bcd} converges to a local minimum of $F$ over $\Cs{X} \times \Cs{N}$. 
\end{theorem}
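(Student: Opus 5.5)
Under Assumption~\ref{assum:stability}, the $x$-iterates eventually freeze, and the tail of the algorithm then reduces to a proximal-point method on the strictly convex function $F(\bar x,\cdot)$ over the box $\Cs{N}$. The task is to show that the limit $(\bar x,\bar n)$ is (i) optimal in $n$ for fixed $\bar x$ and (ii) optimal in $x$ for all $n$ near $\bar n$. These two facts together give a local minimum over $\Cs{X}\times\Cs{N}$. The topology here is such that a neighborhood of $(\bar x,\bar n)$ is $\{\bar x\}\times(\Cs{N}\cap \Bs{N}_\rho(\bar n))$, because $\Cs{X}$ is finite and discrete.

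\textbf{Step 1: freezing $x$.} Let $\bar n=\lim n^k$. By Assumption~\ref{assum:stability}, there exist $\bar x$ and $K$ such that $x^{k+1}=T_M(n^k)=\bar x$ for all $k>K$. Passing to the limit in the optimality of $\bar x$ for $P(\cdot,n^k;M)=F(\cdot,n^k)$, and using continuity of $F(x,\cdot)$ for each of the finitely many $x\in\Cs{X}$, gives $F(\bar x,\bar n)\le F(x,\bar n)$ for all $x\in\Cs{X}$.

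\textbf{Step 2: stationarity in $n$.} For $k>K$, the $n$-step optimality condition reads
\[
0\in \nabla_n F(\bar x,n^{k+1})+\lambda_k(n^{k+1}-n^k)+\partial i_{\Cs{N}}(n^{k+1}).
\]
I will assume $\lambda_k$ is bounded; this is presumably a standing choice in Algorithm~\ref{alg:bcd}, and I would state it if it is not. Since $n^{k+1}-n^k\to0$, the proximal term vanishes in the limit. By continuity of $\nabla_n F(\bar x,\cdot)$ and outer semicontinuity of the normal cone of the box, we obtain $0\in\nabla_n F(\bar x,\bar n)+\partial i_{\Cs{N}}(\bar n)$. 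Proposition~\ref{prop:convexity_n} says $F(\bar x,\cdot)$ is strictly convex, so $\bar n$ is its unique global minimizer over $\Cs{N}$. Separability lets this be checked coordinatewise, using the one-dimensional structure in Lemma~\ref{lem:min-g} if convenient.

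\textbf{Step 3: combining into local optimality.} I expect this to be the main obstacle. Step 1 gives optimality of $\bar x$ only at $\bar n$, and ties may occur: some $x'\ne\bar x$ could satisfy $F(x',\bar n)=F(\bar x,\bar n)$ with $F(x',n)<F(\bar x,\bar n)$ for nearby $n$. I would handle this by taking the neighborhood in $\Cs{X}\times\Cs{N}$ whose $x$-component is the singleton $\{\bar x\}$, which is legitimate since $\Cs{X}$ is discrete. Then, for $(x,n)$ in that neighborhood,
\[
F(x,n)=F(\bar x,n)\ge F(\bar x,\bar n)
\]
by Step 2. This delivers a local minimum in the product topology. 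As a supplementary statement, I would also note that Step 1 shows $\bar x$ is a global minimizer of $F(\cdot,\bar n)$, so $(\bar x,\bar n)$ is a blockwise (partial) minimum. Under the uniqueness condition of Remark~\ref{rem:stability}, continuity yields a strict gap $F(x,n)>F(\bar x,n)$ for $x\ne\bar x$ and $n$ near $\bar n$, which gives the stronger statement where $x$ may vary.
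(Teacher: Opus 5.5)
Your proof is correct, and it reaches the conclusion by a more elementary route than the paper. Both proofs freeze $x$ via Assumption~\ref{assum:stability} and use the discreteness of $\Cs{X}$ exactly as in your Step~3. They differ in how they get stationarity and local minimality in $n$.

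\emph{The paper's route.} For stationarity it invokes the KL-based Lemma~\ref{lem:convergence}, which uses sufficient decrease, the KL inequality and finite length of the trajectory to re-derive convergence of $\{n^k\}$, and then passes to the limit in $\lambda_k(n^{k-1}-n^k)\in\partial G(n^k)$. It then upgrades stationarity to strict local minimality coordinate by coordinate via Lemma~\ref{lem:min-g} and Corollary~\ref{cor:min-g}: linear growth when $n_j^\ast=0$, quadratic growth when $n_j^\ast\in(0,b)$.

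\emph{Your route.} You use the convergence hypothesis directly to kill the proximal term. You then use Proposition~\ref{prop:convexity_n} to turn the first-order condition into global, indeed unique, optimality of $\bar n$ over $\Cs{N}$. This has three advantages:
\begin{enumerate}
\item Convergence of $\{n^k\}$ is already a hypothesis of this theorem, so the KL machinery is redundant here.
\item Your convexity argument also covers coordinates with $n_j^\ast=b$. The paper's dichotomy is built on minimizers over $[0,\infty)$ and omits that case.
\item Your Step~1 makes explicit that $\bar x$ minimizes $F(\cdot,\bar n)$ over $\Cs{X}$, which Definition~\ref{def:local_min} requires. The paper only records this at $n^k$ and leaves the limit implicit.
\end{enumerate}

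Your boundedness assumption on $\lambda_k$ is not an extra restriction relative to the paper. Its final step, $\dist(0,\partial G(n^k))\le\lambda_k\|n^k-n^{k-1}\|\to 0$, needs the same thing. Some such condition is genuinely necessary: a proximal-point iteration with $\sum_k 1/\lambda_k<\infty$ can converge to a non-minimizer of a strictly convex function. For example, with $F(n)=(n-1)^2$ one gets $1-n^{k+1}=\frac{\lambda_k}{2+\lambda_k}(1-n^k)$, which stalls short of $1$.

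What the paper's route buys is reusability. The KL lemma does not rely on smoothness or convexity of $Q(x^\ast,\cdot;M)$, which is why the paper can reuse it for the SAA variant (Theorem~\ref{thm:min_convergence_SAA}). The growth conditions it records are also what the Hausdorff argument in Theorem~\ref{thm:saa_stationary_consistency} later invokes. Your argument is tailored to the smooth, convex $F(\bar x,\cdot)$ and an a priori convergent sequence, which is exactly the setting of this theorem.
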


When $Q(\cdot;M)=F$, the corresponding $n$-step does not require a proximal term to guarantee sufficient decrease (see Lemma~\ref{lem:sifficient_decrease}), as $F(x,\cdot)$ is strictly convex for a fixed $x$ (see Proposition~\ref{prop:convexity_n}). 
Consequently, because both the $x$- and $n$-steps lead to unique solutions, standard block coordinate descent results guarantee convergence to a stationary point, which is also a local minimum; see, e.g., \cite{zangwill1969nonlinear} and \citet[Proposition~3.7.1]{bertsekas2016nonlinear}. 
In the remainder of this section, we develop two variants of Algorithm~\ref{alg:bcd}. To avoid solving nonlinear optimization problems in each block, we utilize linearizable approximations: (i) Both $P(\cdot;M)$ and $Q(\cdot;M)$ are formed by SAA (Section~\ref{sec: alg_SAA}) and (ii) $P(\cdot;M)$ is formed by PWLA and $Q(\cdot; M)=F$ (Section~\ref{sec: alg_PWL}). %, referred to as {\it PWLA}. 

% \begin{remark}
% Suppose that $Q(x, y;M)=F(x,n)$ for all $(x,n)$ and $M \in \Bs{N}$. 
% Using $\Psi'(z)=\Phi(z)$, the derivative of $F(x,n)$ with respect to $n_j$, $j \in \Cs{J}$, is
% \begin{equation*}
%     \frac{\partial F_j}{\partial n_j}(x,n_j)= d_j + h_j'(n_j;\sigma_j) \sum_{i \in \Cs{I}}  q_{ij} \phi \left(\frac{g_{ij}(x)}{h_j(n_j;\sigma_j)}\right). 
% \end{equation*}
% Hence, checking the first-order and boundary points yields the solution to the $n$-step.
% Otherwise, the $n$-update can be implemented via the projected (univariate) proximal-gradient step    $n^{k+1}_j = \proj_{[0,b]} \left( n^k_j - \frac{1}{\lambda_k}  \frac{\partial Q_j}{\partial n_j} (x^{k+1},n^k_j;M)\right)$ 
% for each $j \in \Cs{J}$. 
% Alternatively, one can use backtracking on $\lambda_k$. Choose $\hat n_j=\proj_{[0,b]} \left( n^k_j - \frac{1}{\lambda_k}  \frac{\partial Q_j}{\partial n_j} (x^{k+1},n^k_j;M)\right)$, 
% and if $Q_j(x^{k+1},\hat n_j;M) > Q_j(x^{k+1},n^k_j;M) - \frac{\lambda_k}{2} (\hat n_j-n^k_j)^2$
% for some $j \in \Cs{J}$, then update $\lambda_k \leftarrow 2\lambda_k$ and repeat until the sufficient decrease condition holds.
% \end{remark}

\begin{algorithm}[!tb] 
    \SetAlgoLined
        \KwIn{$n^{0} \in [0, b]^{|\Cs{J}|}$, tolerance $\epsilon > 0$, $k = 0$.}
	    \KwOut{Local min $(x^\ast,n^\ast)$.} 
    \While{not converged}
    {
    \textbf{1. Discrete Block Update ($x$-step):}
    Solve the mixed-integer convex program \eqref{eq:x_step} to global optimality, with tie broken with a Lexicographical ordering to get 
    $x^{k+1}$. % = T(n^k) \in \argmin_{x \in \Cs{X}} P(x,n^{k}; M)$. 
    
    \textbf{2. Proximal Continuous Block Update ($n$-step):}
    Solve the convex program \eqref{eq: n_step} to get 
    $n^{k+1}$. % \in \argmin_{n \in \Cs{N}}  \left\{ Q(x^{k+1}, n; M) + \frac{\lambda_k}{2} \|n - n^k\|^2 \right\}$.
    
    %$\lambda_{k+1} \leftarrow \lambda_k$. 

    \textbf{3. Convergence Check:}
    \If{$\|n^{k+1} - n^{k}\| < \epsilon$ and $x^{k+1} = x^{k}$}{ 
        \Return $(x^\ast,n^\ast)=(x^{k+1}, n^{k+1})$
    }
    $k \leftarrow k + 1$
    }
\caption{Proximal Block Coordinate Descent to Solve \eqref{eq: abstract_model}}
\label{alg:bcd}
\end{algorithm}

\subsection{Algorithm for the Sample Average Approximation}
\label{sec: alg_SAA}

As noted, $\Psi(z)=\e{(\zeta+z)_{+}}$, where $\zeta \sim \textrm{Normal}(0,1)$. % (see Lemma~\ref{lem:SAA}).
Therefore, one can approximate this function with a sample average of $M$ realizations. 
Consider the SAA of \eqref{eq: abstract_model}
\begin{equation}
\label{eq: abstract_SAA}
    \min_{x \in \Cs{X}, n \in \Cs{N}} \Big\{F_{M}(x, n) := e+ d^\top n + r^\top x + f_{M}(x, n)\Big\}
\end{equation}
where 
%\begin{equation*}
    $f_{M}(x, n) := \frac{1}{M}\sum_{t=1}^M \sum_{j \in \Cs{J}} h_j(n_j;\sigma_j) \sum_{i \in \Cs{I}}  q_{ij} \left(\zeta^t  + \frac{g_{ij}(x)}{h_j(n_j;\sigma_j)}\right)_{+}$,
%\end{equation*}
with $\zeta^t$, $t=1,\ldots,M$ being i.i.d. realizations from $\textrm{N}(0,1)$. 
%, and 
%\begin{equation*}
%    \ell(\xi^t;x,n)=  = \sum_{j \in \Cs{J}} \sum_{i \in %\Cs{I}} \left(\xi^t h_j(n_j) + g_{ij}(x)\right)_{+}.
%\end{equation*}
Note that $f_M(x,n)$ is an unbiased estimator of $f(x,n)$. %=\e{\ell(\xi;x,n)}$, where $\xi \sim \textrm{Normal}(0,1)$. 

\begin{theorem}
\label{thm:min_convergence_SAA}
    In Algorithm \ref{alg:bcd}, suppose that $P(\cdot;M)=Q(\cdot;M)=F_M$, as defined in \eqref{eq: abstract_SAA}. Suppose that the sequence $\{n^k\}$ converges to $n^\ast_M$, and $x_M^\ast$ be a stability point under Assumption~\ref{assum:stability}. Moreover, assume that $F_M(x_M^\ast, \cdot)$ satisfies a local quadratic growth at $n_M^\ast$ on $\Cs{N}$. %that there exist constants $\delta_M>0$ and $c_M>0$ such that
    %\begin{equation*}
    %$ F_M(x_M^\ast,n) \ge F_M(x_M^\ast,n_M^\ast) + \frac{c_M}{2}\|n-n_M^\ast\|^2 \qquad \forall n\in \Cs{N}\cap \Bs{N}_{\delta_M}(n_M^\ast)$.
    %\end{equation*}
    Then, Algorithm \ref{alg:bcd} converges to $(x_M^\ast,n_M^\ast)$, a local minimum of $F_M$ over $\Cs{X} \times \Cs{N}$ with probability $1$. 
\end{theorem}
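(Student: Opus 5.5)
The plan is to show that the limit point $(x_M^\ast,n_M^\ast)$ is a local minimizer of $F_M$ over $\Cs{X}\times\Cs{N}$. Since $\Cs{X}$ is finite, a local minimum in the mixed-integer sense means two things. First, no other $x$ in a (discrete) neighborhood improves at $n_M^\ast$. Second, $n_M^\ast$ is a local minimizer of $F_M(x_M^\ast,\cdot)$ on $\Cs{N}$, with the objective staying at least $F_M(x_M^\ast,n_M^\ast)$ for nearby $n$ and the discrete coordinate fixed. The ``probability $1$'' qualifier only enters through the fact that, for a.e. sample $\{\zeta^t\}$, $F_M$ is a well-defined finite function. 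It is continuous in $n$ (since $h_j$ is continuous and positive and $(\cdot)_+$ is continuous), and convex in $x$ and in $n$ separately. I will first verify these blockwise properties for $F_M$. Each term $h_j(n_j)\,(\zeta^t+g_{ij}(x)/h_j(n_j))_+ = (\zeta^t h_j(n_j)+g_{ij}(x))_+$ is convex in $x$, being a positive part of an affine function. It is also convex in $n_j$ whenever $\zeta^t h_j + g_{ij}$ is a convex function composed with the nondecreasing $(\cdot)_+$. That holds for $\zeta^t\ge 0$; for $\zeta^t<0$ the composition is not convex. So instead I will use the perspective form $h\,\Psi_M(g/h)$, where $\Psi_M(z)=\frac1M\sum_t(\zeta^t+z)_+$ is convex. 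The perspective of a convex function is jointly convex in $(g,h)$, and composing it with $h_j$ convex decreasing requires $\partial/\partial h$ of the perspective to be nonpositive, i.e.\ $\Psi_M(z)-z\Psi_M'(z)\le 0$. That fails, so rather than pursue this I will simply use the hypothesis of the theorem: local quadratic growth of $F_M(x_M^\ast,\cdot)$ at $n_M^\ast$ is assumed. This gives the $n$-block local optimality directly, together with convexity of $Q_j$ as stipulated in the algorithm setup.

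Next I will identify the limit. By Assumption~\ref{assum:stability}, there is $K$ with $x^{k+1}=T_M(n^k)=x_M^\ast$ for all $k>K$. Hence, for $k>K$, the $n$-step reads
\[
n^{k+1}\in\argmin_{n\in\Cs{N}}\Big\{F_M(x_M^\ast,n)+\tfrac{\lambda_k}{2}\|n-n^k\|^2\Big\}.
\]
Writing the optimality condition $0\in\partial_n F_M(x_M^\ast,n^{k+1})+\lambda_k(n^{k+1}-n^k)+N_{\Cs{N}}(n^{k+1})$ and passing to the limit with $n^k\to n_M^\ast$ and $\lambda_k$ bounded, the outer semicontinuity of the (Clarke) subdifferential of the locally Lipschitz $F_M(x_M^\ast,\cdot)$ and of the normal cone gives $0\in\partial_nF_M(x_M^\ast,n_M^\ast)+N_{\Cs{N}}(n_M^\ast)$. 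I expect this to be the main obstacle. Boundedness of $\lambda_k$ must be assumed or enforced, and stationarity alone does not give a minimum. Here the quadratic growth hypothesis does the work: it makes $n_M^\ast$ a strict local minimizer of $F_M(x_M^\ast,\cdot)$ on $\Cs{N}\cap\Bs{N}_\delta(n_M^\ast)$.

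For the $x$-block, I will pass to the limit in $F_M(x_M^\ast,n^k)\le F_M(x,n^k)$ for all $x\in\Cs{X}$ and $k>K$, using continuity of $F_M(x,\cdot)$, to obtain $x_M^\ast\in\argmin_{x\in\Cs{X}}F_M(x,n_M^\ast)$. To combine the two blocks into a joint local minimum, note that the neighborhood of $(x_M^\ast,n_M^\ast)$ in $\Cs{X}\times\Cs{N}$ with radius below $1$ fixes $x=x_M^\ast$, since distinct binary vectors are at distance at least $1$. So local minimality reduces to the $n$-block statement, and the quadratic growth yields $F_M(x_M^\ast,n)\ge F_M(x_M^\ast,n_M^\ast)$ there. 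Finally, convergence of the iterates $(x^k,n^k)\to(x_M^\ast,n_M^\ast)$ follows from eventual constancy of $x^k$ and the assumed convergence of $n^k$. All of this holds for every sample path on which $F_M$ is finite and continuous, which is a probability-one event.
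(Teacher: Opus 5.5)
Your argument is correct. It has the same skeleton as the paper: work pathwise on the sample, use Assumption~\ref{assum:stability} to freeze $x^k=x_M^\ast$, use the growth condition for local minimality in $n$, and use finiteness of $\Cs{X}$ to get a joint local minimum. Your route is more direct, though.

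The paper fixes a sample path and invokes Theorem~\ref{thm:min_convergence} with $F$ replaced by $F_M$. That proof obtains stationarity through the KL-based Lemma~\ref{lem:convergence}, which relies on sufficient decrease and Lemma~\ref{lem:KL}, before it uses the growth condition.

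You observe that convergence of $\{n^k\}$ and quadratic growth at $n_M^\ast$ are both hypotheses. The growth inequality alone therefore makes $n_M^\ast$ a strict local minimizer of $F_M(x_M^\ast,\cdot)$ on $\Cs{N}$. Stationarity is never needed, so your concern about bounded $\lambda_k$ is moot. You also derive $x_M^\ast\in\arg\min_{z\in\Cs{X}}F_M(z,n_M^\ast)$ by passing to the limit in $F_M(x_M^\ast,n^k)\le F_M(z,n^k)$. Definition~\ref{def:local_min} requires this step, but the paper leaves it implicit. The paper's route reuses a general convergence lemma; yours is self-contained and does not need $F_M$ to be KL or convex in $n$.

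On convexity, you have a sign flipped. Composing the perspective $h\,\Psi_M(g/h)$ with the convex $h_j$ requires it to be nondecreasing in $h$, that is,
$\Psi_M(z)-z\Psi_M'(z)=\frac{1}{M}\sum_t \zeta^t\,\mathbf{1}\{\zeta^t+z>0\}\ge 0$.
This can fail: for large $z$ it equals the sample mean of the $\zeta^t$. More directly, with $\zeta^t<0$ the term $(\zeta^t h_j(n_j)+g_{ij})_+$ is concave wherever it is positive.

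So your conclusion that $F_M(x,\cdot)$ need not be convex is right. It is also a fair caveat about the paper's requirement that $Q(x,\cdot;M)$ be convex. You should therefore drop your appeal to that stipulated convexity at the end of your first paragraph, since your proof never uses it.
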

Theorem \ref{thm:saa_stationary_consistency} states that any stationary point of the SAA problem \eqref{eq: abstract_SAA} converges (along subsequences) to a stationary point of the original problem \eqref{eq: abstract_model} as $M\to\infty$. %, under a quadratic growth condition for the original objective function, $F$. 
If, moreover, the sampled objective function $F_M$ satisfies a local quadratic growth condition, the result can be stated for local minimizers. 
\begin{theorem}[Consistency of Stationary Points/Local Minima under SAA]
\label{thm:saa_stationary_consistency}
Consider 
%\begin{equation*}
$\Cs{S}:=\Big\{(x,n)\in \Cs{X}\times \Cs{N}: 0\in \partial_n\big(F(x,\cdot)+i_{\Cs{N}}\big)(n), \ x \in \argmin_{z\in\Cs{X}} F(z,n)\Big\}$  
%\end{equation*}
and 
%\begin{equation*}
$\Cs{S}_M:=\Big\{(x,n)\in \Cs{X}\times \Cs{N}:0\in \partial_n\big(F_M(x,\cdot)+i_{\Cs{N}}\big)(n), \ x \in \argmin_{z\in\Cs{X}} F_M(z,n)\Big\}$, 
%\end{equation*}
the set of (block) stationary points of \eqref{eq: abstract_model} and \eqref{eq: abstract_SAA}, respectively. 
Then, every sequence $(x_M,n_M)\in \Cs{S}_M$ admits a subsequence converging to some $(x^\ast,n^\ast)\in \Cs{S}$, and $\sup_{(x,n)\in\Cs{S}_M}\dist\big((x,n),\Cs{S}\big)\to 0$ as $M\to\infty$. If, moreover, for $(x^\ast,n^\ast)\in\Cs{S}$, $x^\ast$ is the unique minimzer of $F(\cdot,n^\ast)$, 
the Hausdorff distance satisfies
%\begin{equation*}
$ d_H(\Cs{S}_M,\Cs{S})\to 0$ as $M\to\infty$,
%\end{equation*}
with probability $1$. 
If for every $(x_M,n_M)\in \Cs{S}_M$,  $F_M(x_M,\cdot)$ satisfies a local quadratic growth condition at $n_M$ on $\Cs{N}$, then $\Cs{S}$ and $\Cs{S}_M$ represent the set of local minimizers of $F$ and $F_M$, respectively.  
\end{theorem}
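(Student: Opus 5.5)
The plan is a uniform-convergence argument for $F_M$ and its $n$-derivative on the compact set $\Cs{X}\times\Cs{N}$, followed by a closedness argument for the stationarity conditions.

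\textbf{Step 1: uniform convergence.} First I would show that, with probability one, $F_M\to F$ and $\partial_n F_M\to \partial_n F$ uniformly on $\Cs{X}\times\Cs{N}$ (in the set-valued sense for the subdifferential).

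Since $\Cs{X}$ is finite, it suffices to fix $x$ and work on the compact interval $\Cs{N}=[0,b]^{|\Cs{J}|}$. The summand is
\begin{equation*}
h_j(n_j;\sigma_j)\,q_{ij}\,\Big(\zeta+\tfrac{g_{ij}(x)}{h_j(n_j;\sigma_j)}\Big)_+ = q_{ij}\big(\zeta h_j(n_j;\sigma_j)+g_{ij}(x)\big)_+ .
\end{equation*}
This is continuous in $n$, Lipschitz in $n$ with constant $q_{ij}|\zeta|\max|h_j'|$, and dominated by an integrable envelope because $h_j$ is bounded on $[0,b]$. A uniform strong law (e.g., Shapiro--Dentcheva--Ruszczy\'nski, Theorem 7.48) then gives $\sup_{n\in\Cs{N}}|F_M(x,n)-F(x,n)|\to0$ a.s.

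For the derivatives, $F_M(x,\cdot)$ is convex and separable. Its one-sided derivatives are $d_j+\frac1M\sum_t\sum_i q_{ij}\zeta^t h_j'(n_j)\,\one\{\zeta^t h_j+g_{ij}\ge 0\}$ (with $>$ for the other side). The event $\{\zeta h_j+g_{ij}=0\}$ has probability zero, so both one-sided derivatives converge uniformly to $\partial_{n_j}F(x,n)=d_j+h_j'(n_j)A_j(n_j)$, again by a uniform LLN for this Glivenko--Cantelli class. Equivalently, I could invoke Attouch's theorem: epi-convergence of convex functions implies graphical convergence of subdifferentials.

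\textbf{Step 2: outer limit.} Take $(x_M,n_M)\in\Cs{S}_M$. By compactness and finiteness of $\Cs{X}$, pass to a subsequence with $x_M\equiv x^\ast$ and $n_M\to n^\ast$.

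For the $x$-condition: for each $z\in\Cs{X}$ we have $F_M(x^\ast,n_M)\le F_M(z,n_M)$. Uniform convergence together with continuity of $F$ gives $F(x^\ast,n^\ast)\le F(z,n^\ast)$.

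For the $n$-condition: $0\in\partial_n F_M(x^\ast,n_M)+N_{\Cs{N}}(n_M)$. Graphical convergence of the subdifferentials and outer semicontinuity of the normal cone $N_{\Cs{N}}$ yield $0\in\nabla_nF(x^\ast,n^\ast)+N_{\Cs{N}}(n^\ast)$.

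Hence $(x^\ast,n^\ast)\in\Cs{S}$. Since every subsequence has a further subsequence converging into $\Cs{S}$ and all sets lie in a compact region, $\sup_{\Cs{S}_M}\dist(\cdot,\Cs{S})\to0$.

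\textbf{Step 3: inner limit (main obstacle).} For the Hausdorff statement I must show that every $(x^\ast,n^\ast)\in\Cs{S}$ is approximated by points of $\Cs{S}_M$.

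By Proposition~\ref{prop:convexity_n}, $F(x^\ast,\cdot)$ is strictly convex, so $n^\ast$ is its unique minimizer over $\Cs{N}$. Let $n_M$ be a minimizer of $F_M(x^\ast,\cdot)$ over $\Cs{N}$. Uniform convergence plus uniqueness of the limit minimizer gives $n_M\to n^\ast$.

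By uniqueness of $x^\ast$ in $\argmin F(\cdot,n^\ast)$, there is a strict gap $F(z,n^\ast)-F(x^\ast,n^\ast)\ge\gamma>0$ for all $z\ne x^\ast$ in the finite set $\Cs{X}$. Uniform convergence and continuity then give $x^\ast\in\argmin_z F_M(z,n_M)$ for all large $M$. Hence $(x^\ast,n_M)\in\Cs{S}_M$, and $d_H\to0$ follows; the set $\Cs{S}$ is finite, or at least compact, so uniformity over it is handled by finiteness of $\Cs{X}$ and uniqueness of $n^\ast$ per $x$.

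The delicate point is the a.s. uniform convergence of the nonsmooth subgradients. I would handle it through the one-sided-derivative Glivenko--Cantelli argument, using that the kinks occur on a null set.

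\textbf{Step 4: local minimizers.} For $\Cs{S}$, block stationarity combined with strict convexity in $n$ (Corollary~\ref{cor:min-g}) and finiteness of $\Cs{X}$ shows points of $\Cs{S}$ are local minimizers. Here a neighborhood of $x^\ast$ in the discrete set $\Cs{X}$ is $\{x^\ast\}$, so local optimality reduces to optimality in $n$, which convexity gives.

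For $\Cs{S}_M$, the assumed local quadratic growth of $F_M(x_M,\cdot)$ at $n_M$ plays the same role. The converse inclusion, local minimizers being stationary, is Fermat's rule for $F(x,\cdot)+i_{\Cs{N}}$ and $F_M(x,\cdot)+i_{\Cs{N}}$.
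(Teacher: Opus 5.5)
Your proof is correct, and its skeleton matches the paper's. Both arguments use a.s.\ uniform convergence of $F_M$ on $\Cs{X}\times\Cs{N}$; the paper proves this by a random-Lipschitz covering argument rather than citing a uniform SLLN. Both pass to a subsequence with $x_M$ constant and get $x$-optimality of the limit by the same $\varepsilon$-sandwich. Both obtain limiting stationarity the same way: Lemma~\ref{lem:saa_subdiff_consistency}, like your one-sided-derivative argument, applies a uniform SLLN to $q_{ij}\zeta h_j'(n_j)\one\{\zeta h_j(n_j)+g_{ij}>0\}$ and bounds the kink terms by a multiple of $\frac{1}{M}\max_t|\zeta^t|$.

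The genuine difference is the inner limit. The paper borrows a local quadratic growth of $F(x^\ast,\cdot)$ at $n^\ast$ from the proof of Theorem~\ref{thm:min_convergence}. It minimizes $F_M(x^\ast,\cdot)$ only over $\Cs{N}\cap\Bs{N}_\delta(n^\ast)$ and uses uniform accuracy $\frac{c}{8}\eta^2$ to force that minimizer into $\Bs{N}_\eta(n^\ast)$, making it a local, hence stationary, point. The supremum over $\Cs{S}$ is then handled by an unspecified ``compactness argument''. You instead use Proposition~\ref{prop:convexity_n} to identify stationarity in $n$ with global minimality, so $n^\ast$ is the unique minimizer of $F(x^\ast,\cdot)$ on $\Cs{N}$. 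Global minimizers of $F_M(x^\ast,\cdot)$ then converge to it by argmin stability. This needs no growth constant and yields $|\Cs{S}|\le|\Cs{X}|$, so the supremum over $\Cs{S}$ is a finite maximum; that cleanly settles the step the paper leaves vague. The paper's localized argument, by contrast, would still work if $n^\ast$ were only an isolated local minimizer of $F(x^\ast,\cdot)$.

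Two corrections are needed.

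First, $F_M(x,\cdot)$ is not convex for finite $M$. A sample with $\zeta^t<0$ contributes $q_{ij}\big(\zeta^t h_j(n_j)+g_{ij}(x)\big)_+$, which coincides with a strictly concave function wherever it is positive. For example, take $M=1$, $\zeta^1=-1$, $h_j(n)=1/\sqrt{1+n}$, $g_{ij}=0.9$. So the Attouch-theorem alternative is unavailable. Likewise, you should not identify $\partial_n F_M$ with the interval between one-sided derivatives. Say instead that the limiting subdifferential of this separable, piecewise-$C^1$ function is contained in that interval. Your uniform convergence of both one-sided derivatives then gives the same conclusion, and Step~3 never used convexity of $F_M$.

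Second, in Step~4 the converse inclusion requires the blockwise notion of Definition~\ref{def:local_min}, which builds in that $x$ minimizes $F(\cdot,n)$ over $\Cs{X}$. You instead invoke the purely topological notion, where neighborhoods of $x^\ast$ in $\Cs{X}$ are $\{x^\ast\}$. Under that notion, every pair $(x,n_x)$ with $n_x$ the minimizer of $F(x,\cdot)$ over $\Cs{N}$ would be a local minimizer. That set is generally strictly larger than $\Cs{S}$.
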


\subsection{Algorithm for the Piece-wise Linear Approximation}
\label{sec: alg_PWL}
Consider a set of $M$ breakpoints $z^1,\ldots, z^M$, where $\underline{u}=z^0 <z^1<\cdots< z^M=\overline{u}$, such that they cover an interval $[\underline{u},\overline{u}]$
containing
$\mathcal Z:=\left\{\frac{g_{ij}(x)}{h_j(n_j;\sigma_j)}:
x\in\Cs{X},\ n\in\Cs{N}, \; i\in\Cs{I}, \; j\in\Cs{J}\right\}$. Define $\kappa_M:=\max_{t\in\{0,\dots,M-1\}}(z^{t+1}-z^t)$, and suppose that $\kappa_M\to 0$ as $M\to\infty$. 
Let $\Psi_M$ denote the piecewise-linear approximation of $\Psi$ %, defined in \eqref{eq:Psi}, 
built on these breakpoints.
For a fixed $n\in\Cs{N}$, one can overestimate $F(x,n)$ with function $S_M(x,n)$ as   
\begin{equation}
    \label{eq: pwl}
    S_M(x,n):= e+ d^\top n + r^\top x +  \sum_{j \in \Cs{J}} h_j(n_j;\sigma_j) \sum_{i \in \Cs{I}}  q_{ij} \Psi_M\left(\frac{g_{ij}(x)}{h_j(n_j;\sigma_j)}\right),
\end{equation}
while for a fixed $x\in\Cs{X}$, use the objective $F(x,\cdot)$, yielding a hybrid framework. 
Specifically, we consider a variant of Algorithm~\ref{alg:bcd} in which $P(\cdot;M)=S_M$, as defined in \eqref{eq: pwl}, and $Q(\cdot;M)=F$, as defined in \eqref{eq: abstract_model}. Similar to the argument following Theorem~\ref{thm:min_convergence}, the $n$-step does not require a proximal term to guarantee convergence. %Theorem~\ref{thm:pwl_stationary_consistency} establishes the consistency of the stationary points.

\begin{theorem}[Consistency of Stationary Points under PWLA]
\label{thm:pwl_stationary_consistency}
Let 
%\begin{equation*}
   $\Cs{S}:=\Big\{(x,n)\in \Cs{X}\times \Cs{N}: 0\in \partial_n\big(F(x,\cdot)+i_{\Cs{N}}\big)(n),  \ x \in \argmin_{z\in\Cs{X}} F(z,n)\Big\}$
%\end{equation*}
and 
%\begin{equation*}
$\Cs{S}_M:=\Big\{(x,n)\in \Cs{X}\times \Cs{N}: 0\in \partial_n\big(F(x,\cdot)+i_{\Cs{N}}\big)(n), \ x \in \argmin_{z\in\Cs{X}} S_M(z,n)\Big\}$,   
%\end{equation*}
denote the set of (block) stationary points of \eqref{eq: abstract_model} and hybrid stationary points induced by $P(\cdot;M)=S_M$, as defined in \eqref{eq: pwl}, and $Q(\cdot;M)=F$, as defined in \eqref{eq: abstract_model}, in Algorithm \ref{alg:bcd}. 
%Assume that for every $(x^\ast,n^\ast)\in \Cs{S}$, the function $F(x^\ast,\cdot)$ satisfies a quadratic growth condition at $n^\ast$ on $\Cs{N}$.
Then, every sequence $(x_M,n_M)\in \Cs{S}_M$ admits a subsequence converging to some $(x^\ast,n^\ast)\in \Cs{S}$, and 
and $\sup_{(x,n)\in\Cs{S}_M}\dist\big((x,n),\Cs{S}\big)\to 0$ as $M\to\infty$. If, moreover, for $(x^\ast,n^\ast)\in\Cs{S}$, $x^\ast$ is the unique minimzer of $F(\cdot,n^\ast)$, 
the Hausdorff distance satisfies
%\begin{equation*}
$d_H(\Cs{S}_M,\Cs{S})\to 0$ as $M\to\infty$.
%\end{equation*}
\end{theorem}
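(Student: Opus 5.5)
The plan is to follow the same template as the SAA consistency result (Theorem~\ref{thm:saa_stationary_consistency}), replacing the probabilistic uniform law of large numbers with a deterministic uniform approximation bound for $\Psi_M$. Three ingredients are needed.

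\textbf{Uniform convergence of $S_M$ to $F$.} The set $\Cs{X}$ is finite and $\Cs{N}=[0,b]^{|\Cs{J}|}$ is compact. The function $h_j$ is continuous and positive on $[0,b]$, so the arguments $g_{ij}(x)/h_j(n_j;\sigma_j)$ lie in a compact set $\mathcal Z\subseteq[\underline u,\overline u]$. On this interval $\Psi$ is convex and smooth with $\Psi''=\phi\le 1/\sqrt{2\pi}$. The standard interpolation error bound then gives
\[
0\le \Psi_M-\Psi\le \kappa_M^2/(8\sqrt{2\pi}).
\]
It follows that
\[
\sup_{x\in\Cs{X},\,n\in\Cs{N}}|S_M(x,n)-F(x,n)|\le C\kappa_M^2\to 0,
\qquad C=\max_j\max_{n_j\in[0,b]}h_j(n_j;\sigma_j)\sum_i q_{ij}/(8\sqrt{2\pi}).
\]
The $n$-block condition in $\Cs{S}_M$ uses the exact $F$, so only this value-level convergence is needed. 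No derivative approximation is required, which makes the argument simpler than in the SAA case.

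\textbf{Outer semicontinuity (every sequence has a subsequence converging into $\Cs{S}$).} Take $(x_M,n_M)\in\Cs{S}_M$. Because $\Cs{X}$ is finite and $\Cs{N}$ is compact, a subsequence has $x_M\equiv x^\ast$ and $n_M\to n^\ast$.

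First, the $n$-stationarity passes to the limit. By Proposition~\ref{prop:convexity_n}, $F(x^\ast,\cdot)$ is strictly convex and $C^1$ on $\Cs{N}$. The condition $0\in\partial_n(F(x^\ast,\cdot)+i_{\Cs{N}})(n_M)$ reads $-\nabla_nF(x^\ast,n_M)\in N_{\Cs{N}}(n_M)$. Continuity of $\nabla_n F(x^\ast,\cdot)$ and closedness of the graph of the normal cone mapping of the box give the same condition at $n^\ast$.

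Second, the $x$-optimality passes to the limit. For every $z\in\Cs{X}$ we have $S_M(x^\ast,n_M)\le S_M(z,n_M)$. Applying the uniform bound and continuity of $F(z,\cdot)$ gives $F(x^\ast,n^\ast)\le F(z,n^\ast)$, so $(x^\ast,n^\ast)\in\Cs{S}$.

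The statement $\sup_{\Cs{S}_M}\dist(\cdot,\Cs{S})\to0$ then follows by a standard contradiction argument. If it failed, there would be a sequence in $\Cs{S}_M$ staying at distance at least $\epsilon$ from $\Cs{S}$, and this sequence would still have a subsequence converging into $\Cs{S}$, which is impossible. This step also needs $\Cs{S}$ closed, which follows by the same limiting argument applied to $F$.

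\textbf{Inner semicontinuity under the uniqueness hypothesis; this is the main obstacle.} Fix $(x^\ast,n^\ast)\in\Cs{S}$ with $x^\ast$ the unique minimizer of $F(\cdot,n^\ast)$. Then there is a gap $\delta>0$ with $F(z,n^\ast)\ge F(x^\ast,n^\ast)+\delta$ for all $z\ne x^\ast$. By continuity in $n$ and finiteness of $\Cs{X}$, the gap stays above $\delta/2$ on a neighbourhood $U$ of $n^\ast$. Once $2C\kappa_M^2<\delta/2$, the point $x^\ast$ is also the unique minimizer of $S_M(\cdot,n)$ for every $n\in U$.

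The $n$-condition in $\Cs{S}_M$ is exactly the $n$-condition in $\Cs{S}$, and for fixed $x^\ast$ it has a unique solution by strict convexity. That solution is $n^\ast$ itself. Hence $(x^\ast,n^\ast)\in\Cs{S}_M$ for all large $M$, which gives $\sup_{\Cs{S}}\dist(\cdot,\Cs{S}_M)\to 0$ pointwise.

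Converting this to the uniform statement in the Hausdorff distance is the delicate point. I would argue it via finiteness: for each $x\in\Cs{X}$, strict convexity gives a unique $n$-stationary point $\bar n(x)$, so $\Cs{S}$ is finite. Taking the maximum over finitely many $M$-thresholds then yields $d_H(\Cs{S}_M,\Cs{S})\to0$.
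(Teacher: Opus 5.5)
Your proposal is correct and follows the paper's proof essentially step for step. You show uniform convergence $S_M\to F$, using a second-derivative interpolation bound $O(\kappa_M^2)$ that is sharper than the paper's Lipschitz bound $C\kappa_M$; note that your constant should sum over $j$ rather than take the maximum over $j$. You then use subsequence extraction and closedness for the outer limit, and the uniqueness gap to get $(x^\ast,n^\ast)\in\Cs{S}_M$ for large $M$.

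Your extra observation is a genuine improvement. Strict convexity of $F(x,\cdot)$ allows at most one stationary $n$ for each $x\in\Cs{X}$, so $\Cs{S}$ is finite. This supplies the uniformity over $\Cs{S}$ that the paper only asserts by ``taking the supremum.'' Moreover, $\Cs{S}_M$ lies in the same finite candidate set $\{(x,\bar n(x)):x\in\Cs{X}\}$, so under the uniqueness hypothesis this even yields $\Cs{S}_M=\Cs{S}$ for all large $M$.
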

%\subsection{Extension}
% Suppose that 
% $y \in \Cs{N} = [0, b]^m \subset \Bs{R}^m$, and
% \begin{equation*}
%     f(x, y) = \sum_{j=1}^{m} h_j(n_j) \Psi\left(\frac{g_j(x)}{h_j(n_j)}\right)
% \end{equation*}
% For a fixed $x$, $f(x, y)$ is separable, i.e., $f(x, y)=\sum_{j=1}^{m} f_j(x,n_j)$. Hence, the $y$-step is separable, and we can extend the algorithm to this setting without any complication. Moreover,  the $y$-step can be implemented in parallel. This setting arises when we can uncorrelated prior.  

Theorem \ref{thm:pwl_stationary_consistency} states that any stationary point of the (hybrid) PWLA problem converges (along subsequences) to a stationary point of the original problem \eqref{eq: abstract_model} as $M\to\infty$. 

%%%%%%%%%%%%%%%%%%%%%%%%%%%%%%%%%%%%%%%%%%%%%%%%%\
\section{Numerical Results}
\label{sec: results}
%%%%%%%%%%%%%%%%%%%%%%%%%%%%%%%%%%%%%%%%%%%%%%%%%

In this section, we present the numerical study for the stochastic FLP with demand learning introduced in Section~\ref{sec: FL_Learning}. We explain the experiment design in Section~\ref{sec: design}. %how we generate the test instances and how deterministic benchmark data are converted into stochastic instances for our model. 
We then report the value of learning and comparative statics in Sections~\ref{sec: VoL} and \ref{sec: cs}, respectively. Finally, we present computational comparative results to test the efficacy of the proposed solution algorithms in Section~\ref{sec: comp}. %Then, we solve the closed-form formulation discussed in Proposition~\ref{prop: recourse_j} using the algorithms proposed in Section~\ref{sec: algorithms}, especially the SAA and PWL approaches in Sections~\ref{sec: alg_SAA} and~\ref{sec: alg_PWL}. Finally, we report the value of learning (VoL), discuss the main insights, and compare the performance of the proposed methods.

\subsection{Experimental Design}
\label{sec: design}

To conduct experiments, we adapt deterministic facility location benchmark instances 
introduced by \citet{holmberg1999exact}. These benchmark instances do not include several key parameters required by \eqref{eq: FL_DI_first_learning}, including demand uncertainty and learning-related parameters, as well as penalty cost for unmet demand. Therefore, we transformed benchmark instances ``p41", ``p51", and ``p56" into a form that is compatible with \eqref{eq: FL_DI_first_learning}. The details of this procedure are described below.

For each  $j \in \Cs{J}$, we set $\mu_j$, the mean of the demand distribution, as the (known) average demand of customer $j$. 
We also set $\sigma_j$, the initial demand uncertainty, as $\sigma_j=\sqrt{\alpha \mu_j}$. 
Moreover, we set $h_j(n_j;\sigma_j)=\sigma_j/\sqrt{1+\omega n_j}$, which satisfies Assumption~\ref{assum:properties} with $\omega >0$. 
Unless otherwise stated, we set the sampling cost $d_j=1$ for all $j \in \Cs{J}$.
For %The benchmark instances do not provide penalty costs for unmet demand. To make unmet demand more expensive than serving demand, we set the penalty cost for 
customer $j \in \Cs{J}$, we set \(
c_{0j} = \max_{i \in \mathcal{I}} c_{ij} +10\), where %$\delta > 0$ is a fixed margin and 
$c_{ij}$, $i \in \Cs{I}$ and $j \in \Cs{J}$, are transportation costs. For each facility $i \in \Cs{I}$ and customer $j \in \Cs{J}$, we set the installation cost $u_i =  \eta a_i^u \frac{1}{|\Cs{I}||\Cs{J}|}\sum_{i \in \Cs{I}}\sum_{j \in \Cs{J}} c_{ij} $ and capacity $D_{ij} = \rho a_i^D  \frac{1}{|\Cs{J}|}\sum_{j \in \Cs{J}}\mu_j$, where facility-specific parameters $a_i^u$ and $a_i^D$ were chosen such that facilities with larger capacities generally have higher installation costs.
% For each facility $i \in \Cs{I}$ and customer $j \in \Cs{J}$, we set installation cost $u_i=u$, where $u=\eta \frac{1}{|\Cs{I}||\Cs{J}|}\sum_{i \in \Cs{I}}\sum_{j \in \Cs{J}} c_{ij} $, and capacity $D_{ij}=D$, where $D=\rho \frac{1}{|\Cs{J}|}\sum_{j \in \Cs{J}}\mu_j$. 
% We chose the number of installed facilities to be $\left\lfloor |\Cs{I}|/2 \right\rfloor$. 
The parameters $\alpha,\omega, \rho$ and $\eta$ are all positive and tunable parameters, where $\alpha$ controls the initial uncertainty, $\sigma_j$, while $\omega$ controls the rate at which learning reduces uncertainty. Unless otherwise stated, $\alpha=3$ and $\omega=1$. 

All experiments are implemented in Python 3.13.5. Gurobi 13.0.1 is used as the MIP solver for location subproblems, while SciPy is used as a convex solver for one-dimensional smooth/nonsmooth learning subproblems. %, including univariate nonlinear minimization and root-finding. 
For benchmark comparison, we also use Knitro 15.0.0 as a commercial nonconvex solver. % to solve the joint formulation directly. 
All experiments are performed on a Windows system running on a PC equipped with an Intel Core i7-8550U CPU at 1.80 GHz, 4 cores, and 16 GB of RAM.

\subsection{Value of Learning}
\label{sec: VoL}
In this section, we examine the impact of learning on facility location decisions and objective values under various parameter settings. 
To this end, we first evaluate the {\it value of learning} (VoL) as the trade-off between the improvement in the objective value and the cost of collecting samples, as defined in \eqref{eq: VoL}. %We also investigate how the value of learning changes under different levels of demand uncertainty and learning efficiency. 
We then consider settings in which learning is available only to a subset of customers and compare the resulting value of learning with that in the full-learning case.
% All experiments in this section were conducted on the instance p41, where $|\Cs{I}|=10$ and $|\Cs{J}|=90$.
All experiments in this section were conducted on an adapted version of instance p41, with $|\Cs{I}|=10$, $|\Cs{J}|=20$, and $p=10$, i.e., the cardinality constraint is redundant. We also set $\rho=1$ and $\eta=34$.

\subsubsection{Full Learning}
In this section, we assume that learning is possible for all customers $j \in \Cs{J}$, a setting referred to as {\it full learning}. We investigate the VoL for  
%For all experiments in this section, we set the sampling cost to $d_j = 1$ for all customers and use $\delta = 10$ when defining the penalty cost. To study the effect of uncertainty and learning efficiency, we consider 
$\alpha \in \{1,\ldots,5\}$, 
 $\omega \in \{0.4,1,5\}$, and $d_j=d \in [0,2]$ for $j \in \mathcal{J}$, with increments of $0.1$. 
% \textcolor{red}{Why does Figure 1.a show $\alpha$ up to 5, while here it says $\le 3$?}
VoL is defined as %Recall that $\alpha$ controls the initial demand uncertainty through $\sigma_{j}^2 = \alpha \mu_j$, while $\omega$ controls the rate at which learning reduces uncertainty.
%For each parameter combination, we solve the model with and without learning and compute the VoL, defined as
% \begin{equation}
% \label{eq: VoL}
% \mathrm{VoL}
% =
%  \frac{\textrm{obj}(1) - \textrm{obj}(3)}{
% \textrm{obj}(1)
% }
% \times 100\%.    
% \end{equation}
% Here, $\textrm{obj}(1)$ denotes the optimal objective value for the model without learning, i.e., \eqref{eq: FL_DI_first}, where $\xi_j \sim \mathrm{N}(\mu_j,\sigma_j^2)$ for all $j \in \mathcal{J}$. On the other hand, $\textrm{obj}(3)$ denotes the optimal objective value for the model with learning, i.e., \eqref{eq: FL_DI_first_learning}. 
\begin{equation}
\label{eq: VoL}
\mathrm{VoL}
=
\frac{
F\big(x^\ast(0),0\big)-F\big(x^\ast(n^\ast),n^\ast\big)
}{
F\big(x^\ast(0),0\big)
}
\times 100\%.    
\end{equation}
Here, $x^\ast(0)$ denotes the optimal facility location decisions for the model without learning, i.e., \eqref{eq: FL_DI_first}, where $\xi_j \sim \mathrm{N}(\mu_j,\sigma_j^2)$ for all $j \in \mathcal{J}$. Similarly, $x^\ast(n^\ast)$ denotes the optimal facility location decisions for the model with learning, i.e., \eqref{eq: FL_DI_first_learning}, corresponding to the optimal solution $(x^\ast,n^\ast)$. Accordingly, $F\big(x^\ast(0),0\big)$ and $F\big(x^\ast(n^\ast),n^\ast\big)$ represent the optimal objective values without and with learning, respectively, where $F$ is defined in \eqref{eq: abstract_model}.
As defined, VoL quantifies the relative change in the total cost induced by changes in location decisions and by the reduction in uncertainty due to sampling.

Figure~\ref{fig:vol_sensitivity} reports the VoL.  %for the instances adapted from ``p41" in \citet{holmberg1999exact}. 
%shows the effect of the initial uncertainty level $\alpha$ under different values of the learning efficiency parameter $\omega$.
%The results show that 
Figure~\ref{fig:vol_alpha} shows that, for a fixed value of $\omega$, the VoL increases with $\alpha$. This is expected because greater initial demand uncertainty, given by $\sigma_j=\sqrt{\alpha\mu_j}$, increases the potential benefit of reducing uncertainty through learning. Moreover, for a fixed value of $\alpha$, the VoL also increases with $\omega$. This is because a larger $\omega$ implies that the same sampling effort yields a greater reduction in uncertainty, making learning more effective.
%In this experiment, the sampling costs are set equal across customers, i.e., $d_j=d$ for all $j$, so $d$ denotes a vector of identical customer-level sampling costs.
Figure~\ref{fig:vol_d} shows that the VoL decreases as $d$ increases. This is because a higher sampling cost reduces the net benefit of acquiring demand information through learning.

% For a fixed level of uncertainty, the value of learning increases with $\omega$. Since $\omega$ controls the rate at which uncertainty is reduced, larger values of $\omega$ make each observation more informative and therefore increase the benefit of learning.

% \begin{longtable}{cccc@{\hspace{1.2em}}cc@{\hspace{1.2em}}cc}

% \caption{Optimal objective values and value of learning for different uncertainty levels.}
% \label{tab:vol_results} \\

% \toprule
% Instance & $(I,J)$ & $\rho$ & $\eta$ 
% & $\alpha$ & $\omega$ 
% & Optimal Objective & VoL (\%) \\
% \midrule
% \endfirsthead

% \caption[]{(continued)} \\
% \toprule
% Instance & $(I,J)$ & $\rho$ & $\eta$ 
% & $\alpha$ & $\omega$ 
% & Optimal Objective & VoL (\%) \\
% \midrule
% \endhead

% \midrule
% \multicolumn{8}{r}{(continued on next page)} \\
% \endfoot

% \bottomrule
% \endlastfoot

% % ---------- p41 ----------
% \multirow{9}{*}{p41} 
% & \multirow{9}{*}{(10,90)} 
% & \multirow{9}{*}{0.5} 
% & \multirow{9}{*}{3}
% & 1 & 0.1 & 82183.22 & 1.21 \\
% & & & & 1 & 1   & 81324.85 & 2.24 \\
% & & & & 1 & 5   & 81146.71 & 2.45 \\
% & & & & 2 & 0.1 & 82720.17 & 3.02 \\
% & & & & 2 & 1   & 81435.91 & 4.52 \\
% & & & & 2 & 5   & 81171.93 & 4.83 \\
% & & & & 3 & 0.1 & 83102.07 & 4.86 \\
% & & & & 3 & 1   & 81514.86 & 6.67 \\
% & & & & 3 & 5   & 81193.88 & 7.04 \\

% \end{longtable}

\begin{figure}[!tb]
    \centering

    \subfloat[VoL vs. uncertainty parameters \(\alpha\) and \(\omega\), with $d=1$.]{
        \includegraphics[width=0.48\textwidth]{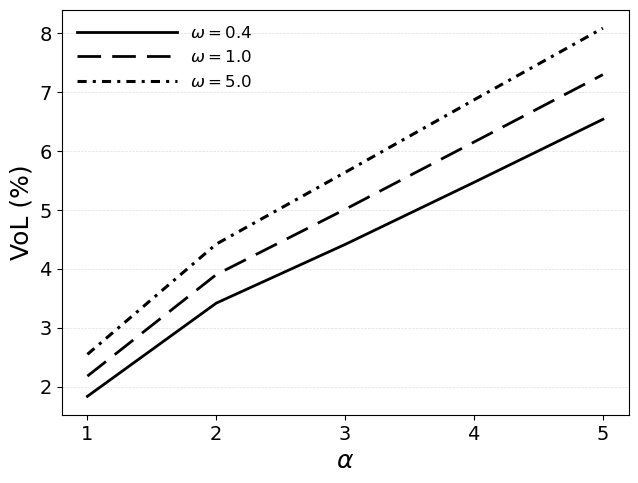}
        \label{fig:vol_alpha}
    }
    \hfill
    \subfloat[VoL vs. sampling cost \(d\), with $\alpha=3$ and $\omega=5$.]{
        \includegraphics[width=0.48\textwidth]{ 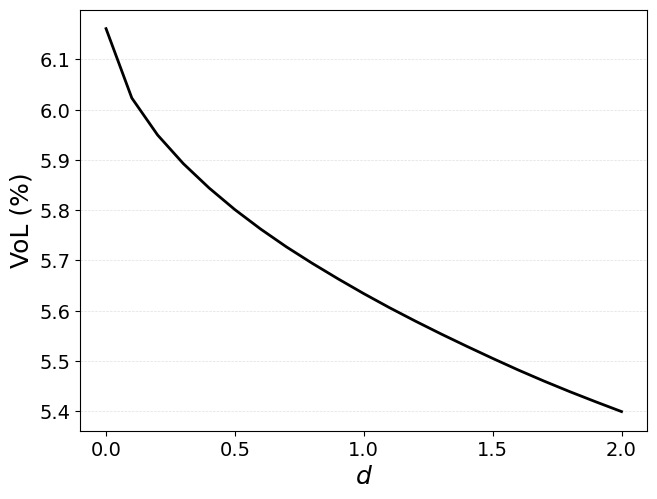}
        \label{fig:vol_d}
    }

    \caption{Sensitivity of the value of learning to uncertainty parameters and sampling cost.}
    \label{fig:vol_sensitivity}
\end{figure}

\subsubsection{Partial Learning}
\label{sec:partial_learning}
In practice, limitations in time or data availability may restrict demand learning to only a subset of customers. We refer to this setting as {\it partial learning}. Let $1 \le L \le |\Cs{J}|$ denote the number of customers whose demands can be learned, while the remaining customers keep their initial demand uncertainty $\sigma_j$. To account for the effect of customer selection, we consider two approaches for each value of $L$: (i) selecting ten random learning sets and (ii) selecting the $L$ customers with the lowest mean demand $\mu_j$, referred to as a {\it demand-ordered} learning set. %For both approaches, we compute the VoL using \eqref{eq: VoL}. %For the random learning sets, we also compute a {\it location-based} VoL in each of the ten replications for every value of $L$ by comparing $F\big(x^\ast(0),n^\ast\big)$ with $F\big(x^\ast(n^*),n^\ast\big)$, and report the mean across the replications. Since both objective values are evaluated under the same learned distribution $n^\ast$, their relative difference captures the cost improvement due to the change in the location decision as a result of the suboptimality of $\big(x^\ast(0),n^\ast\big)$.

Figure~\ref{fig:partial_learning_boxplot} reports the VoL for $L \in \{1,5,10,15,20\}$. The boxplots summarize the VoL over the ten random learning sets and the solid line represents the VoL under the demand-ordered approach. %, and the dashed line shows the mean location-based VoL. 
Overall, the VoL increases with the number of learned customers. For the random learning sets, this increase is approximately linear. This behavior is consistent with the separable structure of the expected recourse cost and the assumption that customer demands are independent. Under these assumptions, each additional learned customer contributes incrementally, causing the total VoL to grow approximately in proportion to $L$. In contrast, the demand-ordered curve is convex because larger values of $L$ add customers with higher demand means and, since $\sigma_j=\sqrt{\alpha\mu_j}$, greater initial uncertainty. These customers generally provide larger cost reductions from learning. However, their contribution also depends on transportation and penalty costs, so demand alone does not determine the VoL.

\begin{figure}[!tb]
\centering
\includegraphics[width=0.5\textwidth]{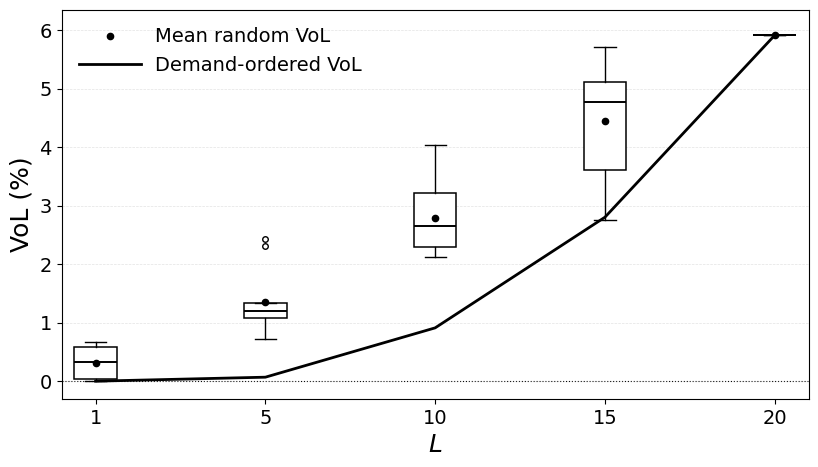}
\caption{Value of learning as a function of the number of learned customers, $L$. Each boxplot shows the VoL over ten random learning sets. The solid line represents the VoL under demand-ordered learning.}
\label{fig:partial_learning_boxplot}
\end{figure}

% \begin{table}[h]
% \centering
% \small
% \begin{tabular}{c c c c | c c c c}
% \toprule
% Instance & $(m,n)$ & $\rho$ & $\eta$ & $\alpha$ & $\omega$ & $L$ & VoL (\%) \\
% \midrule

% \multirow{10}{*}{p41} & \multirow{10}{*}{(10,90)} & \multirow{10}{*}{0.5} & \multirow{10}{*}{3}
% & \multirow{10}{*}{3} & \multirow{10}{*}{2}
% & 1  & $0.08 \pm 0.05$ \\
% & & & & & & 10 & $0.81 \pm 0.20$ \\
% & & & & & & 20 & $1.61 \pm 0.27$ \\
% & & & & & & 30 & $2.33 \pm 0.07$ \\
% & & & & & & 40 & $3.24 \pm 0.16$ \\
% & & & & & & 50 & $3.94 \pm 0.19$ \\
% & & & & & & 60 & $5.08 \pm 0.22$ \\
% & & & & & & 70 & $5.63 \pm 0.28$ \\
% & & & & & & 80 & $6.43 \pm 0.09$ \\
% & & & & & & 90 & 7.17 \\

% \bottomrule
% \end{tabular}
% \caption{Value of learning (VoL) under partial learning for instance p41. Each value is reported as mean $\pm$ standard deviation over five random selections of customers.}
% \label{tab:partial_learning}
% \end{table}

\subsection{Comparative Statics of Learning Decisions and Objective Value}
\label{sec: cs}

In this section, we illustrate the theoretical results established in Section~\ref{sec: comparativestatics} by examining the sensitivity of learning decision and objective value to key learning parameters, $d_j$, $\sigma_j=\sqrt{\alpha \mu_j}$, and $\omega$, for a single customer.
All experiments in this section were conducted on instance p41, with $|\Cs{I}|=10$, $|\Cs{J}|=90$, and $p=\left\lfloor |\Cs{I}|/2 \right\rfloor$. We also set $\rho=0.5$, $a_i^D=1$, and $u_i=0$ for all $i\in\Cs{I}$.  

%For the experiments in Sections~6.3 and~6.4, we set $a_i^D=1$ and $u_i=0$ for all $i\in\Cs{I}$, and impose the cardinality constraint $\sum_{i\in\Cs{I}}x_i=\left\lfloor |\Cs{I}|/2 \right\rfloor$.
 %particularly Theorems~\ref{thm:cs_min} and~\ref{thm:cs_value}. Since the learning subproblem is separable across customers, we examine the behavior of a single customer.

Figure~\ref{fig:comparative_statics} summarizes the comparative statics results on the optimal learning decision \(n_j^\ast\) and the corresponding optimal objective value \(V_j^\ast\). Figures~\ref{fig:cs_n_d} and~\ref{fig:cs_V_d} depict the sensitivity of results as the sampling cost \(d_j\) varies. Consistent with Theorem~\ref{thm:cs_min}, the optimal learning effort decreases as sampling becomes more expensive. The objective value increases with \(d_j\), but at a decreasing rate. This concave pattern reflects the trade-off between the marginal cost and marginal benefit of learning: as \(d_j\) increases, \(n_j^\ast\) decreases, which limits the additional cost imposed by higher sampling prices. Once \(d_j\) becomes sufficiently large, learning is no longer cost-effective, \(n_j^\ast=0\), and the objective value becomes flat because the solution coincides with that of the no-learning case. Figures~\ref{fig:cs_n_sigma} and~\ref{fig:cs_V_sigma} show the corresponding results as the initial demand uncertainty \(\sigma_{j}\) varies. Consistent with Theorems~\ref{thm:cs_min} and \ref{thm:cs_value}, both the optimal learning effort and objective value increase, respectively, as \(\sigma_{j}\) gets larger. %increases with \, which is also consistent with Theorem~\ref{thm:cs_min}. The optimal objective value increases as well, because higher initial uncertainty increases the expected cost associated with uncertain demand. 
Figures~\ref{fig:cs_n_omega} and~\ref{fig:cs_V_omega} show the sensitivity of \(n_j^\ast\) and \(V_j^\ast\) to the learning rate \(\omega\) (see Section~\ref{sec: design}). %, which is introduced in this section. 
%The theoretical argument is analogous to the proof in Theorem~\ref{thm:cs_min} and is omitted for brevity. 
Observe that both \(n_j^\ast\) and \(V_j^\ast\) decrease as \(\omega\) increases. A larger value of \(\omega\) reduces demand uncertainty, i.e., $\frac{\partial h_j}{\partial \omega}<0$, while making each additional unit of learning effort more effective at reducing uncertainty, i.e., $\frac{\partial^2 h_j}{\partial n_j \partial \omega}>0$. Consequently, less sampling effort is needed, leading to a lower optimal objective value. Furthermore, the effect of increasing $\omega$ on demand uncertainty is opposite to that of increasing $\sigma_j$ (see Assumption~\ref{assum:properties}), and accordingly, the comparative statics of the optimal learning decision and objective value are also reversed.

\begin{figure}[!tb]
    \centering

    % ---------- Row 1: optimal learning decision ----------
    \subfloat[$n_j^\ast$ vs. sampling cost $d_j$.]{
        \includegraphics[width=0.315\textwidth]{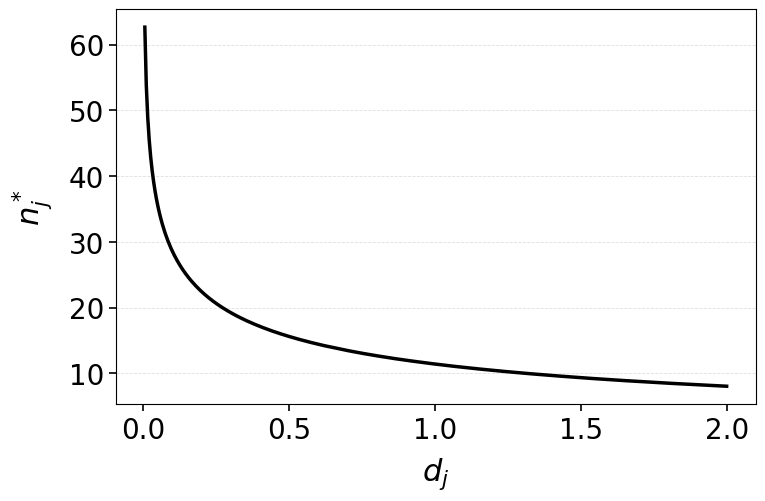}
        \label{fig:cs_n_d}
    }
    \hfill
    \subfloat[$n_j^\ast$ vs. initial demand uncertainty $\sigma_{j}$.]{
        \includegraphics[width=0.315\textwidth]{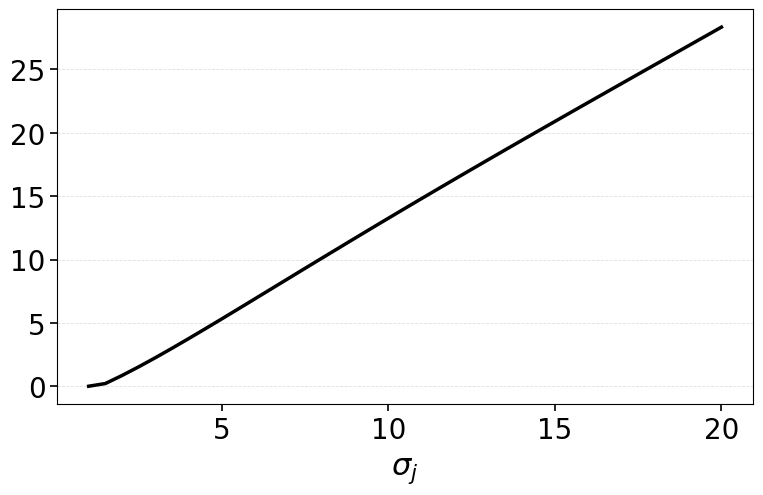}
        \label{fig:cs_n_sigma}
    }
    \hfill
    \subfloat[$n_j^\ast$ vs. learning rate $\omega$.]{
        \includegraphics[width=0.315\textwidth]{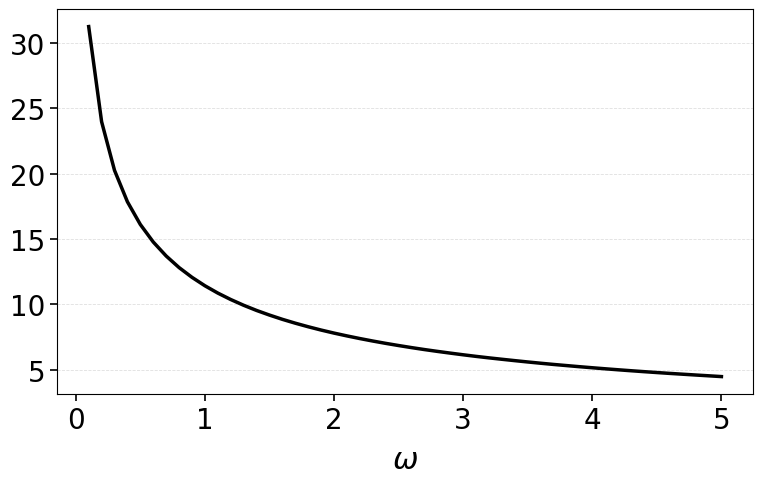}
        \label{fig:cs_n_omega}
    }

%    \vspace{1.0em}

    % ---------- Row 2: optimal objective value ----------
    \subfloat[$V_j^\ast$ vs. sampling cost $d_j$.]{
        \includegraphics[width=0.315\textwidth]{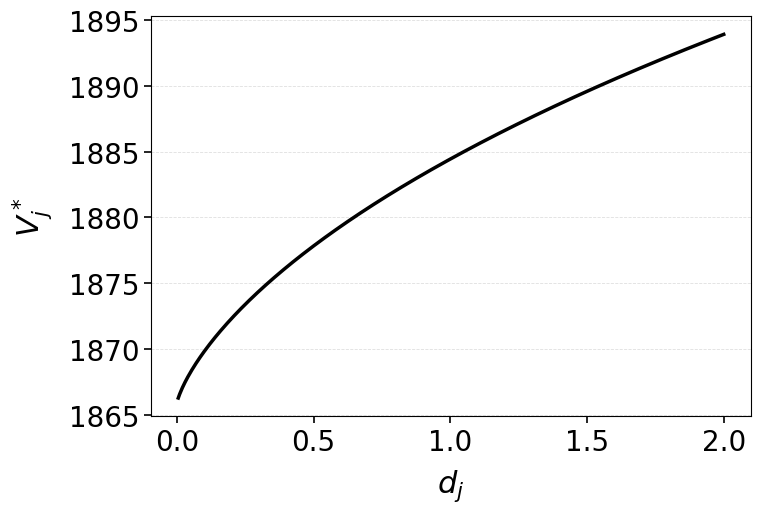}
        \label{fig:cs_V_d}
    }
    \hfill
    \subfloat[$V_j^\ast$ vs. initial demand uncertainty $\sigma_{j}$.]{
        \includegraphics[width=0.315\textwidth]{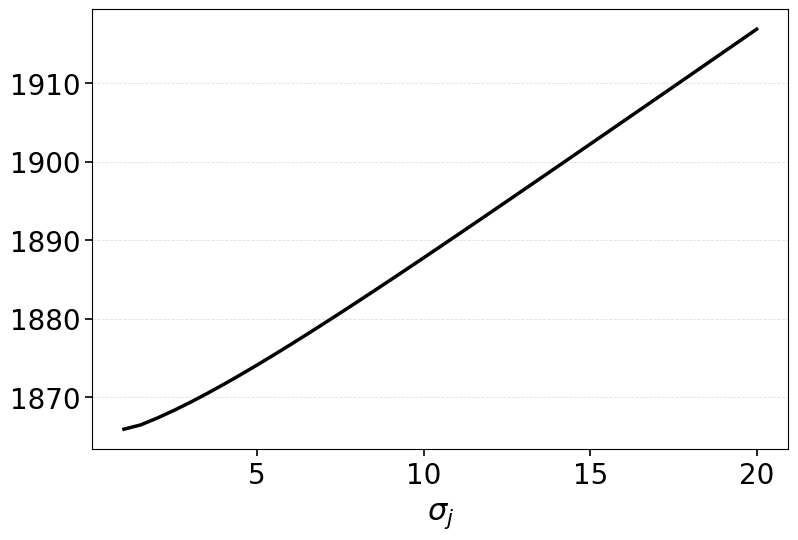}
        \label{fig:cs_V_sigma}
    }
    \hfill
    \subfloat[$V_j^\ast$ vs. learning rate $\omega$.]{
        \includegraphics[width=0.315\textwidth]{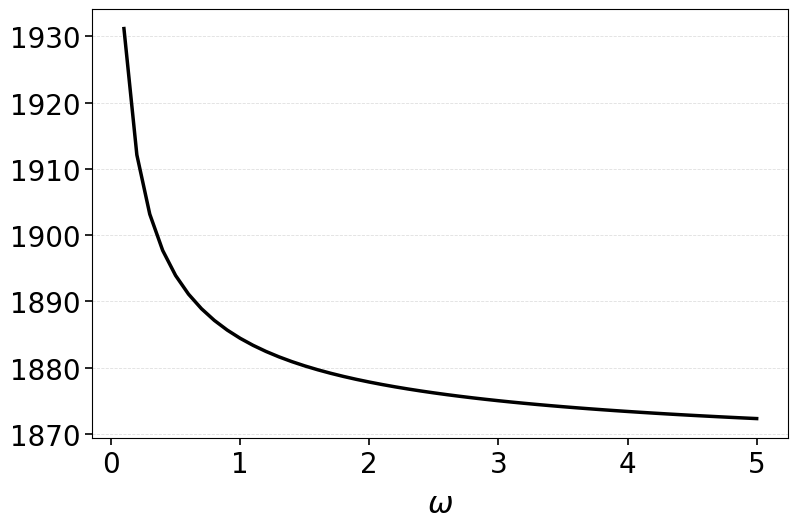}
        \label{fig:cs_V_omega}
    }

%    \vspace{1.0em}
    
    \caption{Comparative statics of the learning decisions and objective value for a fixed customer.}
    \label{fig:comparative_statics}
\end{figure}

% \begin{figure}[htbp]
%     \centering

%     % ---------- Row 1: sampling cost ----------
%     \subfloat[$n_j^\ast$ vs. sampling cost $d_j$.]{
%         \includegraphics[width=0.485\textwidth]{ n vs d.png}
%         \label{fig:cs_n_d}
%     }
%     \hfill
%     \subfloat[$V_j^\ast$ vs. sampling cost $d_j$.]{
%         \includegraphics[width=0.485\textwidth]{ V vs d.png}
%         \label{fig:cs_V_d}
%     }

%     \vspace{1.5em}

%     % ---------- Row 2: initial uncertainty ----------
%     \subfloat[$n_j^\ast$ vs. initial uncertainty $\sigma_{j}$.]{
%         \includegraphics[width=0.485\textwidth]{ n vs s0.png}
%         \label{fig:cs_n_sigma}
%     }
%     \hfill
%     \subfloat[$V_j^\ast$ vs. initial uncertainty $\sigma_{j}$.]{
%         \includegraphics[width=0.485\textwidth]{ V vs s0.png}
%         \label{fig:cs_V_sigma}
%     }

%     \vspace{1.5em}

%     % ---------- Row 3: learning efficiency ----------
%     \subfloat[$n_j^\ast$ vs. learning efficiency $\omega$.]{
%         \includegraphics[width=0.485\textwidth]{ n vs w.png}
%         \label{fig:cs_n_omega}
%     }
%     \hfill
%     \subfloat[$V_j^\ast$ vs. learning efficiency $\omega$.]{
%         \includegraphics[width=0.485\textwidth]{ V vs w.png}
%         \label{fig:cs_V_omega}
%     }

%     \caption{Comparative statics of the optimal learning decision and the optimal objective value for a fixed customer.}
%     \label{fig:comparative_statics}
% \end{figure}

\subsection{Computational Performance of the Proposed Algorithms}
\label{sec: comp}

Now, we present a comparative computational study of several solution approaches to solve \eqref{eq: abstract_model}. These include the proximal block coordinate descent solution approaches proposed in Sections~\ref{sec: alg_SAA} and \ref{sec: alg_PWL}, denoted as \texttt{SAA} and \texttt{PWLA}, respectively. 
As a benchmark, we implement an \texttt{Enumeration} approach that exhaustively evaluates all feasible binary facility location decisions \(x\); there are \(\binom{|\Cs{I}|}{p}\) feasible solutions. For each fixed solution \(x\), the resulting problem in the learning decisions \(n\) decomposes into $|\Cs{J}|$ univariate convex optimization problems, each of which can be solved independently. % using first-order derivative information. %Since this approach searches over all feasible facility location decisions, it provides a natural benchmark whenever enumeration is computationally tractable.
For comparison, we use Knitro, an off-the-shelf nonconvex solver, to solve \eqref{eq: abstract_model} directly over the joint decisions $(x,n)$, which is denoted by \texttt{Knitro}. We compare the computational performance of all four approaches in terms of computational time and solution quality.

\subsubsection{Experimental Setup and Implementation Details}

We present the computational results for instances adapted from ``p41", ``p51", and ``p56" \citep{holmberg1999exact}, which differ in the number of facilities and customers. For \texttt{SAA} and \texttt{PWLA}, we varied the approximation parameter $M$, corresponding to the number of samples and breakpoints, respectively, with $M \in \{50,100,200\}$ for \texttt{SAA} and $M \in \{5,10,20\}$ for \texttt{PWLA}. 
We set $\rho=0.5$ and $\eta=3$ for instances p41 and p51, and $\rho=0.7$ and $\eta=4$ for instance p56. Moreover, $a_i^D=1$ and $u_i=0$ for all $i\in\Cs{I}$, whereas   
$p=\left\lfloor |\Cs{I}|/2 \right\rfloor$.
For each experiment, we set a time limit of 21600 seconds. 

%To provide a structured comparison, we vary the parameters that determine the
%problem size and the approximation quality. These include the number of
%facilities and customers, the number of breakpoints used in the PWL
%approximation, and the number of samples used in the SAA. The
%remaining model parameters are kept fixed in this set of experiments so that the
%comparison focuses on the performance of the solution approaches rather than on
%changes in the underlying instance.

For \texttt{SAA}, the samples are generated using Latin Hypercube Sampling (LHS). Compared with standard Monte Carlo sampling, LHS provides more uniform coverage of the underlying sample space, and in our preliminary experiments, resulted in lower sampling error and shorter computational times. To account for sampling variability, each  \texttt{SAA} experiment is repeated 
over 10 independently generated sample sets. %, and the reported results are aggregated across these runs.
Furthermore, we implement \texttt{SAA} without a proximal term in the $n$-step. Our preliminary experiments indicate that incorporating the proximal term slows convergence while yielding the same local minimizer. We emphasize that the proximal term is included only to guarantee convergence by ensuring a sufficient decrease in the objective value; see Theorem~\ref{thm:min_convergence_SAA}. In practice, however, omitting the proximal term resulted in faster convergence without affecting the final solution in our experiments.
%Our preliminary experiments revealed that this strategy improves the approximation accuracy and typically requires fewer breakpoints than a uniform discretization of the entire interval.

For \texttt{PWLA}, we employ nonuniform breakpoints to improve approximation accuracy,  allocating approximately 80\% of the breakpoints to the interval \([-3,3]\), where the standard normal cdf and pdf exhibit the greatest variation. 
%a larger portion of them to the region where the standard normal cdf and pdf vary most rapidly. Specifically, approximately 80\% of the breakpoints were placed within the interval $[-3,3]$, while 
The remaining breakpoints are distributed over the tails. 
The breakpoints are generated globally and shared across all pairs $(j,k) \in  \Cs{J} \times \Cs{I}$ to construct the approximation function $S_M$, defined in \eqref{eq: pwl}. To determine the global breakpoint range, we first estimate the range of standardized terms $\frac{g_{kj(x)}}{h_j(n_j;\sigma_j)}=\frac{\hat{\xi}_{\pi(k)j}(x) - \mu_j}{h_j(n_j;\sigma_j)}$, over all feasible facility location decisions $x$ and learning decisions $n$. This is accomplished by computing the range of $\hat{\xi}_{\pi(k)j}(x) - \mu_j$ over facility location decisions $x$ and combining it with the range of $h_j(n_j;\sigma_j)$. Once this global interval is determined, the nonuniform breakpoints are constructed over the entire range. During optimization, each pair $(k,j)$ is used only the subset of breakpoints within its feasible interval, while breakpoints corresponding to points outside the interval are fixed to zero.

Although Knitro can approximate first-order derivatives using finite differences, this approach requires repeated objective function evaluations and exhibit poor computational performance in our preliminary experiments. 
%finite difference evaluations require repeated objective evaluations and lead to slow performance in our preliminary experiments. 
Therefore, for \texttt{Knitro}, we supply the analytical first-order derivatives with respect to both \(x\) and \(n\) to improve the solver performance. 

\subsubsection{Computational Results}
%Consistent with the notation in Section~\ref{sec: algorithms}, we use \(M\) to denote the approximation size for the approximation-based methods. In the \texttt{PWLA} approach, \(M\) denotes the number of breakpoints, while in the \texttt{SAA} approach, \(M\) denotes the number of samples.
For each solution approach $\texttt{alg} \in \{\texttt{enumeration}, \linebreak  \texttt{Knitro}, \texttt{PWLA}, 
\texttt{SAA}\}$, we evaluate the resulting solution $(x^\ast,n^\ast)$ using
the true objective function, $F$, defined in \eqref{eq: abstract_model}. We denote the corresponding objective value by $\textrm{obj}(\texttt{alg})$. %Here, the true objective function refers to the objective function in \eqref{eq: FL_DI_first_learning}, where the expectation term is evaluated using the closed-form expression in Proposition~\ref{prop: recourse_j}. 
We then compute a relative {\it gap} with respect to a 
{\it reference} objective function value, $\textrm{obj}^0$, as 
\begin{equation*}
    \mathrm{Gap}(\texttt{alg}): =
    \frac{\textrm{obj}(\texttt{alg}) - \textrm{obj}^0}
    {\textrm{obj}^0} \times 100  \%. 
\end{equation*}
%This evaluated value is then used to compute
%the reported gap. Thus, the gap is based on true objective evaluations rather than the approximate objective values obtained from the \texttt{PWLA} or \texttt{SAA} formulations. Let \(V^{\mathrm{method}}\) denote the true objective value obtained from a given method, and let \(V^{\mathrm{ref}}\) denote the reference objective value. The percentage gap is computed as
The reference objective value is obtained from \texttt{Enumeration} whenever it is computationally tractable; otherwise, we use the solution from \texttt{PWLA} 
with a sufficiently large number of breakpoints.
This choice is supported by the results for instance p41, where \texttt{PWLA} produced the same solution as \texttt{Enumeration}.
%This choice is supported by the ``p41", where the resulting solution from \texttt{PWLA} matches that of \texttt{enumeration}.

%For each SAA replication, we first solve the SAA formulation and then evaluate
%the resulting solution using the true objective function. The reported runtime
%and gap for SAA are given 
Table~\ref{tab:pwl_saa_comparison} summarizes the computational times and relative gaps of
the four approaches, For \texttt{SAA}, the reported numbers are  
as the mean \(\pm\) standard deviation over 10
replications.
The results demonstrate that \texttt{PWLA} provides the best balance between solution quality and computational efficiency. For instance p41, \texttt{Enumeration} is computationally tractable within the time limit and serves as an exact benchmark. Both \texttt{Knitro} and \texttt{PWLA} achieve zero gap, whereas \texttt{PWLA} requires substantially less computational time. For instance p51, \texttt{Knitro} also achieves a zero gap but requires more than 1600 seconds, whereas \texttt{PWLA} reaches the same solution quality in less than 30 seconds. For instance p56, \texttt{Knitro} does not close the gap within the six-hour time limit, while \texttt{PWLA} continues to obtain zero-gap solutions in short computational times. \texttt{SAA} also produces high-quality solutions across all instances;  however, its computational time increases substantially as the sample size, $M$, increases. Overall, these results indicate that \texttt{PWLA} scales more effectively than the alternative approaches while maintaining high solution quality.

\begin{table}[!htb]
\small
\centering
\caption{Comparison of \texttt{enumeration}, \texttt{Knitro}, \texttt{PWLA}, and \texttt{SAA}  across different instances.}
\label{tab:pwl_saa_comparison}
 \begin{threeparttable}
\begin{tabular}{c c c c c c}
\toprule
Instance & $(|\Cs{I}|, |\Cs{J}|)$ & Method & $M$ & Runtime (s) & Gap (\%) \\
\midrule

\multirow{8}{*}{p41}
& \multirow{8}{*}{$(10,90)$}
& \texttt{Enumeration} & -- & 509.09 & 0 \\ 
\cmidrule(lr){3-6}

&  & \texttt{Knitro} & -- & 402.19 & 0 \\
\cmidrule(lr){3-6}

&  & \multirow{3}{*}{\texttt{PWLA}}
& 5  & 3.27 & 0 \\
&  &  & 10 & 4.35 & 0 \\
&  &  & 20 & 15.28 & 0 \\
\cmidrule(lr){3-6}

&  & \multirow{3}{*}{\texttt{SAA}}
& 50  & $18.91 \pm 1.23$  & $0.009 \pm 0.011$ \\
&  &  & 100 & $71.47 \pm 7.42$  & $0.002 \pm 0.003$ \\
&  &  & 200 & $210.95 \pm 7.06$ & 0 \\

\midrule

\multirow{7}{*}{p51}
& \multirow{7}{*}{$(20,100)$}
& \texttt{Knitro} & -- & 1606.02 & 0 \\
\cmidrule(lr){3-6}

&  & \multirow{3}{*}{\texttt{PWLA}}
& 5  & 16.05 & 0 \\
&  &  & 10 & 23.69 & 0 \\
&  &  & 20 & 27.88 & 0 \\
\cmidrule(lr){3-6}

&  & \multirow{3}{*}{\texttt{SAA}}
& 50  & $186.09 \pm 20.74$   & $0.008 \pm 0.006$ \\
&  &  & 100 & $587.14 \pm 101.34$  & $0.003 \pm 0.004$ \\
&  &  & 200 & $201.35 \pm 6.50$  & 0 \\

\midrule

\multirow{7}{*}{p56}
& \multirow{7}{*}{$(30,200)$}
& \texttt{Knitro} & -- & $>21600$ & $3.43^{\dagger}$ \\
\cmidrule(lr){3-6}

&  & \multirow{3}{*}{\texttt{PWLA}}
& 5  & 88.42 & 0 \\
&  &  & 10 & 59.84 & 0 \\
&  &  & 20 & 137.88 & 0 \\
\cmidrule(lr){3-6}

&  & \multirow{3}{*}{\texttt{SAA}}
& 50  & $607.32 \pm 54.80$ & $0.064 \pm 0.040$ \\
&  &  & 100 & $2295.20 \pm 180.13$  & $0.014 \pm 0.009$ \\
&  &  & 200 & $6058.21 \pm 262.41$ & $0.004 \pm 0.003$ \\

\bottomrule
%\multicolumn{6}{l}{\footnotesize $^{\dagger}$Knitro was allowed to run for 6 hours; the reported gap is based on the best feasible solution found within that time.}
\end{tabular}

\begin{tablenotes}
\footnotesize

\item[$^{\dagger}$] The reported gap is based on the best feasible solution found within the time-limit.
\end{tablenotes}

 \end{threeparttable}
\end{table}

\section{Conclusion and Future Research}
\label{sec: conclusion}
%%%%%%%%%%%%%%%%%%%%%%%%%%%%%%%%%%%%%%%%%%%%%%%%%

%SUMMARY

This paper studies a two-stage stochastic capacitated facility location problem in which the decision maker can invest in learning the demand before it is realized. In the proposed model, facility location and learning decisions are made jointly in the first stage, where learning reduces demand uncertainty. This setting captures a practical planning trade-off: information can improve future operating performance, but it must be acquired at a cost.
We derived a closed-form expression for the expected recourse cost under independent normal demands, which leads to a nonlinear/nonconvex mixed-integer program with separability in the continuous learning decisions. 
Building on this structure, we established local comparative statics showing how optimal learning effort and the corresponding local optimal value respond to changes in sampling costs and initial demand uncertainty. 
These results formalize the intuition that learning becomes less attractive as it becomes more expensive and more beneficial as demand uncertainty increases.
We also developed proximal block coordinate descent algorithms that alternate between updating facility location and learning decisions. To handle the nonlinearity of the expected recourse function, we propose sample average and piecewise-linear approximations.
% two approximation schemes were proposed: a sample average approximation and a piecewise-linear approximation.
Theoretical results establish the convergence of the solution algorithms and consistency of the set of stationary points and local minimizers. 
%CONCLUSION/INSIGHT
The numerical results show that learning can substantially reduce expected costs, especially when initial demand uncertainty is high, learning is effective in reducing uncertainty, and sampling costs are low. They also show that, although location and learning decisions are made jointly in the first stage, learning can change the optimal location decision by changing the demand distribution, providing an additional source of cost improvement beyond uncertainty reduction. Partial learning experiments further show that even when learning is available only for a subset of customers, it can still provide meaningful value. In the computational comparison, the piecewise-linear approximation provides the strongest balance between solution quality and runtime, and scales more effectively than alternative methods, including enumeration and a direct solve by a commercial nonconvex solver.

% FUTURE RESEARCH
Several extensions are natural. First, the current model assumes independent customer demands and learning decisions that reduce variances while leaving means unchanged under a normal demand model. Future work could consider correlated demand, learning mechanisms that update both means and variances, or other probabilistic models. Second, robust or distributionally robust variants could be developed for settings in which the demand model itself may be misspecified.

%\bibliographystyle{informs2014}
%\bibliography{ref}

\ECSwitch %%% COMMENTED OUT
%\ECDisclaimer
%%%%%%%%%%%%%%%%%%%%%%%%%%%%%%%%%%%%%%%%%%%%%%%%%%%%%%%%%%
%%% Main head for the e-companion
\ECHead{Electronic Companion}

\section{Closed-Form Expression of the Objective Function}
\label{secEC:recourse}

%In this section, %~\ref{secEC:closed-form}, we present and prove results that help derive a closed-form expression of the expected recourse function, as characterized in Proposition~\ref{prop: recourse_j}. 

%We then establish the structure of the resulting closed-form objective function, as defined in \eqref{eq: abstract_model}, in Section~\ref{secEC:closed-form-structure}.

%\subsection{Closed-Form Expression}
%\label{secEC:closed-form}

Recall that given a fixed \(x\) and \(\xi\), \(R(x, \xi)\), as defined in~\eqref{eq: FL_DI_second}, is decomposable across customers. 
Let \(R_j(x, \xi_j)\) denote the cost associated with customer $j \in \Cs{J}$, with $R(x, \xi) = \sum_{j \in \Cs{J}} R_j(x, \xi_j)$. Consequently, 
$\ep{\xi}{R(x, \xi)} = \sum_{j \in \Cs{J}} \ep{\xi_j}{R_j(x, \xi_j)}$.  
Hence, throughout this section, we derive closed-form expressions for a single customer $j \in \Cs{J}$. Suppressing the index $j$ for notational simplicity, we have 
\begin{equation}
    \label{eq: FL_DI_second_j}
        R(x, \xi) = \min_{y,s} \Big\{ \sum_{i \in \Cs{I}}  c_{i} y_{i} + c_{0} s : \sum_{i \in \Cs{I}} y_{i} + s = \xi, \ y_{i} \leq D_{i} x_i, \ i \in \Cs{I}, \ 
        y_{i} \geq 0, \ i \in \Cs{I},\;
        s \geq 0 \Big\},
\end{equation}
where $\Cs{I}$ is a relabeling of facilities such that $c_0 > c_1 \geq c_2 \geq \dots \geq c_{|\Cs{I}|} \geq 0$.

For a fixed $x$, we first obtain a closed-form expression for the recourse function $R(x,\xi)$, as defined in~\eqref{eq: FL_DI_second_j}, and then combine it with the distribution of $\xi$, to characterize $\ep{\xi}{R(x,\xi)}$. 
Lemma~\ref{lem: affine_cost_j} characterizes \(R(x, \cdot)\) as a maximum of a finite number of affine functions. 
\begin{lemma}
\label{lem: affine_cost_j}
    Consider a fixed $x$ and $\xi$. Then, we have 
    %\begin{equation*}
    %   \label{eq: affine}
    %    \begin{aligned}
            $R(x, \xi) = \max_{k=0}^{|\Cs{I}|} \ \{a_{k}(x) + c_{k} \xi\}$,
    %    \end{aligned}
    %\end{equation*}
    where $a_{|\Cs{I}|}(x)=0$ and 
    %\begin{equation}
    %\label{eq: intercept_affine}
    %    \begin{aligned}
            $a_{k}(x) := \sum_{i=k+1}^{|\Cs{I}|} D_{i} (c_{i} - c_{k}) x_{i}$, $k = 0, \ldots, |\Cs{I}|-1$. 
    %    \end{aligned}
    %\end{equation}
    
\end{lemma}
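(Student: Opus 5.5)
The plan is to solve the single-customer LP \eqref{eq: FL_DI_second_j} in closed form by a greedy argument and then read off the resulting piecewise-linear function of $\xi$. With $x$ fixed, the effective capacity of facility $i$ is $D_i x_i$. The unit costs satisfy $c_0 > c_1 \ge \cdots \ge c_{|\Cs{I}|}$, so the cheapest supply comes from the highest index. An optimal solution therefore fills facilities in the order $|\Cs{I}|, |\Cs{I}|-1, \ldots, 1$ and routes any remainder to the slack $s$ at cost $c_0$.

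\textbf{Greedy optimality.} We justify this by an exchange argument. If some unit is served at a more expensive source while a cheaper source has spare capacity, shifting that unit does not increase the cost. For $\xi\ge 0$ this gives the following: with cumulative capacities $\hat\xi_k := \sum_{i=k}^{|\Cs{I}|} D_i x_i$ and $\hat\xi_{|\Cs{I}|+1}:=0$, when $\xi\in[\hat\xi_{k+1},\hat\xi_k]$ the facilities $k+1,\ldots,|\Cs{I}|$ are saturated, facility $k$ serves the residual, and the rest are idle. The slack absorbs anything beyond $\hat\xi_1$. (For $\xi<0$ the LP is infeasible, since $y,s\ge 0$; I will treat $\xi\ge 0$ and note that the formula is used as the natural extension.)

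\textbf{Piecewise-linear form and convexity.} On the $k$-th piece the cost is
\[
\sum_{i=k+1}^{|\Cs{I}|} c_i D_i x_i + c_k\big(\xi - \sum_{i=k+1}^{|\Cs{I}|} D_i x_i\big) = a_k(x) + c_k \xi,
\]
with $a_k(x)$ exactly as defined; the $k=0$ piece is the slack. Hence $R(x,\cdot)$ is continuous and piecewise linear, with slopes $c_{|\Cs{I}|}\le\cdots\le c_0$ increasing as $\xi$ grows. It is therefore convex, and a continuous convex piecewise-linear function equals the maximum of its affine pieces. A cleaner alternative for this step is LP duality: the dual of \eqref{eq: FL_DI_second_j} is $\max\{\pi\xi - \sum_i D_i x_i w_i : w_i\ge 0,\ \pi - w_i\le c_i,\ \pi\le c_0\}$. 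For fixed $\pi$ the optimal choice is $w_i=(\pi-c_i)_+$, and the objective is piecewise linear in $\pi$ with breakpoints at the $c_k$, so the maximum over $\pi$ is attained at some $\pi=c_k$. Evaluating there gives $c_k\xi - \sum_{i>k} D_i x_i (c_k - c_i) = a_k(x)+c_k\xi$, using $c_i\le c_k$ for $i>k$. This yields the claimed formula directly via strong duality, since the primal is feasible for $\xi\ge0$ and the dual is feasible.

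\textbf{Main obstacle.} The delicate part is the bookkeeping in the dual route: checking that the piecewise-linear concave function of $\pi$ attains its maximum at a breakpoint $c_k$ (or at the bound $c_0$), and handling ties among the $c_k$ and facilities with $x_i=0$. Ties only duplicate affine pieces and do not change the maximum, and $x_i=0$ makes the corresponding terms vanish. I would present the duality argument as the main proof and use the greedy description only as intuition.
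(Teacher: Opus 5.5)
Your main argument is essentially the paper's proof. The paper writes the same dual (your $\pi$ and $w_i$ are its $\beta$ and $-\alpha_i$) and finds its optimum by enumerating extreme points in the cases $\beta=c_0$ and $\beta=c_k$. You reach the same candidates $\pi=c_k$ by setting $w_i=(\pi-c_i)_+$ and maximizing a concave piecewise-linear function of $\pi$, which also makes it immediate by weak duality that each $a_k(x)+c_k\xi$ is a lower bound. Your caveat that the identity needs $\xi\ge 0$ (primal feasibility, hence a bounded dual) is correct and is only implicit in the paper, which later applies the max-formula to negative $\xi$ as well under normal demand.
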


\proof{Proof.}

Let $\beta$ and $\alpha_i$, $i \in \Cs{I}$, be the dual variables corresponding to the constraints in \eqref{eq: FL_DI_second_j}. The dual problem can be written as 
    %\begin{equation*}
        %\label{eq: subdual}
        %\begin{aligned}
            $\max_{\alpha,\beta} \Big\{ \beta \xi + \sum_{i \in \Cs{I}} \alpha_i D_i x_i: \beta + \alpha_i \leq c_i, \; i \in \Cs{I}, \; 
            \beta \leq c_0, \; 
            \alpha_i \leq 0, \; i \in \Cs{I} \Big\}$,
        %\end{aligned}
    %\end{equation*}
    where extreme points of its feasible region can be identified through two cases:
    \begin{itemize}
        \item Case 1: \(\beta = c_0\). Then, for all \(i \in \Cs{I}\), we have either \(\alpha_i = 0\) or \(\alpha_i = c_i - c_0\). 
        Note that \(\alpha_i \leq c_i - c_0 < 0\) since \(c_0 > c_i\), making \(\alpha_i = 0\) redundant. 
        Hence, \(\beta = c_0\) and \(\alpha_i = c_i - c_0 \) for $i \in  \Cs{I}$ form an extreme point with objective value \(a_0(x) + c_0 \xi\).

        \item Case 2: \(\beta < c_0\). Then, for all \(i  \in \Cs{I}\), we have either \(\alpha_i = 0\) or \(\alpha_i = c_i - \beta\). 
        Moreover, there must exist some \(k\) such that \(\alpha_k = c_k - \beta = 0\) so that at least \(|\Cs{I}| + 1\) constraints are binding at an extreme point. 
        For this extreme point, we have \(\beta = c_k ,\) and for \(i \neq k,\) \(\alpha_i = \min(0, c_i - c_k ),\) 
        yielding the objective value \(a_k(x) + c_k \xi.\)
    \end{itemize}
    The maximum objective value corresponding to these extreme points yields $R(x,\xi)$.
\hfill \Halmos
\endproof

Proposition~\ref{prop: piecewise_cost_j} characterizes $R(x, \cdot)$ as a convex increasing piece-wise linear function. 
\begin{proposition} \label{prop: piecewise_cost_j}
    Consider a fixed $x$. Let $\hat{\xi}_{k}(x):=\sum_{i=k}^{|\Cs{I}|} D_i x_i$, $k=1,\ldots,|\Cs{I}|$, with the conventions $\hat{\xi}_{0}(x)=+\infty$ and $\hat{\xi}_{|\Cs{I}|+1}(x)=-\infty$. Then, $R(x,\xi)=a_k(x)+c_k\xi$ for $\hat{\xi}_{k+1}(x)\le \xi < \hat{\xi}_{k}(x)$, $k=0,\ldots,|\Cs{I}|$, 
    where $a_k(x)$ is defined in Lemma~\ref{lem: affine_cost_j}. Moreover, $R(x,\cdot)$ is a convex increasing piece-wise linear function in $\xi$. 
\end{proposition}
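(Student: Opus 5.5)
The plan is to use Lemma~\ref{lem: affine_cost_j}, which gives $R(x,\xi)=\max_{k=0}^{|\Cs{I}|}\{a_k(x)+c_k\xi\}$. We then identify which affine piece attains the maximum on each interval, either by comparing consecutive pieces or directly from the primal greedy solution. Convexity is immediate, since $R(x,\cdot)$ is a pointwise maximum of affine functions. It is increasing because every slope satisfies $c_k\ge 0$, and at least the slope $c_0>0$ is active for large $\xi$. Piecewise linearity also follows because the maximum is over finitely many affine functions.

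To locate the breakpoints, I would compute the difference of consecutive pieces:
\[
\bigl(a_{k-1}(x)+c_{k-1}\xi\bigr)-\bigl(a_k(x)+c_k\xi\bigr)=(c_{k-1}-c_k)\bigl(\xi-\hat\xi_k(x)\bigr),
\]
for $k=1,\ldots,|\Cs{I}|$. This uses the definition of $a_k$, which gives
\[
a_{k-1}(x)-a_k(x)=-(c_{k-1}-c_k)\sum_{i=k}^{|\Cs{I}|}D_ix_i .
\]
Hence piece $k-1$ dominates piece $k$ exactly when $\xi\ge\hat\xi_k(x)$. The thresholds are ordered, $\hat\xi_1(x)\ge\hat\xi_2(x)\ge\cdots\ge\hat\xi_{|\Cs{I}|}(x)\ge 0$, because $D_ix_i\ge0$. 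For $\xi\in[\hat\xi_{k+1}(x),\hat\xi_k(x))$ this gives a chain of inequalities: for $\ell\le k$, piece $\ell-1$ is at most piece $\ell$, and for $\ell>k$, piece $\ell-1$ is at least piece $\ell$. Telescoping both chains shows that piece $k$ is the maximum. The conventions $\hat\xi_0=+\infty$ and $\hat\xi_{|\Cs{I}|+1}=-\infty$ handle the end intervals.

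As a cross-check, and as an alternative argument, the primal greedy solution fills the cheapest facilities first, i.e.\ those with the largest indices. When $\xi\in[\hat\xi_{k+1},\hat\xi_k)$, facilities $k+1,\ldots,|\Cs{I}|$ are saturated and facility $k$ serves the remainder, or $s$ does when $k=0$. The resulting cost is $\sum_{i>k}c_iD_ix_i+c_k(\xi-\hat\xi_{k+1})$, which equals $a_k(x)+c_k\xi$.

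The main obstacle is the degenerate cases: ties $c_{k-1}=c_k$, and empty intervals when $x_k=0$ or $D_k=0$, where $\hat\xi_k=\hat\xi_{k+1}$. With ties, consecutive pieces coincide, so the identification still holds. Empty intervals make the claim vacuous, so both cases are harmless. Negative $\xi$, which has positive probability under the normal model, falls in the interval $k=|\Cs{I}|$ with $R=c_{|\Cs{I}|}\xi$. This matches the dual formula but should be noted, since the primal problem with $s\ge0$ is then infeasible. I would either remark that the affine formula is taken as the definition there, or restrict to $\xi\ge0$ for the primal interpretation.
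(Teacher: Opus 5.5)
Your proposal is correct and follows essentially the same route as the paper: both start from the max-of-affine representation in Lemma~\ref{lem: affine_cost_j}, show that consecutive pieces cross exactly at $\hat\xi_k(x)$, and read off convexity and monotonicity from the max structure and $c_k\ge 0$. Your factorization $(c_{k-1}-c_k)\bigl(\xi-\hat\xi_k(x)\bigr)$, combined with the two telescoping chains over the ordered thresholds, is a cleaner and fully rigorous version of the paper's informal ``repeating this process'' step. Your remark that the primal problem is infeasible for $\xi<0$, so that the affine formula there is really the dual extreme-point expression, correctly flags a point the paper leaves implicit.
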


\proof{Proof.}
For notational simplicity, let us suppress \(x\). By Lemma~\ref{lem: affine_cost_j}, we have \(R(\xi) = \max_{k=0}^{I} \bar{h}_k(\xi) \), where \(\bar{h}_k(\xi) = a_k + c_k \, \xi\),  $k=0, \dots, |\Cs{I}|$; hence, it is a convex piece-wise linear function.  
Fix \(k=0, \dots, |\Cs{I}|-2\). Functions \(\bar{h}_k(\xi)\) and \(\bar{h}_{k+1}(\xi)\) intersect at \(\hat{\xi}_{k+1} = \sum_{i = k+1}^{|\Cs{I}|} D_i x_i\), and we have \(\bar{h}_k(\hat{\xi}_{k+1}) = \bar{h}_{k+1}(\hat{\xi}_{k+1}) = \sum_{i=k+1}^{|\Cs{I}|} D_i c_i x_i\). Therefore, $\max \{\bar{h}_k(\xi), \bar{h}_{k+1}(\xi)\}$ is equal to $\bar{h}_{k+1}(\xi)$ if $\xi < \hat{\xi}_{k+1}$ and $\bar{h}_k(\xi)$, otherwise. 
    % \begin{equation}
    %     \label{eq: maxh(xi)1}
    %     \begin{aligned}
    %         \max \{\bar{h}_k(\xi), \bar{h}_{k+1}(\xi)\} =
    %         \begin{cases}
    %             \bar{h}_{k+1}(\xi), &  \quad \xi < \hat{\xi}_{k+1},\\
    %             \bar{h}_k(\xi), & \quad \xi \geq \hat{\xi}_{k+1}.
    %         \end{cases}
    %     \end{aligned}
    % \end{equation}
    Observe that \(a_{k+2} \leq \bar{h}_{k+1}(\hat{\xi}_{k+1})\) as 
    \[a_{k+2} - \bar{h}_{k+1}(\hat{\xi}_{k+1}) = \sum_{i=k+3}^{I} D_i(c_i-c_{k+2})x_i - \sum_{i=k+1}^{|\Cs{I}|} D_i c_i x_i = -c_{k+2} \sum_{i=k+3}^{|\Cs{I}|} D_i x_i - \sum_{i=k+1}^{k+2} D_i c_i x_i \leq 0.\] 
    Given this and the facts that \(c_0 > c_k \geq c_{k+1} \geq 0\) and \(a_k \leq a_{k+1} \leq 0\), we conclude that $\max \{\bar{h}_{k+1}(\xi), \bar{h}_{k+2}(\xi)\}$ is equal to $\bar{h}_{k+2}(\xi)$ if $\xi < \hat{\xi}_{k+2}$ and $\bar{h}_{k+1}(\xi)$, otherwise. 
    % \begin{equation}
    %     \label{eq: maxh(xi)2}
    %     \begin{aligned}
    %          \max \{\bar{h}_{k+1}(\xi), \bar{h}_{k+2}(\xi)\} =
    %          \begin{cases}
    %             \bar{h}_{k+2}(\xi), & \quad \xi < \hat{\xi}_{k+2},\\
    %             \bar{h}_{k+1}(\xi), & \quad \xi \geq \hat{\xi}_{k+2}.
    %         \end{cases}
    %     \end{aligned}
    % \end{equation}
    Observe that \(0 \leq \hat{\xi}_{k+2} \leq \hat{\xi}_{k+1}\) and \(0 \leq \bar{h}_{k+2}(\hat{\xi}_{k+2}) \leq \bar{h}_{k+1}(\hat{\xi}_{k+1})\). These two facts, combined with the previous two conclusions, imply that $\max \{\bar{h}_k(\xi), \bar{h}_{k+1}(\xi), \bar{h}_{k+2}(\xi)\}$ is equal to $\bar{h}_{k+2}(\xi)$ if $\xi < \hat{\xi}_{k+2}$, $\bar{h}_{k+1}(\xi)$ if $\hat{\xi}_{k+2} \leq \xi < \hat{\xi}_{k+1}$, and $\bar{h}_k(\xi)$, otherwise. 
    % \begin{equation*}
    %     \begin{aligned}
    %         \max \{\bar{h}_k(\xi), \bar{h}_{k+1}(\xi), \bar{h}_{k+2}(\xi)\} =
    %         \begin{cases}
    %             \bar{h}_{k+2}(\xi), & \quad \xi < \hat{\xi}_{k+2},\\
    %             \bar{h}_{k+1}(\xi), & \quad \hat{\xi}_{k+2} \leq \xi < \hat{\xi}_{k+1},\\
    %             \bar{h}_k(\xi), & \quad \xi \geq \hat{\xi}_{k+1}.
    %         \end{cases}
    %     \end{aligned}
    % \end{equation*}
    Repeating this process for \(k=0, \dots, |\Cs{I}|-2\) yields \(R(x, \xi)\).
    Also, as  \(c_0 > c_1 \geq \ldots \geq c_{|\Cs{I}|} \geq 0\), \(a_1 \leq a_{2} \leq \ldots \leq a_{|\Cs{I}|} \leq 0\), we conclude $R(x, \cdot)$ is increasing. 
\hfill \Halmos
\endproof

Before proving Proposition~\ref{prop: recourse_j}, we present a lemma related to a normal distribution, whose proof is immediate by noting that $\phi'(z) = -z \, \phi(z)$. 
\begin{lemma} \label{lem:2}
    Suppose that \(\zeta\) follows a standard normal distribution, i.e., \(\zeta \sim N(0, 1)\). Then, $\int_{a}^{b} \zeta \, \phi(\zeta) \, d\zeta = \phi(a) - \phi(b)$.
\end{lemma}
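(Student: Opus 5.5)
The claim is $\int_a^b \zeta\,\phi(\zeta)\,d\zeta=\phi(a)-\phi(b)$. The plan is to find an explicit antiderivative of the integrand $\zeta\,\phi(\zeta)$ and then apply the fundamental theorem of calculus.

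First, recall that $\phi(\zeta)=\frac{1}{\sqrt{2\pi}}e^{-\zeta^2/2}$. Differentiating with the chain rule gives
\begin{equation*}
\phi'(\zeta)=\frac{1}{\sqrt{2\pi}}e^{-\zeta^2/2}\cdot(-\zeta)=-\zeta\,\phi(\zeta).
\end{equation*}
So $-\phi$ is an antiderivative of $\zeta\,\phi(\zeta)$ on all of $\Bs{R}$.

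Second, since $\phi$ is continuously differentiable, the fundamental theorem of calculus applies for any finite $a\le b$:
\begin{equation*}
\int_a^b \zeta\,\phi(\zeta)\,d\zeta=\big[-\phi(\zeta)\big]_a^b=\phi(a)-\phi(b).
\end{equation*}

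Third, the lemma will later be applied with the infinite endpoints $\hat\xi_0=+\infty$ and $\hat\xi_{|\Cs{I}|+1}=-\infty$, so I would extend the identity to those cases. Let an endpoint tend to $\pm\infty$. The integrand satisfies $|\zeta|\phi(\zeta)$ integrable on $\Bs{R}$, so the improper integral converges. Also $\phi(\pm\infty)=0$, so the same formula holds with the convention $\phi(\pm\infty)=0$.

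There is no real obstacle here; the only point needing care is this improper-integral case at infinite endpoints.
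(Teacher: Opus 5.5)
Your proposal is correct and follows the paper's argument, which is just the observation that $\phi'(z)=-z\,\phi(z)$, so $-\phi$ is an antiderivative of $\zeta\,\phi(\zeta)$. Your remark on infinite endpoints, using integrability of $|\zeta|\phi(\zeta)$ and $\phi(\pm\infty)=0$, is a useful addition: the paper applies the lemma with $\hat{\xi}_0=+\infty$ and $\hat{\xi}_{|\Cs{I}|+1}=-\infty$ without comment.
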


%\proof{Proof.}
%This is immediate by noting that $\phi'(z) = -z \, \phi(z)$. 
%\hfill \Halmos 
%\endproof

% \begin{lemma} \label{lem:4}
%     Suppose that \(\zeta \sim N(\mu, \sigma^2)\) with pdf \(f(\cdot)\). Then,
%     \begin{equation*}
%         \begin{aligned}
%             \int_{\bar{a}}^{\bar{b}} (a + c\,\zeta) f(\zeta)  d\zeta
%             = (a + c\,\mu)  \Phi\left(\frac{\bar{b} - \mu}{\sigma}\right) 
%             - c\sigma\,\phi\left(\frac{\bar{b} - \mu}{\sigma}\right) - (a + c\,\mu) \Phi\left(\frac{\bar{a} - \mu}{\sigma}\right) 
%             + c\sigma\,\phi\left(\frac{\bar{a} - \mu}{\sigma}\right).
%         \end{aligned}
%     \end{equation*}
% \end{lemma}

%\proof{Proof.}
%A change of variable and an application of Lemma~\ref{lem:2}  yield the result. 
%\hfill \Halmos 
%\endproof

\proof{Proof of Proposition~\ref{prop: recourse_j}.}

For notational simplicity, let us suppress \(x\) and suppose that $\xi \sim N(\mu,\sigma^2)$. By Lemma~\ref{lem: affine_cost_j}, we have \(R(\xi) = \max_{k=0}^{|\Cs{I}|} \bar{h}_k(\xi) \), where \(\bar{h}_k(\xi) = a_k + c_k \, \xi\),  $k=0, \dots, |\Cs{I}|$. 
Let \(\hat{\xi}_0 := +\infty\) and \(\hat{\xi}_{|\Cs{I}|+1} := -\infty\). By Proposition~\ref{prop: recourse_j}, we have $\mathbb{E}[R(\xi)] = \sum_{k=0}^{|\Cs{I}|} \bar{H}_k$, where 
    $\bar{H}_k := \int_{\hat{\xi}_{k+1}}^{\hat{\xi}_k} h_k(\xi)\,f(\xi)\,d\xi$, $k = 0, \dots, |\Cs{I}|$, with \(f(\cdot)\) denoting the pdf of \(\xi\). For simplicity, define
    $\hat{z}_k := \frac{\hat{\xi}_k - \mu}{\sigma}$, $k = 0, \dots, |\Cs{I}|+1$.
    A change of variable and an application of Lemma~\ref{lem:2}  yield 
    %\begin{equation*}
        $\bar{H}_k = \left(a_k + c_k \, \mu \right) \, \Phi\!\left(\hat{z}_k\right) - c_k \, \sigma \, \phi\!\left(\hat{z}_k\right)- \left(a_k + c_k \, \mu \right) \, \Phi\!\left(\hat{z}_{k+1}\right) + c_k \, \sigma \, \phi\!\left(\hat{z}_{k+1}\right)$,   
    %\end{equation*}
    for $k = 0, \dots, |\Cs{I}|$. 
    Therefore,
    \begin{align*}
        \sum_{k=0}^{|\Cs{I}|} \bar{H}_k = &   (a_0 + c_0 \mu) \Phi\left(\hat{z}_0\right) - c_0 \sigma \phi\left(\hat{z}_0\right)
        + \sum_{k=1}^{|\Cs{I}|} (c_{k-1} - c_k)\Big[\left(\hat{\xi}_k - 
        \mu\right)  \Phi\left(\hat{z}_k\right)+\sigma \phi\left(\hat{z}_k\right) \Big] \\
        & {} \quad - (a_{|\Cs{I}|}+ c_{|\Cs{I}|} \mu) \Phi\left(\hat{z}_{|\Cs{I}|+1}\right) + c_{|\Cs{I}|} \sigma  \phi\left(\hat{z}_{|\Cs{I}|+1}\right),
    \end{align*}
    where we used the fact that $a_{k} - a_{k-1}= 
    \sum_{i=k+1}^{|\Cs{I}|} D_{i}(c_{i} - c_{k}) \, x_{i} - \sum_{i=k}^{|\Cs{I}|} D_{i}(c_{i} - c_{k-1}) \, x_{i}= \sum_{i=k+1}^{|\Cs{I}|} D_{i}(c_{k-1} - c_{k}) \, x_{i} - D_k(c_{k} - c_{k-1})  x_k = (c_{k-1} - c_{k}) \hat{\xi}_{k+1} + (c_{k-1} - c_{k}) D_k x_k= (c_{k-1} - c_{k}) \hat{\xi}_k
    $ for $k=1, \ldots, |\Cs{I}|$.
    Note that \(\hat{\xi}_0 = +\infty\), implying \(\Phi\!\left(\hat{z}_0\right) = 1\) and \(\phi\!\left(\hat{z}_0\right) = 0\). 
    Also, \(\hat{\xi}_{I+1} = -\infty\), implying \(\Phi\!\left(\hat{z}_{I+1}\right) = 0\) and \(\phi\!\left(\hat{z}_{I+1}\right) = 0\).
    These, combined with the definition of $\Psi(z)=z\Phi(z)+ \phi(z)$, and replacing $\sigma$ with $h(n;\sigma)$ complete the proof.
\hfill \Halmos
\endproof

\section{Convergence Proof of Algorithms}
\subsection{Preliminary Definitions from Variational Analysis}
\label{sec: EC_prelim}
For $y^\ast \in \Bs{R}^n$ and radius $\epsilon$, the $\epsilon$-neighborhood of $y^\ast$ is $\Bs{N}_{\epsilon}(y^\ast):=\{y \in \Bs{R}^n: \|y-y^\ast \| < \epsilon\}$. Let $T: \Bs{R}^n \rightrightarrows \Bs{R}^m$ be a point-to-set mapping. 
Normal cone of set $\Cs{S} \subseteq \Bs{R}^n$ at point $y$ is defined as $N_{\Cs{S}}(y):=\{s: s^\top(x-y) \le 0 \; \forall x \in S\}$.
For a closed set $\Cs{S} \subseteq \Bs{R}^n$, its indicator function is defined as $i_{\Cs{S}}(y)=0$ if $y \in \Cs{S}$ and $+\infty$, otherwise.
Function $u\mapsto Z(u)$ is $L$-Lipschitz if there exists $L>0$ such that $|Z(u) - Z(u^{\prime})| \leq L\|u-u^{\prime}\| \ \forall u,u^{\prime}$. % and for some $p\ge0$. %A family of random functions $\{Z(\cdot, \xi): \xi \in \Xi\}$ is equicontinuous on a set $\Cs{U}$ almost surely if for every $\varepsilon>0$, there exists a $\eta>0$ such that $|Z(u,\xi)-Z(u^{\prime},\xi)|<\varepsilon$ whenever $\|u-u'\| < \delta$, for all $u, u^{\prime} \in \Cs{U}$, and almost every $\xi \in \Xi$. 
The graph of $T$ is defined by $\Graph T 
:= \{ (x,y) \in \Bs{R}^n \times \Bs{R}^m : y \in T(x)\}$ and its domain is given by $\dom T:= \{ x \in \Bs{R}^n : T(x) 
\neq \emptyset\}$.
Similarly, the graph of a extended-real-valued function $f: \Bs{R}^n \to \Bs{R} \cup \{+\infty\}$ is defined by $\Graph f 
:= \{ (x,y) \in \Bs{R}^n \times \Bs{R}^m : y = f(x)\}$ and its domain is given by $\dom f:= \{ x \in \Bs{R}^n : f(x) < +\infty\}$.
For each $x \in \dom f$, the Fréchet subdifferential of $f$ at $x$, written $\widehat{\partial} f(x)$, is the set of vectors 
$v \in \mathbb{R}^n$ which satisfy
%\begin{equation*}   
$\liminf_{\substack{y \to x \\ y 
\neq x}}
\frac{f(y) - f(x) - \langle v, y - x \rangle}
{\|x - y\|}
\ge 0$.
%\end{equation*}
When $x 
\notin \dom f$, we set $\widehat{\partial} f(x) = \emptyset$.
The subdifferential of $f$, written $\partial f (x)$, is defined as follows
\begin{equation*}
    \partial f(x):= \left\{ v \in \Bs{R}^n \; : \; \exists\, x^k \to x,\; f(x^k) \to f(x),\; v^k \in \widehat{\partial} f(x^k),\; v^k \to v \right\}.
\end{equation*}
It is straightforward to verify from the definition the following closedness property
of $\partial f$. 
Let $\{(x^k, v^k)\}_{k \in \Bs{N}}$ be a sequence in 
$\Bs{R}^n \times \Bs{R}^n$ such that $(x^k, v^k) \in \Graph \partial f$ for all $k \in \Bs{N}$. 
If $(x^k, v^k) \to (x,v)$ and $f(x^k) \to f(x)$, then $(x,v) \in \Graph \partial f$.
The distance of point $b$ to a set $\Cs{B}$ is defined as $\dist(b,\Cs{B})=\inf \{x \in \Cs{B}: \|b-x\|\}$. 
Consider two sets $\Cs{B}^{1}, \Cs{B}^{2} \subseteq \Bs{R}^{n}$. The Hausdorff distance between $\Cs{B}^{1}$ and $\Cs{B}^{2}$ is denoted by   $d_{H}(\Cs{B}^{1}, \Cs{B}^{2})$ and is defined as 
%\begin{equation*}
 $   d_{H}(\Cs{B}^{1}, \Cs{B}^{2}):=\max\{ \sup_{b^{2} \in \Cs{B}^{2}} \ \dist (b^{2},\Cs{B}^{1}), \sup_{b^{1} \in \Cs{B}^{1}}  \ \dist (b^1,\Cs{B}^{2})\}$.
\begin{definition}[Quadratic Growth]
    \label{def: quadratic_growth}
    A proper and lower semicontinuous function $f : \Bs{R}^n \to \Bs{R} \cup \{+\infty\}$ has a quadratic growth at a local minimum $\bar{x}$ if there exist constants $\varepsilon > 0$ and $c > 0$ such that for all $y \in \Bs{N}_{\varepsilon}(\bar{x})$, we have 
    %\begin{equation*}
    $    f(y) \ge f(\bar x) + \frac{c}{2} \|y-\bar{x}\|^2$. 
    %\end{equation*}
\end{definition}

% \begin{definition}[Strong Convexity]
% \label{def: strong_convexity}
% A function $f : \Bs{R}^n \to \Bs{R} \cup \{+\infty\}$ is $\mu$-strongly convex if for all $x,y$ and $v \in \partial f(x)$, we have 
% %\begin{equation*}
% $f(y) \ge f(x) + \langle v, y-x \rangle + \frac{\mu}{2}\|y-x\|^2$, 
% %\end{equation*}
% with $\mu > 0$.
% Equivalently, $f$ is $\mu$-strongly convex if for all $x,y$ and $v \in \partial f(x)$ for all  $x,y$ and $v\in\partial f(x)$, $w\in\partial f(y)$, we have 
% %\begin{equation*}
% $\langle v-w,\; x-y\rangle \ge \mu\|x-y\|^2$.
% %\end{equation*}
% \end{definition}

\begin{definition}{(Kurdyka-Łojasiewicz (KL) function 
\citep[Definition~2.4]{attouch2013convergence})}
\label{def:KL}
    A function $f : \Bs{R}^n \to \Bs{R} \cup \{+\infty\}$  is said to have the KL property at $x^\ast \in \dom \partial f$ if there exist 
$\eta \in (0,+\infty]$, a neighborhood $U$ of $x^\ast$, 
and a continuous concave function 
$\vartheta : [0,\eta) \to \Bs{R}_+$ such that:
%\begin{enumerate}
    (i) $\vartheta(0)=0$,
    (ii) $\vartheta$ is $C^1$ on $(0,\eta)$,
    (iii) for all $s \in (0,\eta)$, $\vartheta'(s) > 0$, and 
    (iv) for all 
    $x \in U \cap \{x : f(x^\ast) < f(x) < f(x^\ast)+\eta \}$, KL holds:
    \begin{equation}
    \label{eq: KL_inequality}
    \vartheta'\big(f(x)-f(x^\ast)\big) \cdot 
    \dist \big(0,\partial f(x)\big)
    \ge 1.
    \end{equation}
%\end{enumerate}
Proper lower semicontinuous functions which satisfy the KL inequality at each point of $\dom \partial f$ are called KL functions. 
\end{definition}
\subsection{Technical Lemmas}

\begin{definition}[Blockwise Local Minimum]
\label{def:local_min}
A point $(x^\ast, n^\ast) \in \Cs{X} \times \Cs{N}$ is a blockwise local minimum of $F$ if $x^\ast \in\argmin_{x\in\Cs{X}} F(x,n^\ast)
$ and there exists some $\epsilon >0$ such that $F(x^\ast, n^\ast) \le F(x^\ast, n)$ for all $n \in \Cs{N} \cap \Bs{N}_\epsilon(n^\ast)$. 
\end{definition}

\begin{lemma}[KL Function]
\label{lem:KL}
    Functions $F, F_M, S_M: (x,n) \mapsto \Bs{R}$, as defined in \eqref{eq: abstract_model}, \eqref{eq: abstract_SAA}, and \eqref{eq: pwl}, respectively, are a tame function (in a suitable o-minimal structure) and are  KL functions. % on $\Bs{R}^{|\Cs{I}|} \times \Bs{R}^{|\Cs{J}|}$. 
\end{lemma}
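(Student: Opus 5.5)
Our plan is to establish definability of $F$, $F_M$, and $S_M$ in a single o-minimal structure and then invoke the nonsmooth KL theorem for definable functions (Bolte, Daniilidis, Lewis, and Shiota), which gives the KL property at every point of $\dom \partial f$ for any proper lower semicontinuous definable function. To make these functions proper and lower semicontinuous on the whole space, we interpret them together with $i_{\Cs{X}\times\Cs{N}}$ (or simply as real-valued continuous functions). Continuity is immediate: $\Psi$ is $C^\infty$, $h_j>0$ is continuous by Assumption~\ref{assum:properties}, $(\cdot)_+$ is continuous, and $\Psi_M$ is a continuous piecewise-linear interpolant. The sets $\Cs{X}$ (finite) and $\Cs{N}$ (a box) are semialgebraic, so their indicator functions are definable in any o-minimal expansion of the real field.

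For the definability step, we work in the structure $\Bs{R}_{an,\exp}$ of globally subanalytic sets with exponentiation, which is o-minimal (van den Dries–Miller). First, $\phi(z)=(2\pi)^{-1/2}e^{-z^2/2}$ is definable because $\exp$ is. The delicate object is $\Phi$, and hence $\Psi(z)=z\Phi(z)+\phi(z)$. Our approach is to note that $\Phi$ is not globally analytic on $\Bs{R}$, but its restriction to any compact interval is a restricted analytic function. Since the arguments $g_{ij}(x)/h_j(n_j;\sigma_j)$ range over the bounded set $\Cs{Z}$ defined in Section~\ref{sec: alg_PWL} (bounded because $\Cs{X}$ is finite, $\Cs{N}$ is compact, and $h_j$ is bounded away from zero on $[0,b]$), it suffices to use $\Psi$ restricted to $[\underline u,\overline u]$, which is definable in $\Bs{R}_{an}$. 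If one wants $\Psi$ on all of $\Bs{R}$, we would fall back on the Pfaffian closure of $\Bs{R}_{an,\exp}$ (Speissegger), in which the antiderivative $\Phi$ of the definable $\phi$ is definable.

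The next issue is $h_j$, whose definability must be assumed or verified. For the paper's instance $h_j=\sigma_j/\sqrt{1+\omega n_j}$ it is semialgebraic. In general, we would state that $h_j(\cdot;\sigma_j)$ is taken definable in the same structure. This is the main obstacle, since Assumption~\ref{assum:properties} alone does not imply tameness, so we would make this an explicit standing hypothesis. Given that, the composition $h_j\,\Psi(g_{ij}/h_j)$ is definable by closure under composition, products, and quotients with nonvanishing denominators. Finite sums then give $f$ and $F$.

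For $F_M$, each term $h_j(\zeta^t+g_{ij}/h_j)_+ = (\zeta^t h_j + g_{ij})_+$ is a max of definable functions, with the $\zeta^t$ fixed reals, so $F_M$ is definable without needing $\Phi$ at all. For $S_M$, $\Psi_M$ is piecewise affine with finitely many pieces and hence semialgebraic. This makes $S_M$ definable once $h_j$ is. Finally, applying the definable KL theorem to each of $F+i_{\Cs{X}\times\Cs{N}}$, $F_M+i_{\Cs{X}\times\Cs{N}}$, and $S_M+i_{\Cs{X}\times\Cs{N}}$ yields a concave desingularizer $\vartheta(s)=cs^{1-\theta}$ satisfying Definition~\ref{def:KL} at every point of $\dom\partial f$, which concludes the lemma.
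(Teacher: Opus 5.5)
Your proposal follows the same route as the paper (definability in an o-minimal structure, then the nonsmooth KL theorem for definable lower semicontinuous functions), and it is more careful than the paper's one-line proof, which glosses over exactly the two points you make explicit: the definability of $h_j(\cdot;\sigma_j)$, which Assumption~\ref{assum:properties} does not imply (the semialgebraic choice $h_j=\sigma_j/\sqrt{1+\omega n_j}$ does satisfy it), and the fact that $\Phi$, although entire, is not globally subanalytic, so it must be restricted to the bounded range $\Cs{Z}$ or handled in a Pfaffian extension. One small correction: the power-type desingularizer $cs^{1-\theta}$ is guaranteed for polynomially bounded structures such as $\Bs{R}_{an}$, which already suffices once $\phi,\Phi,\Psi$ are restricted to a compact interval and $h_j$ is subanalytic, but not in $\Bs{R}_{an,\exp}$ or its Pfaffian closure, where the theorem only yields a definable concave $\vartheta$ --- though that is all Definition~\ref{def:KL} requires.
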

\proof{Proof.}
    The stated functions are proper lower semicontinuous. Moreover, they are tame functions in the sense of \citet{bolte2009tame}, connecting to semialgebraic functions and satisfying the KL property \citep{attouch2013convergence}; thus, they are KL functions by Definition ~\ref{def:KL}. 
\hfill \Halmos 
\endproof  

\begin{lemma}[Sufficient Decrease]
\label{lem:sifficient_decrease}
Suppose there exists $K_0$ such that $x^k=x^\ast$ for all $k\ge K_0$. 
Step 2 of Algorithm \ref{alg:bcd} guarantee a nonincreasing sequence of $\{Q(x^\ast,n^{k};M)\}$ with a sufficient decrease, proportionate to $\|n^{k+1} - n^k\|$, unless $n^{k+1}=n^k$, i.e.,  $Q(x^\ast,n^{k+1};M) \le Q(x^\ast,n^{k};M) - \frac{\lambda_k}{2} \|n^{k+1} - n^k\|^2$. 
\end{lemma}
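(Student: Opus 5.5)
The argument is a direct comparison of the proximal subproblem's objective at its minimizer $n^{k+1}$ with its value at the feasible point $n^k$. Fix $k \ge K_0$, so that $x^{k+1}=x^\ast$. By \eqref{eq: n_step}, $n^{k+1}$ minimizes
\[
\Gamma_k(n):=Q(x^\ast,n;M)+\frac{\lambda_k}{2}\|n-n^k\|^2
\]
over $\Cs{N}$. First we check that this minimum is attained. $\Cs{N}=[0,b]^{|\Cs{J}|}$ is compact, and $Q(x^\ast,\cdot;M)$ is continuous on $\Cs{N}$, since it equals $F$, $F_M$, or $S_M$ (compositions of continuous $h_j$, $\Psi$ or $(\cdot)_+$ or $\Psi_M$, with $h_j>0$). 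Moreover $\Gamma_k$ is strongly convex, because $Q(x^\ast,\cdot;M)$ is convex by assumption and the proximal term is $\lambda_k$-strongly convex. Hence $n^{k+1}$ exists and is unique.

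Since $n^k\in\Cs{N}$ is feasible, the simplest bound is $\Gamma_k(n^{k+1})\le\Gamma_k(n^k)=Q(x^\ast,n^k;M)$. Rearranging gives
\[
Q(x^\ast,n^{k+1};M)\le Q(x^\ast,n^k;M)-\frac{\lambda_k}{2}\|n^{k+1}-n^k\|^2,
\]
which is exactly the claimed inequality. Convexity of $Q$ is not even needed for this step.

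From the inequality, the sequence $\{Q(x^\ast,n^k;M)\}_{k\ge K_0}$ is nonincreasing. The decrease is strictly positive whenever $n^{k+1}\neq n^k$, because $\lambda_k>0$.

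If a sharper constant is wanted, strong convexity of $\Gamma_k$ together with the optimality condition $0\in\partial\Gamma_k(n^{k+1})+N_{\Cs{N}}(n^{k+1})$ yields $\Gamma_k(n^k)\ge\Gamma_k(n^{k+1})+\frac{\lambda_k}{2}\|n^k-n^{k+1}\|^2$. This upgrades the decrease to $\lambda_k\|n^{k+1}-n^k\|^2$, but the statement only requires the weaker version.

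The only delicate point is the case where no proximal term is used ($Q=F$, $\lambda_k=0$). There strict convexity of $F(x^\ast,\cdot)$ (Proposition~\ref{prop:convexity_n}) gives $F(x^\ast,n^{k+1})<F(x^\ast,n^k)$ unless $n^{k+1}=n^k$. Beyond this remark, I expect no real obstacle.
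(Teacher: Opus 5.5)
Your proof is correct and follows the paper's argument: the paper's one-line ``by suboptimality'' is your comparison of the proximal objective at the minimizer $n^{k+1}$ with its value at the feasible point $n^k$, followed by rearrangement. Your additions (attainment of the minimum and the sharper decrease $\lambda_k\|n^{k+1}-n^k\|^2$) are fine, with one caveat: the sharper bound relies on convexity of $Q(x^\ast,\cdot;M)$, which the paper postulates rather than proves, and which is not automatic for $Q=F_M$, since terms with $\zeta^t<0$ can be concave in $n_j$. The stated inequality needs no convexity at all, as you correctly note.
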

\proof{Proof.}
    %For the $x$-update, by the global optimality, we have $P(x^{k+1}, n^{k};M) \le P(x^{k}, n^{k};M)$.
By suboptimality, $Q(x^\ast, n^{k+1};M) \le Q(x^\ast, n^{k};M) - \frac{\lambda_k}{2} \|n^{k+1} - n^k\|^2$, 
%Summing these yields 
%\begin{equation}
%\label{eq:sufficient_decrease}
% F(x^{k+1}, n^{k+1}) \le F(x^{k}, n^{k}) - \frac{\lambda_k}{2} \|n^{k+1} - n^k\|^2, 
%\end{equation}
proving the sequence of values is nonincreasing and has a sufficient decrease, as $\lambda_k >0$, unless $n^{k+1}=n^k$.
\hfill \Halmos
\endproof
\begin{lemma}[Convergence to Critical Point]
\label{lem:convergence}
Suppose there exists $K_0$ such that $x^k=x^\ast$ for all $k\ge K_0$.
The sequence $\{n^k\}$ of Algorithm~\ref{alg:bcd} converges to a critical point of $Q(x^\ast,\cdot;M)$ over $\Cs{N}$.  
\end{lemma}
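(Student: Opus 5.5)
Once $k\ge K_0$ the $x$-block is frozen at $x^\ast$, so the iteration reduces to a proximal point method on $\Phi(n):=Q(x^\ast,n;M)+i_{\Cs{N}}(n)$. I will prove that $\{n^k\}_{k\ge K_0}$ converges by the standard Kurdyka--Łojasiewicz argument of \citet{attouch2013convergence}, using three ingredients: sufficient decrease, a relative-error bound, and the KL property. Throughout I assume the proximal parameters are bounded, i.e., $0<\underline{\lambda}\le\lambda_k\le\overline{\lambda}<\infty$.

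\textbf{Step 1 (sufficient decrease).} Lemma~\ref{lem:sifficient_decrease} gives
$\Phi(n^{k+1})+\frac{\underline{\lambda}}{2}\|n^{k+1}-n^k\|^2\le \Phi(n^k)$.
Since $\Cs{N}$ is compact and $Q(x^\ast,\cdot;M)$ is continuous, $\Phi$ is bounded below on $\Cs{N}$. Hence $\{\Phi(n^k)\}$ is nonincreasing and converges to some $\Phi^\ast$, and summing the inequality gives $\sum_k\|n^{k+1}-n^k\|^2<\infty$. The sequence $\{n^k\}$ lies in the compact set $\Cs{N}$, so its set of cluster points $\omega$ is nonempty and compact.

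\textbf{Step 2 (relative error and criticality of cluster points).} The optimality condition of the $n$-step \eqref{eq: n_step} is
$0\in\partial\Phi(n^{k+1})+\lambda_k(n^{k+1}-n^k)$.
Here the subdifferential splits as a sum because $Q(x^\ast,\cdot;M)$ is convex and finite on a neighborhood of $\Cs{N}$. Setting $v^{k+1}:=-\lambda_k(n^{k+1}-n^k)\in\partial\Phi(n^{k+1})$ gives $\|v^{k+1}\|\le\overline{\lambda}\|n^{k+1}-n^k\|\to0$. Because $\Phi$ is continuous on $\Cs{N}$, $\Phi(n^{k_l})\to\Phi(\bar n)$ along any subsequence $n^{k_l}\to\bar n$. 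The closedness of $\Graph\partial\Phi$ (EC preliminaries) then gives $0\in\partial\Phi(\bar n)$, so every point of $\omega$ is critical and $\Phi\equiv\Phi^\ast$ on $\omega$.

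\textbf{Step 3 (KL and finite length).} This is the main obstacle. By Lemma~\ref{lem:KL}, $Q(x^\ast,\cdot;M)$ is tame; adding the indicator of a box keeps it tame, so $\Phi$ is a KL function. I will use the uniformized KL property on the compact set $\omega$, where $\Phi$ is constant, to obtain a single desingularizing function $\vartheta$ valid for all large $k$.

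If $\Phi(n^{k})=\Phi^\ast$ for some $k$, Step 1 forces the iterates to stop moving, and we are done. Otherwise, concavity of $\vartheta$, the KL inequality \eqref{eq: KL_inequality}, and Steps 1--2 give
$$\|n^{k+1}-n^k\|^2\le C\|n^k-n^{k-1}\|\big[\vartheta(\Phi(n^k)-\Phi^\ast)-\vartheta(\Phi(n^{k+1})-\Phi^\ast)\big].$$
By AM--GM,
$$2\|n^{k+1}-n^k\|\le\|n^k-n^{k-1}\|+C\big[\vartheta(\Phi(n^k)-\Phi^\ast)-\vartheta(\Phi(n^{k+1})-\Phi^\ast)\big].$$
Summing over $k$ telescopes the bracket and yields $\sum_k\|n^{k+1}-n^k\|<\infty$. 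Hence $\{n^k\}$ is Cauchy and converges to a single $\bar n\in\omega$, which is critical for $Q(x^\ast,\cdot;M)$ over $\Cs{N}$ by Step 2.

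In the case $Q=F$, strict convexity (Proposition~\ref{prop:convexity_n}) makes the critical point unique, which gives a shortcut: the whole sequence converges to it directly from Step 2, without the KL argument.
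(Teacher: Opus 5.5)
Your proposal is correct and uses essentially the same Kurdyka--{\L}ojasiewicz finite-length argument as the paper: sufficient decrease from Lemma~\ref{lem:sifficient_decrease}, the relative-error bound from the optimality condition of the $n$-step, and the telescoped inequality \eqref{claim3}. The one real difference is that you apply the uniformized KL property on the compact cluster set $\omega$, on which $Q(x^\ast,\cdot;M)$ is constant, whereas the paper localizes at a single cluster point and runs the trapping induction \eqref{claim1}--\eqref{claim4} imported from \citet[Lemma~2.6]{attouch2013convergence}; your version is a cleaner way to ensure the KL inequality holds along the whole tail.

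Your standing assumption $0<\underline{\lambda}\le\lambda_k\le\overline{\lambda}$ is not an extra weakness. The paper's proof also needs it implicitly (e.g., to get $\lambda_k\|n^k-n^{k-1}\|\to0$), and without an upper bound the statement fails: for $Q(x^\ast,n;M)=dn$ with $d>0$, $n^0=b$, and $d\sum_k\lambda_k^{-1}<b$, the iterates $n^{k+1}=n^k-d/\lambda_k$ converge to a point of $(0,b)$, which is not critical.
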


Part of the proof of Lemma~\ref{lem:convergence} follows similarly to the proof of \cite[Lemma~2.6]{attouch2013convergence}, with the exception that the primitive assumptions in that result need to be proved. We make a proper citation to that proof to relegate.

\proof{Proof of Lemma~\ref{lem:convergence}.}
%     We first show $\{n^k\}$ converges to some point $\bar{n}$. By Lemma \ref{lem:sifficient_decrease}, we have sufficient decrease. Hence, from \eqref{eq:sufficient_decrease}, for any $K>0$, we have 
% \begin{equation*}
%     \sum_{k=0}^{K} \frac{\lambda_k}{2} \|n^{k+1} - n^k\|^2 \le F(x^{0}, n^{0}) - F(x^{K+1}, n^{K+1}). 
% \end{equation*}
% Given that $\Cs{X}$ and $\Cs{N}$ are bounded and $F$ is continuous on $\Bs{R}^n \times \Bs{R}$,  $F(x^{K+1}, n^{K+1})$ is bounded below. Hence, the series $\sum_{k=0}^{\infty} \frac{\lambda_k}{2} \|n^{k+1} - n^k\|^2 $ converges, implying that $\|n^{k+1} - n^k\| \to 0$ as $k \to \infty$. 
First, since $\Cs{N}$ is bounded, the sequence $\{n^k\}$ admits a convergent subsequence $n^{k_t}\to \bar n$.
Let $G(n) = Q(x^\ast,n;M)+  i_{\Cs{N}}(n)$.  
Since $\{G(n^k)\}_{k >K_0}$ is a nonincreasing sequence by Lemma \ref{lem:sifficient_decrease}, the limit $\ell:=\lim_{k\to\infty} G(n^k)$ exists.
Moreover, by continuity of $G$ and $n^{k_t}\to \bar n$, we have $G(n^{k_t})\to G(\bar n)$, and thus $\ell=G(\bar n)$. In particular, $G(n^k)\ge G(\bar n)$ for all $k\ge K_0$.
By Lemma~\ref{lem:KL}, $G(x^\ast,n)$ is a KL function and its summation with the indicator function $i_{\Cs{N}}(n)$ preserves KL; hence, $G$ is a KL function. Let $U$, $\eta$, and $\vartheta: [0,\eta) \mapsto \Bs{R}$ be the objects appearing in Definition \ref{def:KL}. 
Let $\delta,\rho>0$ be such that $\Bs{N}_\delta(\bar{n}) \subset U$ with $\rho \in (0,\delta)$, with the condition that $\eta < \frac{\lambda_K}{2}(\delta - \rho)^2$. 
By continuity of $G$ and $\vartheta$, there exists $K \ge K_0$ such that for all $k \ge K$, we have 
\begin{equation}
\label{continuity1}
    G(\bar{n}) \le G(n^k) < G(\bar{n}) + \eta, 
\end{equation}
and 
\begin{equation}
\label{continuity2}
    \|\bar{n} - n^{K}\| + 2 \sqrt{\frac{G(n^K)-G(\bar{n})}{C_K}} + \frac{D_{K-1}}{C_K} \vartheta \big(G(n^K)- G(\bar{n})\big) < \rho,
\end{equation}
where $C_k:=\lambda_k/2$ and $D_k:=\lambda_k$.
The key to our proof is the following for all $j \ge K+1$:
    \begin{equation}
     n^j \in \Bs{N}_{\rho}(\bar{n}), \label{claim1}   
    \end{equation}
    \begin{equation}
        \label{claim2}
        \sum_{i=K+1}^{j} \|n^{i+1} - n^i\| + \|n^{j+1} - n^j\| \le \|n^{K+1} - n^{K}\| + 
\frac{D_{j-1}}{C_j} \Big[
\vartheta\big(G(n^{K+1}) - G(\bar{n})\big) -
\vartheta\big(G(n^{j+1}) - G(\bar{n})\big)
\Big].
    \end{equation}
Before, we prove \eqref{claim1} and \ref{claim2}, we claim that for a fixed $k \ge K+1$, if  $n^k \in \Bs{N}_{\rho}(\bar{n})$, then 
\begin{equation}
\label{claim3}
     2\|n^{k+1} - n^k\| \le \|n^k - n^{k-1}\|+ \frac{D_{k-1}}{C_{k}}\vartheta\big(G(n^k) - G(\bar{n})\big) - \vartheta\big(G(n^{k+1}) - G(\bar{n})\big).
\end{equation}
Clearly, if $n^{k+1}=n^k$, this inequality holds trivially. Assume that $n^{k+1}
\neq n^k$, which implies that $G(n^k)>G(n^{k+1})$ by Lemma \ref{lem:sifficient_decrease}; thus, $G(n^k) > G(\bar{n})$. 
By the optimality of $n^{k}$ for the $n$-step, we have as $\lambda_k(n^{k-1} - n^{k}) \in 
\nabla_n Q(x^\ast, n^{k};M) + N_{\Cs{N}}(n^{k})$. That is, $\lambda_k(n^{k-1} - n^{k}) \in \partial G(n^{k})$.  Therefore, 
%\begin{equation*}
$\text{dist}(0, \partial G(n^{k})) \leq D_k \|n^{k} - n^{k-1}\|$. 
%\end{equation*}
Moreover, by the KL inequality, we have $\text{dist}(0, \partial G(n^{k}))>0$ as $G(n^k) > G(\bar{n})$. Therefore, $n^k
\neq n^{k-1}$. 
Combining with the KL inequality, we have
\begin{equation}
%\label{KL}
 \vartheta'\big(G(n^k) - G(\bar{n})\big) \geq \frac{1}{\text{dist}(0, \partial G(n^{k}))} \ge \frac{1}{D_k \|n^{k} - n^{k-1}\|}.
\end{equation} 
Moreover, by Lemma \ref{lem:sifficient_decrease}, we have 
%\begin{equation}
%\label{sufficient_decrease}
$ G(n^k) - G(n^{k+1}) \geq C_k \|n^{k+1} - n^k\|^2$. 
%\end{equation}
The concavity of $\vartheta$, then implies that  
\begin{equation*}
\vartheta\big(G(n^k) - G(\bar{n})\big) - \vartheta\big(G(n^{k+1}) - G(\bar{n})\big) \geq \vartheta'\big(G(n^k) - G(\bar{n})\big) (G(n^k) - G(n^{k+1})\big),    
\end{equation*}
which, combined with the previous two conclusions, yields 
\begin{equation*}
\frac{D_{k-1} }{C_k}\Big[\vartheta\big(G(n^k) - G(\bar{n})\big) - \vartheta\big(G(n^{k+1}) - G(\bar{n})\big)\Big] \geq \frac{\|n^{k+1} - n^k\|^2}{\|n^{k} - n^{k-1}\| }.
\end{equation*}
Now, multiplying the inequality by $\|n^{k} - n^{k-1}\|$, taking the square root on both sides and using the identity $2\sqrt{ab} \le a+b$ yield \eqref{claim3}. 
We also claim that 
\begin{equation}
\label{claim4}
    n^k \in \Bs{N}_{\rho}(\bar{n}) \; \Rightarrow \; n^{k+1} \in \Bs{N}_{\delta}(\bar{n}) \ \text{with} \ G(n^{k+1})\ge G(\bar n). 
\end{equation}
To see this, note that by Lemma \ref{lem:sifficient_decrease}, we have 
%\begin{equation*}
    $\|n^{k+1} - n^{k} \| \le \sqrt{\frac{G(n^k)-G(n^{k+1})}{D_k}} \le \sqrt{\frac{\eta}{D_k}}< \delta - \rho$. 
%\end{equation*}
Hence, by the triangle inequality we have $\|n^{k+1}-\bar{n}\| < \delta$, which expresses that $n^{k+1} \in \Bs{N}_{\delta}(\bar{n})$. 
Claims \eqref{claim1} and \eqref{claim2} can now be proved by induction on $j$, similar to that of \cite[Lemma~2.6]{attouch2013convergence}, by noting that \eqref{continuity1}, \eqref{continuity2}, \eqref{claim3}, and \eqref{claim4} correspond to Eqs. (3), (4), (9), and (5) in that proof, respectively, while \eqref{claim1} and \eqref{claim2} correspond to Eqs. (6) and (7), respectively, after a proper translation of indices. 
%Take $j=K+1$. By \eqref{continuity1} and \eqref{claim4}, we have $G(n^{K+1}) \ge G(\bar{n})$ and $n^{K+1} \in \Bs{N}_{\delta}(\bar{n})$. Therefore, 
% \begin{equation}
% \label{claim5}
% \|n^{K+1} - n^{K} \| \le \sqrt{\frac{G(n^K)-G(n^{K+1})}{D_K}} \le \sqrt{\frac{G(n^K)-G(\bar{n})}{D_K}}. 
% \end{equation}
% So, \eqref{continuity2} and the triangle inequality yield
% \begin{equation*}
%     \|\bar{n} - n^{K+1}\| \le \|\bar{n} - n^{K}\| + \| n^{K} - n^{K+1}\| \le \|\bar{n} - n^{K}\|  +  \sqrt{\frac{G(n^K)-G(\bar{n})}{D_K}} < \rho,
% \end{equation*}
% that is, $n^{K+1} \in \Bs{N}_{\rho}(\bar{n})$. 
% Now, \eqref{claim3} with $k=K+1$ shows that \eqref{claim2} holds with $j=K+1$. 
% Suppose that \eqref{claim1} and \eqref{claim2} hold for some $j\ge K+1$. Then, using the triangle inequality and
% \eqref{claim2}, we have 
% \begin{align*}
%     \|\bar{n} - n^{j+1}\| & \le
% \|\bar{n} - n^K\| +
% \|n^K - n^{K+1}\|
% + \sum_{i=K+1}^{j} \|n^{i} - n^{i+1}\| \\ 
% & \le
% \|\bar{n} - n^{K}\|
% +
% 2\|n^{K} - n^{K+1}\|
% +
% \frac{D_{j-1}}{C_j}
% \Big[
% \vartheta\big(G(n^{K+1}) - G(\bar{n})\big)
% -
% \vartheta\big(G(n^{j+1}) - G(\bar{n})\big)
% \Big]
% \end{align*}
% Now, combined with \eqref{continuity2} and \eqref{claim5}, we conclude that $n^{j+1} \in \Bs{N}_{\rho}(\bar{n})$. 
% Hence, \eqref{claim3} holds with $k=j+1$. 
% Combining that with \eqref{claim2} for $j$, yield \eqref{claim2} for $j+1$. 
A direct use of \eqref{claim2} yields 
%\begin{align*}
    $\sum_{k=K+1}^j \|n^{k+1} - n^k\| \leq 
    \|n^{K+1} - n^{K}\| +  2\vartheta\big(G(n^{K+1}) - G(\bar{n})\big)$, 
%\end{align*}
and, thus, $\sum_{k=K}^\infty \|n^{k+1} - n^k\|$ converges. Hence, the sequence $\{n^k\}$ is Cauchy and convergent to  $\tilde{n}=\bar{n}$.
Moreover, the optimality condition for the $n$-step implies that $\lambda_k(n^{k-1}-n^k)\in \partial G(n^k)$ for all $k\ge K_0+1$. Therefore,
%\begin{equation*}
$\dist\big(0,\partial G(n^k)\big) \le \lambda_k\|n^k-n^{k-1}\| \to 0$.
%\end{equation*}
In particular, there exists $v^k \in \partial G(n^k)$ such that $\|v^k\|\to 0$. By the closedness of $\Graph\partial G$ and $G(n^k)\to G(\bar n)$, we obtain $0\in \partial G(\bar n)$, i.e., $\bar n$ is a critical point of $Q(x^\ast,\cdot;M)$ over $\Cs{N}$.
\hfill \Halmos
% Now, by summing \eqref{claim3} from $k = K$ to $N$, for some $N$, we have 
% \begin{align*}
%     2 \sum_{k=K}^N \|n^{k+1} - n^k\| & \leq \sum_{k=K}^{N}  \|n^k - n^{k-1}\| + \sum_{k=K}^{N} \frac{D_{k-1}}{C_k}  \vartheta\big(G(n^K) - G(\bar{n})\big) - \vartheta\big(G(n^{N+1}) - G(\bar{n})\big),\\
%     &\le \sum_{k=K}^{N} \|n^k - n^{k-1}\| +   2 \vartheta\big(G(n^K) - G(\bar{n})\big),\\
%     & \le \sum_{k=K}^{N}  \|n^k - n^{k-1}\| +  2 \vartheta\big(G(n^K) - G(\bar{n})\big)
% \end{align*}
% where the second inequality is due to the nonnegativity of $\vartheta$ and the third inequality is due to the fact that $\frac{D_{k-1}}{C_k} \le 2$. 
\endproof

\subsection{Convergence Proof of Theorem \ref{thm:min_convergence}}
\label{secEC:min_convergence} 

Without a proximal term, Theorem~\ref{thm:min_convergence} can be proved using standard block coordinate descent convergence arguments; see, e.g., \cite{zangwill1969nonlinear} and \citet[Proposition3.7.1]{bertsekas2016nonlinear}. We provide a proof for the case in which a proximal term is incorporated into the $n$-step, thereby enabling a direct application when such a term is required for convergence; see Section~\ref{secEC: alg_SAA}.

\proof{Proof of Theorem \ref{thm:min_convergence}.}
Suppose that the sequence $\{n^k\}$ converges to some $n^\ast\in \Cs{N}$. By Assumption~\ref{assum:stability}, there exists $K_0$ such that $x^{k}=x^\ast$ for all $k\ge K_0$, where $x^\ast = T(n^k) \in \argmin_{x \in \Cs{X}} F(x,n^{k})$.
By Lemma~\ref{lem:convergence}, $n^\ast$ satisfies the stationarity condition
$0 \in \partial G(n^\ast)$, 
where $G(n):=F(x^\ast,n)+i_{\Cs{N}}(n)$. 
The descent property in Lemma \ref{lem:sifficient_decrease} ensures that $n^\ast$ cannot be a local maximizer.
We show that $(x^\ast,n^\ast)$ is a local minimum of $F$ over $\Cs{X}\times \Cs{N}$.
Since $\Cs{X}$ is finite, there exists $r>0$ such that $\Cs{X}\cap \Bs{N}_r(x^\ast)=\{x^\ast\}$.
Therefore, for every $(x,n)\in (\Cs{X}\times \Cs{N})\cap \big(\Bs{N}_r(x^\ast)\times \Cs{N}\big)$ we have $x=x^\ast$ and hence $F(x,n)=F(x^\ast,n)$.
We claim that that $n^\ast$ is an (isolated) strict local minimizer of $F(x^\ast,\cdot)$ over $\Cs{N}$.
For $j \in \Cs{J}$, either $n^\ast_j=0$ or $n^\ast_j \in (0,b)$ (Lemma \ref{lem:min-g}), where in a neighborhood of $n^\ast_j$, $f_j(x,n_j)$ satisfies either a local linear growth or a local quadratic growth, respectively (Corollary \ref{cor:min-g}). Therefore, there exists $\delta>0$ such that $G(n) > G(n^\ast)$ over $\Cs{N} \cap \Bs{N}_{\delta}(n^\ast)$. 
%Because $G$ satisfies a quadratic growth over $\Bs{N}_{\delta}(n^\ast)$, there exists $c>0$ such that $G(n) \ge G(n^\ast)+\frac{c}{2}\|y-n^\ast\|^2$. Take any $y\in \Bs{N}_{\delta}(n^\ast)$ with $y
%\neq n^\ast$. Then $\|y-n^\ast\|^2>0$; thus, $G(n)> G(n^\ast)$. 
That is, $n^\ast$ is a strict local minimum of $G$. Then, $n^\ast$ is an strong (isolated) local minimizer of $G$. 
Therefore, $(x^\ast,n^\ast)$ is a local minimum of $F$ over $\Cs{X}\times \Cs{N}$ 
%Combining with the finiteness of $\Cs{X}$ (so $x=x^\ast$ locally), we conclude that $(x^\ast,n^\ast)$ is a local minimum of $F$ over $\Cs{X}\times \Cs{N}$ 
in the sense of Definition~\ref{def:local_min}. 
\hfill \Halmos
% The proximal objective in the $y$-step,
% \begin{equation*}
% H(n)= F(x^{k+1}, y) + \frac{\lambda_k}{2} (y-n^k)^2 + i_{[0,b]}(n),    
% \end{equation*}
% is $(\lambda_k - L_{k+1})$-strongly convex on $[0,b]$. Thus, $y$-subproblem admits a unique minimizer $n^{k+1}$ and $H$ is strongly metrically subregular at $n^{k+1}$ by Lemma \ref{lem:strong_convexitn_metric_subregularity}. 
% Given that the function $F(x^\ast,\cdot)$ is twice continuously differentiable on $[0,b]$, $G$ is prox-regular on $[0,b]$ by Lemma \ref{lem:prox_regualar}. 
% Under the strong convexity $H$ and its strong metric subregularity at the critical point $n^\ast$, quadratic growth $H$ near $n^\ast$ follows \cite[Theorem~3.3]{drusvyatskiy2018error}. Then, by \cite[Theorem~13.27]{rockafellar1998variational}, 
\endproof

\subsection{Convergence Proofs of the SAA}
\label{secEC: alg_SAA}
Throughout this section, recall that $F$ and $F_M$ are defined in \eqref{eq: abstract_model} and \eqref{eq: abstract_SAA}, respectively.  Also, define $\ell(\zeta^t;x,n)=  \sum_{j \in \Cs{J}} \sum_{i \in \Cs{I}} q_{ij} \left(\zeta^t h_j(n_j) + g_{ij}(x)\right)_{+}$, where $f_M(x,n)=\frac{1}{M} \sum_{t=1}^{M} \ell(\zeta^t;x,n)$ and  $f(x,n)=\e{\ell(\zeta;x,n)}$, with $\zeta \sim \textrm{Normal}(0,1)$. Finally, we define $\ell_{ij}(\zeta^t;x,n_j)=q_{ij} \big(\zeta^t h_j(n_j) + g_{ij}(x)\big)$. 

\begin{lemma}
    \label{lem:SAA}
    We have $\Psi(z)=\e{(\zeta+z)_+}$, where $\zeta \sim \textrm{N}(0,1)$ and $\Psi(z)$, defined in \eqref{eq:Psi}. 
\end{lemma}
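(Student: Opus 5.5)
The plan is to compute $\e{(\zeta+z)_+}$ directly by splitting the integral at the point where the integrand changes form, and then to evaluate the two resulting pieces with Lemma~\ref{lem:2}.

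Fix $z\in\Bs{R}$ and let $\zeta\sim\textrm{N}(0,1)$ with density $\phi$. Since $(\zeta+z)_+=0$ exactly when $\zeta\le -z$, we have
\begin{equation*}
\e{(\zeta+z)_+}=\int_{-z}^{\infty}(\zeta+z)\,\phi(\zeta)\,d\zeta
= \int_{-z}^{\infty}\zeta\,\phi(\zeta)\,d\zeta + z\int_{-z}^{\infty}\phi(\zeta)\,d\zeta .
\end{equation*}
Both pieces are finite, because $\zeta\phi(\zeta)$ and $\phi(\zeta)$ are integrable on $\Bs{R}$.

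For the first piece, Lemma~\ref{lem:2} with $a=-z$ and $b\to\infty$ gives $\phi(-z)-\lim_{b\to\infty}\phi(b)=\phi(-z)=\phi(z)$, using $\phi(b)\to 0$ and the symmetry of $\phi$. For the second piece, symmetry of the standard normal gives $\int_{-z}^{\infty}\phi=1-\Phi(-z)=\Phi(z)$. Adding the two yields $z\Phi(z)+\phi(z)=\Psi(z)$, which is exactly \eqref{eq:Psi}.

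There is no real obstacle here. The only points needing care are that the improper integral converges, so the limit in Lemma~\ref{lem:2} is legitimate, and that both symmetry identities are applied correctly.

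As a consistency check, one can also verify the identity by differentiation. Differentiating under the integral sign (justified by dominated convergence) gives $\frac{d}{dz}\e{(\zeta+z)_+}=\Pr(\zeta>-z)=\Phi(z)=\Psi'(z)$. Both sides also tend to $0$ as $z\to-\infty$, so they must agree everywhere.
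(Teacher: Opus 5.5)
Your proposal is correct and follows the paper's proof: both split $\e{(\zeta+z)_+}$ at $\zeta=-z$ into $z\int_{-z}^{\infty}\phi(\zeta)\,d\zeta+\int_{-z}^{\infty}\zeta\,\phi(\zeta)\,d\zeta$, and both evaluate the second piece using $\phi'(\zeta)=-\zeta\,\phi(\zeta)$, which is exactly Lemma~\ref{lem:2}. Your treatment of the improper limit and the symmetry identities, and your differentiation check, only make explicit details that the paper leaves implicit.
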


\proof{Proof.} We have 
    %\begin{align*}
        $\e{(\zeta+z)_+} = \int_{-\infty}^{\infty} (\zeta+z)_+ \phi(\zeta) \,d\zeta = \int_{-z}^{\infty} (\zeta+z)\,\phi(\zeta) \,d\zeta
        =  z\int_{-z}^{\infty} \phi(\zeta) \,d\zeta + \int_{-z}^{\infty} \zeta\phi(\zeta) \,d\zeta$. Now, noting that $\phi'(\zeta) = -\zeta \, \phi(\zeta)$ completes the proof. 
         %& =z \Phi(z) - \int_{-z}^{\infty} \phi'(\zeta)\,d\zeta =  z \Phi(z) + \phi(z). 
         \hfill \Halmos
    %\end{align*} 
    
\endproof   

\begin{lemma}
    \label{lem:SAA_phi}
    We have $\phi(z)=\e{\zeta \one\{\zeta+z >0\}}$, where $\zeta \sim \textrm{N}(0,1)$. 
\end{lemma}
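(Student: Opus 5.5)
The plan is to mimic the proof of Lemma~\ref{lem:SAA}: write the expectation as an integral against the standard normal density and evaluate it with the identity $\phi'(\zeta)=-\zeta\,\phi(\zeta)$, the same fact behind Lemma~\ref{lem:2}. Concretely,
\[
\e{\zeta\,\one\{\zeta+z>0\}}=\int_{-\infty}^{\infty}\zeta\,\one\{\zeta>-z\}\,\phi(\zeta)\,d\zeta=\int_{-z}^{\infty}\zeta\,\phi(\zeta)\,d\zeta .
\]

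By Lemma~\ref{lem:2} with $a=-z$ and $b\to\infty$, the last integral equals $\phi(-z)-\lim_{b\to\infty}\phi(b)=\phi(-z)$. Since $\phi$ is symmetric, $\phi(-z)=\phi(z)$, which gives the claim.

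The only point needing care is the improper integral. The upper limit must be handled by a limit $b\to\infty$, using $\phi(b)\to 0$. The integrability of $\zeta\phi(\zeta)$ over the half-line follows from the Gaussian tail, so the expectation is finite and the manipulation is justified.

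As a cross-check, the identity is consistent with the derivative of Lemma~\ref{lem:SAA}. Differentiating $\Psi(z)=\e{(\zeta+z)_+}$ under the expectation gives $\Phi(z)=\e{\one\{\zeta+z>0\}}$, which is correct. A Stein-type identity $\e{\zeta\,u(\zeta)}=\e{u'(\zeta)}$ with $u=\one\{\cdot>-z\}$ then gives $\phi(-z)=\phi(z)$ as well. This is presumably what the paper will use to represent the $n$-derivative $h'(n)A(n)$ of $F_M$ in sample form.
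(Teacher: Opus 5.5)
Your proposal is correct and matches the paper's proof: both rewrite the expectation as $\int_{-z}^{\infty}\zeta\,\phi(\zeta)\,d\zeta$, evaluate it via $\phi'(\zeta)=-\zeta\,\phi(\zeta)$, and conclude with the symmetry $\phi(-z)=\phi(z)$. Your explicit treatment of the improper upper limit ($\phi(b)\to 0$) is a small bit of care the paper leaves implicit. The Stein-type cross-check is a consistency check only and is not needed for the proof.
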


\proof{Proof.} We have 
    %\begin{align*}
        $\e{\zeta \one\{\zeta+z >0\}}=\int_{-z}^{\infty} \zeta \,\phi(\zeta) \,d\zeta=\int_{-z}^{\infty} -\phi'(\zeta) \,d\zeta=\phi(z)$, noting that $\phi'(\zeta) = -\zeta \, \phi(\zeta)$ and symmetry of $\phi(\cdot)$.
         \hfill \Halmos
    %\end{align*} 
    
\endproof

\proof{Proof of Theorem \ref{thm:min_convergence_SAA}.}
Let $\Omega_0$ denote the probability-one event on which all sampled realizations $\{\zeta^t\}_{t=1}^M$ are well-defined and the sample is fixed. We prove the result pathwise on $\Omega_0$.
Fix any $\omega\in\Omega_0$. For this realization, $F_M(\cdot,\cdot;\omega)$ is a deterministic objective on $\Cs{X}\times\Cs{N}$, and Algorithm \ref{alg:bcd} with $P(\cdot;M)=Q(\cdot;M)=F_M$ is exactly the same proximal block-coordinate algorithm as in Theorem \ref{thm:min_convergence}, with $F$ replaced by $F_M$.
By assumption, for this $M$, the  sequence $\{n^k\}$ converges to $n_M^\ast$, $x_M^\ast$ is the stability point under Assumption~\ref{assum:stability}, and $F_M(x_M^\ast,\cdot)$ satisfies a local quadratic growth at $n_M^\ast$ on $\Cs{N}$. 
Therefore, all primitives of Theorem \ref{thm:min_convergence} (applied to the deterministic function $F_M$) hold. Hence, the generated sequence converges to $(x_M^\ast,n_M^\ast)$, and this limit is a local minimum of $F_M$ over $\Cs{X}\times\Cs{N}$.
As the argument holds for $\omega\in\Omega_0$ and $\Pr(\Omega_0)=1$,  the conclusion holds with probability 1.
\hfill \Halmos
\endproof

Keys to the proof of  Theorem~\ref{thm:saa_stationary_consistency} are the uniform convergence of $F_M(x,\cdot)$ to $F(x,\cdot)$ and consistency of subdifferentials, stated next. 

\begin{lemma}[Uniform Convergence]
\label{lem:uniform_convergence}
For every $x \in \Cs{X}$, we have 
%\begin{equation*}
$\max_{n\in\Cs{N}}|F_M(x,n)-F(x,n)|\to 0$ as  $M\to\infty$,  
%\end{equation*}
almost surely (a.s.). 
\end{lemma}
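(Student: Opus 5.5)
For a fixed $x\in\Cs{X}$, the deterministic parts $e+d^\top n+r^\top x$ cancel, so $F_M(x,n)-F(x,n)=f_M(x,n)-f(x,n)=\frac{1}{M}\sum_{t=1}^M \ell(\zeta^t;x,n)-\e{\ell(\zeta;x,n)}$. The statement is therefore a uniform law of large numbers over the compact set $\Cs{N}=[0,b]^{|\Cs{J}|}$. Since $\Cs{X}$ is finite, it suffices to prove the claim for each fixed $x$ and intersect the finitely many probability-one events. The identification $f(x,n)=\e{\ell(\zeta;x,n)}$ comes from Lemma~\ref{lem:SAA}: $h_j\Psi(g_{ij}/h_j)=\e{(\zeta h_j+g_{ij})_+}$ because $h_j>0$.

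I would invoke the standard uniform SLLN (e.g., Shapiro--Dentcheva--Ruszczy\'nski, Theorem~7.48). It requires three primitives: (a) $\Cs{N}$ is compact; (b) for a.e.\ $\zeta$, the map $n\mapsto \ell(\zeta;x,n)$ is continuous on $\Cs{N}$; and (c) $\ell(\cdot;x,n)$ is dominated by an integrable function uniformly in $n$. I will verify each.

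For (b), $h_j(\cdot;\sigma_j)$ is continuous by Assumption~\ref{assum:properties}(i). Composing it with the continuous map $u\mapsto(u)_+$ and the affine map $\zeta h_j+g_{ij}(x)$ gives continuity for every $\zeta$, not just a.e. In fact a Lipschitz bound holds:
\[
|\ell(\zeta;x,n)-\ell(\zeta;x,n')|\le |\zeta|\sum_{j}\sum_i q_{ij}\,|h_j(n_j)-h_j(n_j')|,
\]
and $h_j$ is $C^1$ on the compact $[0,b]$, hence Lipschitz there with constant $L_j=|h_j'(0)|$, since $h_j'$ is increasing and negative. For (c), $0<h_j(n_j)\le h_j(0)$ because $h_j$ is decreasing, so $0\le \ell(\zeta;x,n)\le \sum_{j,i} q_{ij}\big(|\zeta|h_j(0)+|g_{ij}(x)|\big)$. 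The right-hand side is integrable because $\e{|\zeta|}<\infty$.

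As an alternative self-contained route that avoids citing the theorem, I would use a covering argument. Take a finite $\varepsilon$-net $\{n^{(1)},\dots,n^{(m)}\}$ of $\Cs{N}$ and apply the pointwise SLLN at each net point. Then use the Lipschitz bound above, together with $\frac1M\sum_t|\zeta^t|\to\e{|\zeta|}$ a.s., to control the deviation between net points. This gives $\limsup_M\max_n|F_M-F|\le C\varepsilon\,(1+\e{|\zeta|})$ a.s. Letting $\varepsilon\downarrow 0$ along a countable sequence finishes the proof.

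No step is deep. The only point requiring care is establishing the uniform Lipschitz or domination bound in $n$, which relies on boundedness of $h_j$ and $h_j'$ on $[0,b]$ from Assumption~\ref{assum:properties}. The rest is bookkeeping of the probability-one events over the finite set $\Cs{X}$ and the countable family of nets.
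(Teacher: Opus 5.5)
Your proposal is correct, and your self-contained covering route is essentially the paper's proof. The paper also applies the pointwise SLLN on a finite net of $\Cs{N}$, uses a random Lipschitz modulus proportional to $|\zeta|$ (from boundedness of $|h_j'|$ on $[0,b]$) whose sample mean converges by the SLLN, and closes with the same triangle inequality; citing the uniform SLLN with the compactness, continuity, and integrable-envelope conditions you verify is just the packaged form of that argument. Your explicit intersection of probability-one events over a countable sequence $\varepsilon\downarrow 0$ and over the finite set $\Cs{X}$ is slightly more careful than the paper, which fixes a single $\varepsilon$ and does not address how the null set depends on it.
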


\proof{Proof.}
For simplicity, we suppress $x$ and $\sigma$ in our notation. 
It is easy to verify that there exists a continuous (random) $B$ such that $\max_{n\in \Cs{N}}|\ell(\zeta;n)| \le B(\zeta)$ a.s., with $\e{B}<\infty$. Hence, by the strong law of large numbers (SLLN), a.s. 
for every $n \in \Cs{N}$, we have pointwise convergence: $F_M(n) \rightarrow F(n)$ as $M\to\infty$. 
%\begin{equation}
%\label{eq:pointwise_convergence}
%F_M(n) \rightarrow F(n) \qquad \text{as } M\to\infty.
%\end{equation}
%Similarly, 
%\begin{equation}
%\label{eq:convergence_dominance}
%$\frac{1}{M}\sum_{t=1}^M B(\zeta^t) \rightarrow \e{B}$ as $M\to\infty$, a.s. 
%\end{equation}
%Also, $y\mapsto \ell(\xi;y)$ is continuous on the compact interval $\Cs{N}=[0,b]$ for a.e.\ $\xi$.
We now show a random Lipschitz bound for $\ell(\zeta;n)$. For every $\zeta$, $n\in \Cs{N}$, and $s \in \partial_{n_j} \big(\ell_{ij}(\zeta;n_j)\big)_{+}$ for $j \in \Cs{J}$, we have 
%\begin{equation*}
$| s | \le |\zeta| \big| h'_j(n_j) \big| \sum_{i \in \Cs{I}} q_{ij}$. Let $\alpha_j:=\max_{n_j \in [0,b]} |h'_j(n_j)| \times |\Cs{I}|\max_{i \in \Cs{I}} q_{ij}$, where $\alpha_j <\infty$ given that $h'_j(n_j)$ is continuous, and hence bounded, on $[0,b]$. Define $L(\zeta):=|\zeta| \|\alpha\|$. 
By the mean-value theorem,
%\begin{equation}
%\label{eq:rand_lipschitz_ell}
$|\ell(\zeta;n)-\ell(\zeta;n^\prime)|\le L(\zeta)\|n-n^\prime\|$ for all $n,n^\prime \in\Cs{N}$ a.s.
Also, $\e{|\zeta|}=\sqrt{\frac{2}{\pi}}$; hence, $\e{L}<\infty$ and 
%\end{equation}
%Moreover, 
by SLLN, a.s. 
we have $\frac{1}{M}\sum_{t=1}^M L(\zeta^t) \rightarrow \e{L}$ as $M\to\infty$. 
%\begin{equation}
%\label{eq:convergence_rand_lipschitz_ell}
%\frac{1}{M}\sum_{t=1}^M L(\zeta^t) \rightarrow \e{L} \qquad \text{as } M\to\infty.  
%\end{equation}
%Therefore, the family of functions $\{\ell(\xi;\cdot): \xi\}$ is equicontinuous almost surely. 
Fix $\varepsilon>0$. If $\e{L}=0$, then $L(\xi)=0$ a.s. and $\ell(\xi;\cdot)$ is a.s. constant on $\Cs{N}$, so the claim is immediate from the pointwise convergence.  %\eqref{eq:pointwise_convergence}. 
Hence, consider $\e{L}>0$ and choose $\delta =\frac{\varepsilon}{6\e{L}}$. By the compactness of $\Cs{N}$, there exists a collection of points $\{n^1,\dots,n^r\}\subset \Cs{N}$ such that $\Cs{N} \subset \bigcup_{k \in [r]} \Bs{N}_{\delta}(n^k)$.
For each fixed $k\in[r]$, %\eqref{eq:pointwise_convergence} 
the pointwise convergence, gives $F_M(n^k)\to F(n^k)$ a.s. Since $r<\infty$, there exists $M_0:=M_0(\varepsilon)$ such that for all $M\ge M_0$ and all $k\in[r]$, 
%\begin{equation*}
$|F_M(n^k)-F(n^k)| \le \varepsilon/2$.
%\end{equation*}
Moreover, %by %\eqref{eq:convergence_rand_lipschitz_ell}, 
there exists $M_1$ such that for all $M \ge M_1$, we have $\frac{1}{M}\sum_{t=1}^M L(\zeta^t) \le 2 \e{L}$. 
Now take any $n \in \Cs{N}$ and choose $k\in[r]$ such that $n\in \Bs{N}_\delta(n^k)$, i.e., $\|n-n^k\| < \delta$. Then, for all $M \ge \max\{M_0,M_1\}$,
we have 
\begin{align*}
\big|F_M(n)-F(n)\big|
& \le \big|F_M(n)-F_M(n^k)\big| + \big|F_M(n^k)-F(n^k)\big| + \big|F(n^k)-F(n)\big| \\
& \le %\Big(\frac{1}{M}\sum_{t=1}^M L(\xi^t)\Big)\|n - n^k\| + \frac{\varepsilon}{2} + \e{L} \|n - n^k\|
%= 
\frac{\varepsilon}{2}  + \|n - n^k\| \Big( \e{L} + \frac{1}{M}\sum_{t=1}^M L(\xi^t)\Big)\le \frac{\varepsilon}{2} + \delta 3 \e{L}= \varepsilon.  
\hfill \Halmos
\end{align*}
%Thus, $\max_{n\in\Cs{N}}|F_M(n)-F(n)|\to 0$ as $M \to \infty$ a.s.
\endproof

\begin{lemma}[Subdifferential Consistency]
\label{lem:saa_subdiff_consistency}
Consider $x \in \Cs{X}$. 
Let $\{(n_M, v_M)\}_{M \in \Bs{N}}$ be a sequence such that $v_M\in \partial_n\big(F_{M}(x,\cdot)+i_{\Cs{N}}\big)(n_M)$. If $n_M \to \bar n$ and $v_M \to \bar v$ as $M \to \infty$, then $\bar v \in \partial_n\big(F(x,\cdot)+i_{\Cs{N}}\big)(\bar n)$ a.s. 
\end{lemma}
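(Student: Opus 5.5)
The plan is to use the separable structure: since $\Cs{N}=[0,b]^{|\Cs{J}|}$ is a box and both $F_M(x,\cdot)$ and $F(x,\cdot)$ are sums of terms depending on single coordinates $n_j$ plus linear terms, the subdifferential of $F_M(x,\cdot)+i_{\Cs{N}}$ splits coordinatewise. Moreover, for fixed $x$, each $F_M(x,\cdot)$ is convex. Each summand $h_j(n_j)\,q_{ij}(\zeta^t+g_{ij}(x)/h_j(n_j))_+ = q_{ij}(\zeta^t h_j(n_j)+g_{ij}(x))_+$ is the composition of the convex nondecreasing map $(\cdot)_+$ with a function of $n_j$ that is convex when $\zeta^t\ge 0$ and concave when $\zeta^t<0$. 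Because this is not globally convex, I will not rely on convexity. Instead I will use the Clarke/limiting calculus for locally Lipschitz functions: $\partial_n(F_M(x,\cdot)+i_{\Cs{N}})(n)\subseteq \partial_n F_M(x,n)+N_{\Cs{N}}(n)$, and the same inclusion with equality holds for the smooth $F$, since $F(x,\cdot)$ is $C^1$ by Proposition~\ref{prop:convexity_n}.

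The key steps are as follows. (1) Write $v_M=w_M+z_M$ with $w_M\in\partial_n F_M(x,n_M)$ and $z_M\in N_{\Cs{N}}(n_M)$. (2) Bound $w_M$ uniformly. By the random Lipschitz bound from the proof of Lemma~\ref{lem:uniform_convergence}, $\|w_M\|\le \|d\|+\frac1M\sum_t L(\zeta^t)$, which is a.s.\ bounded by SLLN. Hence $z_M=v_M-w_M$ is also bounded, and along a subsequence $w_M\to\bar w$ and $z_M\to\bar z$. Closedness of the normal cone mapping of the box then gives $\bar z\in N_{\Cs{N}}(\bar n)$. (3) Identify $\bar w=\nabla_n F(x,\bar n)$. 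Coordinatewise, an element of $\partial_{n_j}F_M$ has the form
\[
d_j+\frac1M\sum_t\sum_i q_{ij}\,\zeta^t h_j'(n_j)\,\tau_{ij}^t,
\]
with $\tau_{ij}^t\in[0,1]$, and $\tau_{ij}^t=\one\{\zeta^t h_j(n_j)+g_{ij}(x)>0\}$ except on the tie set. By Lemma~\ref{lem:SAA_phi}, the corresponding expectation equals $d_j+h_j'(n_j)\sum_i q_{ij}\phi(g_{ij}(x)/h_j(n_j))=\partial_{n_j}F(x,n)$. Therefore $\bar v=\bar w+\bar z\in \nabla_nF(x,\bar n)+N_{\Cs{N}}(\bar n)=\partial_n(F(x,\cdot)+i_{\Cs{N}})(\bar n)$.

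The main obstacle is step (3): a uniform law of large numbers for the discontinuous indicator averages evaluated at the moving point $n_M$, together with control of the tie-set contributions. For the uniform law, I would use that the class $\{\zeta\mapsto\zeta\one\{\zeta>-g/h\}: h\in[\underline h,\bar h]\}$ consists of monotone threshold functions with an integrable envelope $|\zeta|$. Glivenko--Cantelli (via the empirical cdf and the first empirical moment) then gives a.s.\ uniform convergence over thresholds, and continuity of the limit in the threshold handles $n_M\to\bar n$. For the ties, the set $\{t:\zeta^t=-g_{ij}(x)/h_j(n_{M,j})\}$ contributes at most $\frac1M\sum_t|\zeta^t|\one\{\zeta^t\in[c-\epsilon,c+\epsilon]\}$ in the limit, which tends a.s.\ to a quantity that vanishes as $\epsilon\to0$, since $\zeta$ has no atoms. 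Finally, because every subsequence yields the same conclusion, the full sequence statement holds a.s.
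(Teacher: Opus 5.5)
Your proposal is correct and follows essentially the same route as the paper: split $v_M=w_M+z_M$ via the Lipschitz sum rule, write the components of $w_M$ as the indicator average $d_j+\frac1M\sum_t\sum_i q_{ij}\zeta^t h_j'(n_{M,j})\one\{\zeta^t h_j(n_{M,j})+g_{ij}(x)>0\}$ plus tie terms, pass to the limit with a uniform law of large numbers and Lemma~\ref{lem:SAA_phi}, and finish with closedness of $\Graph N_{\Cs{N}}$ and $\partial_n(F(x,\cdot)+i_{\Cs{N}})(\bar n)=\nabla_nF(x,\bar n)+N_{\Cs{N}}(\bar n)$. The differences are only technical. You get uniformity from Glivenko--Cantelli over thresholds and control ties with a shrinking window. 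The paper instead invokes a uniform SLLN and notes that a.s.\ distinct samples give at most one tie per $(i,j)$, bounded by a constant times $\frac1M\max_t|\zeta^t|\to0$, which is also automatically uniform over a possibly random limit $\bar n$. Your opening remark that $F_M(x,\cdot)$ is convex should simply be deleted, since you yourself observe that it fails.
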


\proof{Proof.}
For simplicity, we suppress $x$ and $\sigma$ in our notation. 
For $i \in \Cs{I}$ and $j \in \Cs{J}$, it is easy to verify that there exists a continuous (random) $B$ such that $\max_{n_j\in  [0,b]} |\ell_{ij}(\zeta;n_j)| \le B(\zeta)$ a.s., with $\e{B}<\infty$.  
For every $\zeta$, define $G_{ij}(\zeta;n_j):=q_{ij} \zeta h'_j(n_j) \one\{\zeta h_j(n_j) + g_{ij}>0\}$, which is bounded above over $n_j \in [0,b]$. Hence, by the uniform SLLN, we have $\max_{n_j \in [0,b]} \big| \frac{1}{M} \sum_{t=1}^M G_{ij}(\zeta^t;n_j) - \e{G_{ij}(\zeta;n_j)}\big| \to 0$ as $M \to \infty$.  We also have $\e{G_{ij}(\zeta;n_j)}=q_{ij}h'_j(n_j)\phi\Big(\frac{g_{ij}(x)}{h_j(n_j)}\Big)$ by an application of Lemma~\ref{lem:SAA_phi}, which combined with 
$\frac{\partial F_j}{\partial n_j}(n_j)= d_j + h_j'(n_j) \sum_{i \in \Cs{I}}  q_{ij} \phi \left(\frac{g_{ij}}{h_j(n_j)}\right)$, lead to $\max_{n_j \in [0,b]} \big| d_j + \frac{1}{M} \sum_{t=1}^M \sum_{i \in \Cs{I} } G_{ij}(\zeta^t;n_j) - \frac{\partial F_j}{\partial n_j}(n_j) \big| \to 0$ as $M \to \infty$.  

Given that $F_M(n)$ is Lipschitz (see the proof of Lemma~\ref{lem:uniform_convergence}), we have $v_M=w_M+\eta_M $, where $w_M \in \partial_n F_{M}(n_M)$ and $\eta_M \in N_{\Cs{N}}(n_M)$. 
For $j \in \Cs{J}$, $w_M$ differs from $d_j + \frac{1}{M} \sum_{t=1}^M \sum_{i \in \Cs{I} } G_{ij}(\zeta^t;n_{M,j})$ only through terms satisfying $\zeta^t h_j(n_{M,j})+g_{ij}=0$. 
Because $\zeta$ has a continuous distribution and $\Cs{I}$ is finite, the number of such terms is uniformly finite a.s., and their contribution is bounded by a constant multiped by $\frac{1}{M} \max_{t=1}^{M} |\zeta^t|$, which goes to $0$ as $M \to \infty$. Thus, $w_M \to \nabla_n F(\bar n)$ as $M \to \infty$. 
Since $v_M \to \bar v$ as $M \to \infty$, we also have $\eta_M \to \bar v-\nabla_nF(\bar n)$ as $M \to \infty$.
Using the closedness property of the limiting subdifferential of $i_{\Cs{N}}$  (cf.\ the closedness of $\Graph\,\ \partial_n i_{\Cs{N}}$, or $\Graph\,\ N_{\Cs{N}}$), we have $\bar v-\nabla_nF(\bar n)\in N_{\Cs{N}}(\bar n)$. 
Because $F$ is continuously differentiable, $\partial_n\big(F(\cdot)+i_{\Cs{N}}\big)(\bar n)
= \nabla_nF(\bar n)+N_{\Cs{N}}(\bar n)$. Therefore, $\bar v\in \partial_n\big(F(\cdot)+i_{\Cs{N}}\big)(\bar n)$. 
\hfill \Halmos 
\endproof

\proof{Proof of Theorem \ref{thm:saa_stationary_consistency}.}
By the finiteness of $\Cs{X}$, Lemma~\ref{lem:uniform_convergence} implies that 
%\begin{equation*}
$\max_{x\in\Cs{X}}\max_{n\in\Cs{N}}|F_M(x,n)-F(x,n)|\to 0$  as $M\to\infty$   
%\end{equation*}
with probability 1. 
Let $(x_M,n_M)\in \Cs{S}_M$. Since $\Cs{X}$ is finite and $\Cs{N}$ is compact, the sequence $\{(x_M,n_M)\}$ admits a subsequence (not relabeled) such that $x_M\equiv \bar x$ and $n_M\to \bar n\in\Cs{N}$.
By definition of $\Cs{S}_M$, we have $0\in \partial_n\big(F_M(\bar x,\cdot)+i_{\Cs{N}}\big)(n_M)$ and $\bar x =T_M(n_M) \in \argmin_{z\in\Cs{X}} F_M(z,n_M)$.
%Since $F_M(\bar x,\cdot)\to F(\bar x,\cdot)$ uniformly and $n_M\to \bar n$, we have $F_M(\bar x,n_M)\to F(\bar x,\bar n)$ by continuity of $F$.
Fix any $z\in\Cs{X}$ and any $\varepsilon>0$. By uniform convergence, there exists $M_0(\varepsilon)$ such that for all $M\ge M_0(\varepsilon)$, 
%and all $n\in\Cs{N}$,
%\begin{equation*}
%$|F_M(\bar x,n)-F(\bar x,n)|\le \varepsilon$ %and 
%\qquad\text{and}\qquad
%$|F_M(z,n)-F(z,n)|\le \varepsilon$.    
%\end{equation*}
%In particular, for all $M\ge M_0(\varepsilon)$, we have
%\begin{equation*}
$F(\bar x,n_M)\le F_M(\bar x,n_M)+\varepsilon \le F_M(z,n_M)+\varepsilon \le F(z,n_M)+2\varepsilon$.   
%\end{equation*}
Letting $M\to\infty$ and then $\varepsilon\downarrow0$, continuity of $F$ yields $F(\bar x,\bar n)\le F(z,\bar n)$ for all $z\in\Cs{X}$. Therefore, $\bar x \in \argmin_{z\in\Cs{X}} F(z,\bar n)$. %, by a similar proof to that of Lemma \ref{lem:stab_x}.  
Since $0\in \partial_n\big(F_M(\bar x,\cdot)+i_{\Cs{N}}\big)(n_M)$ and $n_M \to \bar n$, by Lemma~\ref{lem:SAA_phi}, we have $0\in \partial_n\big(F(\bar x,\cdot)+i_{\Cs{N}}\big)(\bar n)$.
%for all $M$, we have $(n_M,0)\in \Graph\,\partial_n\big(F_M(\bar x,\cdot)+i_{\Cs{N}}\big)$. Moreover, uniform convergence implies $F_M(\bar x,n_M)+i_{\Cs{N}}(n_M)\to F(\bar x,\bar n)+i_{\Cs{N}}(\bar n)$.
%Using the closedness property of the limiting subdifferential (cf.\ the closedness of $\Graph\,\partial f$ stated after the definition of $\partial f$), we conclude $(\bar n,0)\in \Graph\,\partial_n\big(F(\bar x,\cdot)+i_{\Cs{N}}\big)$, i.e., $0\in \partial_n\big(F(\bar x,\bar n)+i_{\Cs{N}}(\bar n)\big)$.
Hence $(\bar x,\bar n)\in \Cs{S}$, proving that every sequence in $\Cs{S}_M$ has subsequences converging to points in $\Cs{S}$.
The previous argument implies 
%\begin{equation*}
$\sup_{(x,n)\in\Cs{S}_M}\dist\big((x,n),\Cs{S}\big)\to 0$ as $M\to\infty$. 
%\end{equation*}
Otherwise, 
%Suppose by contradiction that this were false. Then, 
there exist $\varepsilon>0$ and a subsequence $(x_M,n_M)\in\Cs{S}_M$ such that $\dist\big((x_M,n_M),\Cs{S}\big)\ge \varepsilon$, contradicting the previous subsequential limit argument.  

To show Hausdorff convergence, we further need to prove  
%Since $\Cs{X}$ is finite and $\Cs{N}$ is compact, $\{(x_M,n_M)\}$ has a further convergent subsequence with $x_M\equiv \bar x$ and $n_M\to \bar n\in\Cs{N}$. Using the previous argument, the limit point satisfies $(\bar x,\bar n)\in\Cs{S}$, which implies $\dist((x_M,n_M),\Cs{S})\to 0$ along this subsequence, contradicting $\dist((x_M,n_M),\Cs{S})\ge \varepsilon$.
that 
%\begin{equation*}
$\sup_{(x,n)\in\Cs{S}}\dist\big((x,n),\Cs{S}_M\big)\to 0 $ as  $M\to\infty$. 
%\end{equation*}
Fix any $(x^\ast,n^\ast)\in\Cs{S}$. By the quadratic growth at $n^\ast$ for $F(x^\ast,\cdot)$ (see Proof of Theorem \ref{thm:min_convergence}) and uniqueness of the minimizer $x^\ast$ of $F(\cdot,n^\ast)$, there exist $\delta>0$, $c>0$, and $\gamma >0$ such that
$F(x^\ast,n) \ge F(x^\ast,n^\ast) + \frac{c}{2}\|n-n^\ast\|^2$ for all $n\in \Cs{N}\cap \Bs{N}_\delta(n^\ast)$ and
$F(z,n^\ast)\ge F(x^\ast,n^\ast)+\gamma$ for all $z\in\Cs{X}\setminus\{x^\ast\}$. 
Choose $\eta\in(0,\delta)$. Since $F_M\to F$ uniformly on $\Cs{X}\times \Cs{N}$, there exists $M_0$ such that for all $M\ge M_0$ and all $(x,n)\in\Cs{X}\times\Cs{N}$,
$|F_M(x,n)-F(x,n)|\le \frac{c}{8}\eta^2$.    
Then, for any $n\in \Cs{N}\cap \Bs{N}_\delta(n^\ast)$ with $\|n-n^\ast\|\ge \eta$,
\begin{align*}
F_M(x^\ast,n)
&\ge F(x^\ast,n)-\frac{c}{8}\eta ^2
\ge F(x^\ast,n^\ast)+\frac{c}{2}\|n-n^\ast\|^2-\frac{c}{8}\eta^2 
\ge F(x^\ast,n^\ast)+\frac{c}{2}\eta^2-\frac{c}{8}\eta^2\\
&= F(x^\ast,n^\ast)+\frac{3c}{8}\eta^2 \ge F_M(x^\ast,n^\ast)+\frac{3c}{8}\eta^2-\frac{c}{8}\eta^2
=F_M(x^\ast,n^\ast)+\frac{c}{4}\eta^2.
\end{align*}
Hence, $F_M(x^\ast,n)\ge F_M(x^\ast,n^\ast)+\frac{c}{4}\eta^2$ when $\|n-n^\ast\|\ge \eta$ and $M\ge M_0$.
Now consider the restriction of $F_M(x^\ast,\cdot)$ to the compact set $\Cs{N}\cap \Bs{N}_\delta(n^\ast)$. Since $n^\ast\in \Cs{N}\cap \Bs{N}_\delta(n^\ast)$, the minimum
$\tilde n_M\in \argmin_{n\in \Cs{N}\cap \Bs{N}_\delta(n^\ast)} F_M(x^\ast,n) $   
exists. By construction, $\|\tilde n_M-n^\ast\|\le \eta$. Moreover, $\tilde n_M$ is a (constrained) local minimizer of $F_M(x^\ast,\cdot)$ on $\Cs{N}$, and hence, 
$0\in \partial_n\big(F_M(x^\ast,\cdot)+i_{\Cs{N}}\big)(\tilde n_M)$, where this stationarity follows since $\tilde n_M$ minimizes $F_M(x^\ast,\cdot)$ over the neighborhood $\Cs{N}\cap \Bs{N}_\delta(n^\ast)$. %Pick $\tilde x_M = T_M(\tilde{n}_M) \in \argmin_{z\in\Cs{X}} F_M(z,\tilde n_M)$, given that $\argmin_{z\in\Cs{X}} F_M(z,\tilde n_M)$ is nonempty by the finiteness of $\Cs{X}$. 
%Then, $(\tilde x_M,\tilde n_M)\in \Cs{S}_M$ by definition. 
%We claim that every cluster point of any sequence $\{\tilde x_M\}_{M\ge M_0}$ equals $x^\ast= T(n^\ast) \in \argmin_{z\in\Cs{X}}F(z,n^\ast)$. 
%On a 
%Indeed, there exists a 
%further subsequence $M_k$, we have 
%such that $\tilde x_{M_k}=\bar x$ by the finiteness of $\Cs{X}$. By the uniform convergence $F_M\to F$ on $\Cs{X}\times\Cs{N}$ and the fact that 
%$\tilde n_{M_k}\to n^\ast$ as $\eta>0$ is arbitrary. 
By the uniform convergence $F_M\to F$ on $\Cs{X}\times\Cs{N}$ and the fact that $F(z,n^\ast)\ge F(x^\ast,n^\ast)+\gamma$ for all $z\in\Cs{X}\setminus\{x^\ast\}$,  we have 
%\begin{equation*}
%$F_{M_k}(x^\ast,\tilde n_{M_k})\to F(x^\ast,n^\ast)$. 
%\end{equation*}
$x^\ast \in \argmin_{z\in\Cs{X}}F_{M}(z,\tilde n_{M})$. Thus, $(x^\ast,\tilde n_M)\in \Cs{S}_M$, and since $\eta>0$ is arbitrary, $\dist((x^\ast,n^\ast),\Cs{S}_M)\to0$. 
A compactness argument (taking the supremum) over $\Cs{S}$ then yields
%ifor every $z\in\Cs{X}$ we have
%\begin{equation*}
%$F_{M_k}(\tilde x_{M_k},\tilde n_{M_k})\le F_{M_k}(z,\tilde n_{M_k})$.
%\end{equation*}
%Passing to the limit along $k$ yields $F(\bar x,n^\ast)\le F(z,n^\ast)$ for all $z\in\Cs{X}$. Therefore, $\bar x= T(n^\ast) \in \argmin_{z\in\Cs{X}} F(z,n^\ast)$, by a similar proof to that of Lemma \ref{lem:stab_x}. In other words, $\bar{x}=x^\ast$. Therefore, we obtain $\dist((x^\ast,n^\ast),\Cs{S}_M)\to 0$.
$\sup_{(x,n)\in\Cs{S}}\dist((x,n),\Cs{S}_M)\to 0$.
Combining both one-sided limits gives $d_H(\Cs{S}_M,\Cs{S})\to 0$ as $M\to\infty$.
Finally, under the quadratic-growth assumptions of the theorem, stationary points coincide with local minimizers by an argument analogous to that of Theorem \ref{thm:min_convergence}. 
%Therefore, the same Hausdorff convergence conclusion holds. % for the corresponding local-minimizer sets.
\hfill \Halmos
\endproof

\subsection{Convergence Proofs of the PWLA}
% \proof{Proof of Theorem \ref{thm:min_convergence_pwl}}
% By Lemma~\ref{lem:stab_x}, there exists $K_0$ such that $x_M^{k}=x_M^\ast$ for all $k\ge K_0$, where
% \begin{equation*}
% x_M^\ast = T_M(n_M^k) \in \argmin_{x \in \Cs{X}} S_M(x,n_M^{k}),    
% \end{equation*}
% with $P(\cdot;M)=S_M$ in the $x$-step.
% Since $Q(\cdot;M)=F$ in the $y$-step, Lemma~\ref{lem:convergence} yields that $\{n_M^k\}$ converges to some $n_M^\ast\in \Cs{N}$ and satisfies the stationarity condition $0 \in \partial G(n^\ast)$, 
% where $G(n):=F(x^\ast,n)+i_{\Cs{N}}(n)$. That is, $n_M^\ast$ is a critical point of $F(x_M^\ast,\cdot)$ over $\Cs{N}$. The remainder of the argument follows the proof of Theorem \ref{thm:min_convergence} verbatim, yielding that $(x_M^\ast,n_M^\ast)$ is a local minimum of $F$ over $\Cs{X}\times\Cs{N}$.
% \hfill \Halmos
% \endproof

A key to the proof of  Theorem~\ref{thm:pwl_stationary_consistency} is the uniform convergence of $S_M$, as defined in \eqref{eq: pwl}, to $F$, which will be stated next.

\begin{lemma}[Uniform Convergence]
\label{lem:uniform_convergence_PWL}
Suppose the breakpoints used to construct $\Psi_M$ cover an interval
containing
$\Cs{Z}:=\left\{\frac{g_{ij}(x)}{h_j(n_j;\sigma_j)}:
x\in\Cs{X},\ n\in\Cs{N}, \; i\in\Cs{I}, \; j\in\Cs{J}\right\}$,
and let $\kappa_M:=\max_{t\in\{0,\dots,M-1\}}(z^{t+1}-z^t)$, with $\kappa_M\to0$ as $M \to \infty$. 
Then, $\max_{n \in \Cs{N}} \max_{x\in\Cs{X}}|S_M(x,n)-F(x,n)| \to 0$ as $M\to\infty$.   
%\end{equation*}
\end{lemma}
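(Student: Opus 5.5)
Since $S_M$ and $F$ differ only in replacing $\Psi$ by $\Psi_M$, the plan is to reduce the claim to a uniform interpolation error bound for $\Psi$ on $[\underline u,\overline u]$ and then multiply by bounded coefficients. For every $(x,n)\in\Cs{X}\times\Cs{N}$ the linear terms $e+d^\top n+r^\top x$ cancel, so
\begin{equation*}
|S_M(x,n)-F(x,n)| \le \sum_{j\in\Cs{J}} h_j(n_j;\sigma_j)\sum_{i\in\Cs{I}} q_{ij}\,\Big|\Psi_M\Big(\tfrac{g_{ij}(x)}{h_j(n_j;\sigma_j)}\Big)-\Psi\Big(\tfrac{g_{ij}(x)}{h_j(n_j;\sigma_j)}\Big)\Big|.
\end{equation*}
By the covering hypothesis, every argument $g_{ij}(x)/h_j(n_j;\sigma_j)$ lies in $\Cs{Z}\subseteq[\underline u,\overline u]$. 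It therefore suffices to bound $\sup_{z\in[\underline u,\overline u]}|\Psi_M(z)-\Psi(z)|$ and to show the prefactors are uniformly bounded.

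For the prefactors, $h_j(\cdot;\sigma_j)$ is continuous and positive on the compact interval $[0,b]$ by Assumption~\ref{assum:properties}; being decreasing, it is bounded above by $H_j:=h_j(0;\sigma_j)<\infty$. With finitely many $i,j$, the constant $C:=\sum_{j}H_j\sum_i q_{ij}$ is finite and independent of $(x,n)$ and $M$. Hence $\max_{x,n}|S_M-F|\le C\sup_{z\in[\underline u,\overline u]}|\Psi_M(z)-\Psi(z)|$.

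The key step is the interpolation error estimate. Here $\Psi_M$ is the piecewise-linear interpolant of $\Psi$ at the breakpoints $z^0<\dots<z^M$. $\Psi$ is $C^2$ with $\Psi'=\Phi$ and $\Psi''=\phi\le 1/\sqrt{2\pi}$, and it is convex, which is why $S_M$ overestimates $F$. The standard linear interpolation bound on each subinterval $[z^t,z^{t+1}]$ then gives
\begin{equation*}
0\le\Psi_M(z)-\Psi(z)\le \tfrac18 (z^{t+1}-z^t)^2\max|\Psi''| \le \tfrac{\kappa_M^2}{8\sqrt{2\pi}}.
\end{equation*}
If one prefers not to use second derivatives, the Lipschitz bound $|\Psi'|=\Phi\le1$ already yields an error of at most $\kappa_M$, which suffices. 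Either way the bound tends to $0$ because $\kappa_M\to0$. Combining with the prefactor bound gives $\max_{n}\max_{x}|S_M(x,n)-F(x,n)|\le C\kappa_M^2/(8\sqrt{2\pi})\to0$.

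The only delicate point is confirming that all arguments stay inside the breakpoint range, where the interpolant is defined and the error bound applies. This is exactly what the covering hypothesis supplies. $\Cs{Z}$ is bounded because $\Cs{X}$ is finite, $g_{ij}$ is linear, and $h_j\ge\min_{[0,b]}h_j>0$, so such a finite range $[\underline u,\overline u]$ exists. No probabilistic argument is needed.
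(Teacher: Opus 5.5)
Your proof is correct and matches the paper's argument. In both, the linear terms cancel, the prefactors $\sum_{i,j} q_{ij}h_j(n_j;\sigma_j)$ are bounded by a constant $C$, and the interpolation error of $\Psi$ is bounded uniformly on the breakpoint range; your Lipschitz fallback ($\Psi'=\Phi\le 1$, so the error is at most $\kappa_M$) is exactly the paper's bound $C\kappa_M$. Your primary estimate via $\Psi''=\phi\le 1/\sqrt{2\pi}$ is only a small refinement: it gives the sharper rate $C\kappa_M^2/(8\sqrt{2\pi})$ and the one-sided sign $S_M\ge F$.
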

\proof{Proof.}
For simplicity, we suppress $\sigma$ in our notation. 
Since $\Cs{X}$ is finite, $\Cs{N}$ is compact, and $h_j(\cdot)$
is positive, continuous, and bounded on $[0,b]$, $\Cs{Z}$ is bounded.
Moreover, there exists a finite constant $C:=\max_{n \in \Cs{N}} \sum_{j \in \Cs{J}}\sum_{i \in \Cs{I}} q_{ij} h_j(n_j)<\infty $
Since $\Psi'(z)=\Phi(z)\in[0,1]$, $\Psi$ is globally Lipschitz with constant $1$ on $\Bs{R}$. Hence, on each interval $[z^t,z^{t+1}]$, $0 \le t < M$, the linear interpolation error is bounded by the oscillation of $\Psi$ on that interval, so
%\begin{equation*}
$\sup_{z\in \Cs{Z}}|\Psi_M(z)-\Psi(z)|\le \max_{t \in \{0,1\ldots,M-1\}}  \sup_{u,v\in[z^t,z^{t+1}]}|\Psi(u)-\Psi(v)|
\le \max_{t \in \{0,1\ldots,M-1\}} (z^{t+1}-z^t)=\kappa_M$.     
%\end{equation*}
%Therefore, $\sup_{z\in\Bs{R}}|\Psi_M(z)-\Psi(z)|\to 0$ as  $M \to \infty$.
Consequently, for all $x\in\Cs{X}$ and $n \in \Cs{N}$,
%\begin{equation*}
$|S_M(x,n)-F(x,n)|
\le \sum_{j \in \Cs{J}}\sum_{i \in \Cs{I}} h_j(n_j) q_{ij}
\sup_{z\in\Cs{Z}}|\Psi_M(z)-\Psi(z)| \le C \kappa_M$.    
Taking the maximum over $\Cs{X}\times\Cs{N}$ and letting $M\to\infty$ 
prove the claim. 
\hfill \Halmos
\endproof
\proof{Proof of Theorem \ref{thm:pwl_stationary_consistency}.}
%implies that for each fixed $n\in\Cs{N}$, 
%\begin{equation*}
%$\max_{x\in\Cs{X}}|S_M(x,n)-F(x,n)|\to 0$ as $M\to\infty$.
%\end{equation*}
Let $(x_M,n_M)\in \Cs{S}_M$. 
Because $\Cs{X}$ is finite and $\Cs{N}$ is compact, there exists a subsequence (not relabeled) such that
$x_M\equiv \bar x$ and $n_M\to \bar n\in\Cs{N}$. By definition of $\Cs{S}_M$, we have $0\in \partial_n\big(F(\bar x,\cdot)+i_{\Cs{N}}\big)(n_M)$ and $\bar x \in \argmin_{z\in\Cs{X}} S_M(z,n_M)$. By the uniform convergence $S_M \to F$ (Lemma~\ref{lem:uniform_convergence_PWL}) and an argument similar to that in the first part of the proof of Theorem~\ref{thm:saa_stationary_consistency}, we have 
% By the pointwise convergence of $S_M(\cdot,n_M)$ to $F(\cdot,n_M)$ over finite $\Cs{X}$, there exists $M_0(\varepsilon)$ such that for all $M\ge M_0(\varepsilon)$, 
% $|S_M(\bar x,n_M)-F(\bar x,n_M)|\le \varepsilon$ and $|S_M(z,n_M)-F(z,n_M)|\le \varepsilon$.    
% Since $\bar x\in\argmin_{u\in\Cs{X}}S_M(u,n_M)$, we have 
% $F(\bar x,n_M)
% \le S_M(\bar x,n_M)+\varepsilon
% \le S_M(z,n_M)+\varepsilon
% \le F(z,n_M)+2\varepsilon$.    
% Letting $M\to\infty$ and then $\varepsilon\downarrow0$, continuity of $F$ yields $F(\bar x,\bar n)\le F(z,\bar n)$ for all $z\in\Cs{X}$. Therefore, 
$\bar x \in \argmin_{z\in\Cs{X}} F(z,\bar n)$. %, by a similar proof to that of Lemma \ref{lem:stab_x}.
%We now show that 
%\begin{equation*}
%$0\in \partial_n\big(F(\bar x,\cdot)+i_{\Cs{N}}\big)(\bar n)$.
%\end{equation*}
Since $0\in \partial_n\big(F(\bar x,\cdot)+i_{\Cs{N}}\big)(n_M)$, 
%we have $(n_M,0)\in \Graph\,\partial_n\big(F(\bar x,\cdot)+i_{\Cs{N}}\big)$.
by the closedness property of the limiting subdifferential, 
%(cf.\ the closedness of $\Graph\,\partial f$ stated after the definition of $\partial f$), 
we have 
%$(\bar n,0)\in \Graph\,\partial_n\big(F(\bar x,\cdot)+i_{\Cs{N}}\big)$, i.e., 
$0\in \partial_n\big(F(\bar x,\cdot)+i_{\Cs{N}}\big)(\bar n)$.
Hence $(\bar x,\bar n)\in \Cs{S}$, proving that every sequence in $\Cs{S}_M$ has subsequences converging to points in $\Cs{S}$, which also implies $\sup_{(x,n)\in\Cs{S}_M}\dist\big((x,n),\Cs{S}\big)\to 0$ as $M\to\infty$. 

To show Hausdorff convergence, we further need to prove that  
$\sup_{(x,n)\in\Cs{S}}\dist\big((x,n),\Cs{S}_M\big)\to 0 $ as $M\to\infty$. 
Fix any $(x^\ast,n^\ast)\in\Cs{S}$; so,  
$0\in \partial_n\big(F(x^\ast,\cdot)+i_{\Cs{N}}\big)(n^\ast)$.
By the uniform convergence $S_M\to F$ on $\Cs{X}\times\Cs{N}$ and the fact that $F(z,n^\ast)\ge F(x^\ast,n^\ast)+\gamma$ for all $z\in\Cs{X}\setminus\{x^\ast\}$,  we have $x^\ast  \in \argmin_{z\in\Cs{X}}S_{M}(z,n^\ast)$. Thus, $(x^\ast,n^\ast)\in \Cs{S}_M$ for all sufficiently large $M$.  
Thus, $\Cs{S}\subseteq \Cs{S}_M$ for all sufficiently large $M$. 
% Define $\tilde x_M:=T_M(n^\ast)\in\argmin_{z\in\Cs{X}}S_M(z,n^\ast)$. Then, by the definition of $\Cs{S}_M$, we have $(\tilde x_M,n^\ast)\in\Cs{S}_M$ for all $M$.
% It remains to show that $\tilde x_M\to x^\ast$ (along subsequences, which is sufficient since $\Cs{X}$ is finite). Let $\bar x$ be any cluster point of $\{\tilde x_M\}$. Then, there exists a subsequence $M_k$ such that $\tilde x_{M_k}=\bar x$ for all $k$ by the finiteness of $\Cs{X}$. For any $z\in\Cs{X}$, optimality of $\tilde x_{M_k}$ gives
% $S_{M_k}(\bar x,n^\ast)\le S_{M_k}(z,n^\ast)$.    
% Using uniform convergence $S_M\to F$ on $\Cs{X}\times\Cs{N}$ and passing to the limit along $k$, we obtain
% $F(\bar x,n^\ast)\le F(z,n^\ast)$ for all $z\in\Cs{X}$. Therefore, $\bar x= T(n^\ast) \in \argmin_{z\in\Cs{X}} F(z,n^\ast)$, by a similar proof to that of Lemma \ref{lem:stab_x}. In other words, $\bar{x}=x^\ast$. 
% Therefore, every cluster point equals $x^\ast$, so $\tilde x_M\to x^\ast$. Consequently, $\dist\big((x^\ast,n^\ast),\Cs{S}_M\big)\le \|\tilde x_M-x^\ast\|\to 0$.
Taking the supremum over $(x^\ast,n^\ast)\in\Cs{S}$ yields $\sup_{(x,n)\in\Cs{S}}\dist((x,n),\Cs{S}_M)\to 0$.
Combining both one-sided limits gives $d_H(\Cs{S}_M,\Cs{S})\to 0$ as $M\to\infty$.
\hfill \Halmos
\endproof

\end{document}